\theoremstyle{definition}
\DeclareMathOperator{\pbw}{pbw}
\DeclareMathOperator{\MC}{MC}
\newcommand{\ci}{C^\infty}
\newcommand{\ff}{\mathfrak}
\newcommand{\Cc}[1]{\mathcal{#1}}
\newcommand{\p}{\partial}
\renewcommand{\L}{L_\infty}
\newcommand{\F}[2]{\frac{#1}{#2}}
\newcommand{\Ber}{\mathrm{Dens}}
\newcommand{\Berr}{\mathrm{Ber}}
\renewcommand{\ker}{\mathrm{ker}}
\newcommand{\im}{\mathrm{im}}
\newcommand{\sden}{\sqrt{\Cc{D}x}}
\newcommand{\bs}{\textbf{s}}
\newcommand{\bt}{\textbf{t}}
\newcommand{\bX}{\textbf{X}}
\newcommand{\bY}{\textbf{Y}}
\newcommand{\bZ}{\textbf{Z}}
\newcommand{\bS}{\textbf{S}}
\newcommand{\brho}{\boldsymbol\rho}
\newcommand{\BV}{\mathrm{BV}}
\newcommand{\DO}{\mathrm{DO}}
\newcommand{\B}{\mathrm{Dens}^{1/2}_{\Cc{M}}}
\newcommand{\dDO}{\DO(\B)}
\newcommand{\Ba}[1]{\mathrm{Dens}^{1/2}_{#1}}
\newcommand{\Gr}{\mathrm{Gr}}
\newcommand{\Sh}{\mathrm{Sh}}
\newcommand{\Op}{\mathrm{Op}}
\newcommand{\allbracket}{\big\{}
\newcommand{\arrbracket}{\big\}}
\newcommand{\ZZ}{\mathbb{Z}}
\newcommand{\zz}{\mathbb{Z}}
\newcommand{\NN}{\mathbb{N}}
\newcommand{\RR}{\mathbb{R}}
\newcommand{\hbarr}{\llbracket\hbar\rrbracket}
\newcommand{\frakg}{{\mathfrak g}}
\newcommand{\cM}{\mathcal{M}}
\newcommand{\cA}{\mathcal{A}}
\newcommand{\cF}{\mathcal{F}}
\newcommand{\cinf}[1]{C^{\infty}(#1)}
\newcommand{\sections}[1]{\Gamma\big(#1\big)}
\newcommand{\aV}{A}
\newcommand{\XX}{\mathfrak{X}}
\newcommand{\half}{\frac{1}{2}}
\newcommand{\ih}{\frac{1}{\hbar}}
\newcommand{\hbarQ}{ \hbar \Cc{L}_Q}
\newcommand{\xto}[1]{\stackrel{{#1}}{\longrightarrow}}
\newcommand{\hdo}[2]{\frac{\hbar\DO^+_{#1} (\B)}{\hbar\DO^+_{#2} (\B)}}
\newcommand{\prr}{\phi}
\newcommand{\aDelta}{X}
\newcommand{\formal}{ formal power }
\title{Quantization of  (-1)-Shifted Derived Poisson Manifolds}
\author{Kai ~Behrend,  ~Matt ~Peddie and ~Ping ~Xu\\
\\
Dedicated to Jean-Luc Brylinski on his 70th birthday}
\newcommand{\Addresses}{{
  \bigskip
  \footnotesize

  Kai Behrend, \textsc{Department of Mathematics,  University of British Columbia}\par\nopagebreak
  \textit{E-mail address}: \texttt{behrend@math.ubc.ca}

  \medskip

 Matt Peddie, \textsc{Department of Mathematics, Pennsylvania State University}\par\nopagebreak
  \textit{E-mail address}: \texttt{matt.peddie11@gmail.com}

  \medskip

  Ping Xu, \textsc{Department of Mathematics, Pennsylvania State University}\par\nopagebreak
  \textit{E-mail address}: \texttt{ping@math.psu.edu}

}}
\begin{document}

\begin{abstract}
We investigate the quantization problem of $(-1)$-shifted derived Poisson manifolds
in terms of $\BV_\infty$-operators on the space of Berezinian half-densities.
We prove that quantizing  such a $(-1)$-shifted derived Poisson manifold is
equivalent to the lifting of a  consecutive sequence 
of Maurer-Cartan elements, each obtained from a short exact sequence of differential
graded Lie algebras. At each step,  the obstruction
 is  a certain class in the second Poisson cohomology. Consequently,
a $(-1)$-shifted derived Poisson manifold is quantizable if the
second Poisson cohomology group vanishes.
We also prove that for any  $\L$-algebroid $\Cc{\aV}$,
its corresponding  linear $(-1)$-shifted   derived  Poisson manifold
$\Cc{\aV}^\vee[-1]$  admits a canonical quantization.
Finally, given a Lie algebroid $A$ and a   one-cocycle  $s\in \sections{A^\vee}$,
the $(-1)$-shifted derived Poisson manifold corresponding to
the derived intersection of coisotropic
submanifolds determined by the graph of $s$ and the zero section of
the Lie-Poisson $A^\vee$ is shown to
admit a canonical quantization in terms of Evens-Lu-Weinstein module. 
\end{abstract}

\maketitle

\section*{Introduction}

The notion of homotopy Schouten algebras was introduced by
Khudaverdian-Voronov in their seminar paper  \cite{MR2757715} 
in 2008. In  particular,  they discovered  a homotopy
analogue of  Koszul-Brylinski construction
 \cite{MR950556, MR837203}
 that the space of differential forms on a $P_\infty$-manifold
 \cite{MR2304327, MR2180451}
admits a canonical homotopy Schouten algebra structure.
Khudaverdian-Voronov's construction is mainly in the context
of supergeometry, i.e. $\ZZ_2$-grading.
In the context of $\ZZ$-grading, following 
Pridham~\cite{MR3653066,arXiv:1804.07622},
 the underlying geometric object in the  $C^\infty$-context
 is called
  $(-1)$-shifted derived Poisson manifolds \cite{MR4091493}.

A  $(-1)$-shifted derived Poisson manifold is a $\mathbb{Z}$-graded manifold
 $\Cc{M}$ whose algebra of functions $\ci(\Cc{M})$ is a
homotopy Schouten algebra, called   $(+1)$-shifted derived Poisson algebra
in the paper. Equivalently, a  $(-1)$-shifted derived Poisson
 manifold $\Cc{M}$ is a   differential graded (dg) manifold $(\Cc{M},Q)$ 
equipped with a degree $(+1)$ \formal  series $\Pi\in\Gamma(\hat{S}T_{\Cc{M}})$,
 where $\Pi = \sum_{n\geq 2}\Pi_n$ with $\Pi_n \in
\Gamma({S}^n T_{\Cc{M}})$,  satisfies the Maurer-Cartan equation
  \begin{equation}
        \allbracket Q,\Pi\arrbracket + \F{1}{2}\allbracket\Pi,\Pi\arrbracket = 0.
        \end{equation}
Here the bracket $ \allbracket- , -\arrbracket$
refers to the standard Poisson bracket on $\Gamma(\hat{S}T_{\Cc{M}})
\cong C^\infty (T^\vee_\cM)$ corresponding to
the canonical symplectic structure on $T^\vee_\cM$. 
We  often  write $\Pi_1 =Q$, and call $Q+\Pi$  the Poisson tensor.
A  $(-1)$-shifted derived Poisson manifold is usually denoted
$(\cM, Q, \Pi)$.

This  paper is devoted  to  the study of   quantizations  of 
$(-1)$-shifted derived Poisson manifolds.    Following  Kravchenko
 \cite{MR1764440}, a quantization of a $(-1)$-shifted derived Poisson manifold
$(\cM, Q, \Pi)$ is a square-zero  differential operator
of degree $(+1)$,  defining 
a   Batalin-Vilkovisky algebra up to homotopy,
 whose associated
$(+1)$-shifted derived Poisson algebra is the  one on $C^\infty (\cM)$
corresponding to $(\cM, Q, \Pi)$.
For quantizations being  intrinsic, instead of differential operators
on $C^\infty (\cM)$, we consider differential operators
on  $\Gamma(\B)$,  the space of Berezinian half-densities on $\Cc{M}$.
More precisely, a  quantization of a $(-1)$-shifted derived Poisson manifold
 $(\Cc{M},Q,\Pi)$ is a $\BV_\infty$-operator $\Delta \in\hbar\DO^+ (\B) $,
i.e. a  square-zero  self-adjoint $\hbar$-enhanced differential operator,
whose     extended principal symbol, evaluated at
$\hbar=1$,  equals to $Q + \Pi$. See Definition
\ref{Section 4 Defn Quantisation of Poisson structure}.
In \cite{MR3685170}, Khudaverdian-Peddie constructed 
an example of non-quantizable $(-1)$-shifted derived Poisson manifold,
which contains only one term $\Pi_2\in 
\Gamma({S}^2 T_{\Cc{M}})$ with $Q$ being zero, i.e. $(-1)$-shifted
Poisson manifold. See Example \ref{ex:Hovik}.

Our first main result is the following   theorem describing the 
obstruction class to the quantizations.

 \begin{introthm}[Theorem~\ref{thm:main}]
Let  $(\Cc{M},Q,\Pi)$ be a $(-1)$-shifted derived Poisson manifold.
Assume that the second Poisson cohomology 
group $\Cc{H}^2 (\Cc{M}, Q+\Pi)$ vanishes, then $(\Cc{M},Q,\Pi)$
is quantizable.
\end{introthm}

Our strategy of proof is to convert the quantization problem
into the problem of lifting Maurer-Cartan elements of a  short
exact sequence of differential graded Lie algebras (dglas).
For a given dg manifold $(\Cc{M},Q)$, a $(-1)$-shifted
 derived Poisson
structure $\Pi$ is equivalent  to a Maurer-Cartan element
of the dgla
$\big(\Gamma(\hat{S}T_\Cc{M}), \quad \{\cdot, \cdot\},  \quad\{Q, \cdot\}\big)$, while 
 a $\BV_\infty$-operator
 $\Delta\in\hbar\DO^+ (\B)$, or more precisely
 $\Delta-\hbarQ$,
 corresponds to   a Maurer-Cartan element
of the  dgla
$\big( \hbar\DO^+ (\B) ,  \quad [\cdot, \cdot]_\hbar,  \quad[\hbarQ, \cdot]_\hbar\big)$.
These two dglas are related by
the  short exact sequence
\begin{equation}
\label{eq:dgla-exact1}
0\xto{}\hbar\DO^+_2 (\B)\xto{i}\hbar\DO^+ (\B)\xto{\phi}\Gamma(\hat{S}T_\Cc{M})\xto{}0,
\end{equation}
where $\hbar\DO^+_2 (\B)$ is considered as a dg Lie subalgebra
of $\hbar\DO^+ (\B)$, the morphism  $i$ is the inclusion map and
$\phi$ is the morphism defined by
the  extended principal symbol, evaluated at
$\hbar=1$.
%
%
Here,  for any integer $t\geq 0$, $\hbar\DO^+_t (\B)$, denotes the
space of those $\hbar$-enhanced differential operators $\Delta$
such that $\F{1}{\hbar^t} \Delta \in \hbar\DO^+ (\B)$.  
Therefore, the lifting of a Maurer-Cartan element
in the short exact sequence \eqref{eq:dgla-exact1}
is equivalent to the lifting of a  consecutive sequences
of Maurer-Cartan elements, each obtained from
  the  short exact sequences of dglas:
\begin{equation}
\label{eq:dgla-exact-k1}
0\xto{}\hdo{2k}{2k+2} \xto{i_k} \hdo{}{2k+2} \xto{\prr_k} \hdo{}{2k}\xto{}0,
\end{equation}
 $k=1, 2, \cdots$.
Here $i_k$ is the natural inclusion and
  $\prr_k$ is the natural projection.
These are indeed square-zero extensions of dglas. The obstruction
to lifting a Maurer-Cartan element is  proved to be a certain
class in the second Poisson cohomology  
$\Cc{H}^2 (\Cc{M}, Q+\Pi)$.

Theorem {\bf A} describes the obstruction to
the existence of quantizations, 
 which is a sufficient condition for
quantizing $(-1)$-shifted derived Poisson manifolds.
In many situation, however,  although the second
Poisson cohomology group $\Cc{H}^2 (\Cc{M}, Q+ \Pi)$
 does not vanish,
 $(\Cc{M},Q,\Pi)$ may still be quantizable.

An important  class of $(-1)$-shifted derived Poisson manifolds
arises as \emph{linear} Poisson structures on a vector bundle, which
corresponds to $\L$-algebroids, similar
 to the ordinary Lie-Poisson construction \cite{MR1269545, MR2178041}.
In other words,  $\L$-algebroid structures
 on  a  $\ZZ$-graded vector bundle $\Cc{\aV}$ are
in one-one correspondence  to  linear
 $(-1)$-shifted derived Poisson manifold structures on $\Cc{\aV}^\vee[-1]$
\cite{MR4091493}. 
Our second main theorem is  the following

\begin{introthm}[Theorem~\ref{thm:main2}]
For any  $\L$-algebroid $\Cc{\aV}$,
its corresponding  linear $(-1)$-shifted   derived  Poisson manifold
$\Cc{\aV}^\vee[-1]$  admits a canonical quantization.
\end{introthm}

Our  construction of the $\BV_\infty$-operator utilizes
the fiberwise $\hbar$-Fourier transform  \cite{MR4235776}
 by Voronov-Zorich~\cite{MR947602}:
\begin{equation}
\label{eq:Fourier0}
\cF: \sections{\Ba{\Cc{\aV}[1]}}\xto{\simeq} \sections{ \Ba{\Cc{\aV}^\vee[-1]}}.
\end{equation}
To any  $\L$-algebroid $\Cc{\aV}$, there  is
a Chevalley--Eilenberg differential $D$,
which is a homological vector field on $\Cc{\aV}[1]$.
Therefore, $(\Cc{\aV}[1], D)$ is  a dg manifold.
We prove that $\Delta:= \cF \circ \Cc{L}_{\hbar D} \circ \cF^{-1}
\in \hbar\DO^+( {\mathrm{Dens}^{1/2}_{ \Cc{\aV}^\vee[-1] }})$ 
is a $\BV_\infty$-operator quantizing the linear
$(-1)$-shifted   derived  Poisson manifold on $\Cc{\aV}^\vee[-1]$.
If $\cM$ is a  $P_\infty$-manifold
 \cite{MR2304327, MR2180451}, its cotangent bundle 
$T^\vee_\cM$  naturally carries  an $\L$-algebroid structure \cite{MR2757715},
 Thus $T_\cM [-1]$ is
a linear $(-1)$-shifted   derived Lie-Poisson manifold, whose algebra of functions
are differential forms on $\cM$.
Its quantization has been   studied  by  
Khudaverdian-Voronov  \cite{MR2757715} and more 
 recently  by Shemyakova \cite{MR4235776}.

Another important class of  
$(-1)$-shifted derived Poisson manifolds are derived intersections of
coisotropic submanifolds in a Poisson manifold. As a toy model,
we consider the  Lie-Poisson manifold corresponding to a Lie algebroid.
Let $A$ be a Lie algebroid and $s\in \Gamma (A^\vee)$
a Lie algebroid 1-cocycle, i.e.
a smooth section satisfying the condition that
$d_{\text{CE}}s=0$, where $d_{\text{CE}}$ is the
Chevalley--Eilenberg differential of the Lie algebroid
$A$. Both the graph of $s$ and the zero section  are  coisotropic submanifolds
of the Lie-Poisson manifold $A^\vee$. Their derived  intersection
is a  $(-1)$-shifted derived Poisson manifold
 $(A^\vee [-1], \iota_s, \Pi_2)$,
where $\Pi_2\in  \sections{S^2 T_{ A^\vee [-1]}}$ is the Poisson tensor
determined by the Schouten algebra on $C^\infty ( A^\vee [-1])\cong
 \sections{\Lambda^{-\bullet}A}$.
In this  situation, the  $\BV_\infty$-operator is related to
the  Evens-Lu-Weinstein module  \cite{MR1726784}.

\begin{introthm}[Theorem~\ref{thm:main3}]
Let $A$ be a Lie algebroid and $s\in \Gamma (A^\vee)$
a Lie algebroid 1-cocycle.
Assume that $M$ is an orientable manifold and the  vector bundle $A\to M$
 is also orientable as well.
%
 Then the $(-1)$-shifted derived Poisson manifold
$(A^\vee [-1], \iota_s, \Pi_2)$
admits a canonical quantization:
\begin{equation}
\Delta=\hbar \iota_s+ \hbar^2 \Phi \circ d_{\text{CE}}^{\text{ELW}} \circ \Phi^{-1}
: \sections{\Lambda^{-\bullet } A \otimes \big( \Lambda^{\text{top}} A^\vee \otimes
\Lambda^{\text{top}}T^\vee_M\big)^\half }
\to \sections{ \Lambda^{ -(\bullet+1)} A \otimes \big( \Lambda^{\text{top}} A^\vee \otimes
\Lambda^{\text{top}}T^\vee_M\big)^\half }.
\end{equation}
Here $\Phi$ is the canonical isomorphism \cite[Section 5]{AX03} \cite[Section 6.3]{MR4312284}:
\begin{equation}
\Phi:\sections{ \Lambda^k A^\vee \otimes \big( \Lambda^{\text{top}} A \otimes
\Lambda^{\text{top}}T^\vee_M\big)^\half }
\xto{\simeq}
\sections{ \Lambda^{ \text{top}-k} A \otimes \big( \Lambda^{\text{top}} A^\vee \otimes
\Lambda^{\text{top}}T^\vee_M\big)^\half },
\end{equation}
\end{introthm}
for $k=0, 1, \cdots$.

In particular, when $A$ is $T_M$ and $s=df\in \Omega^1 (M)$ an exact
one-form, where $f\in C^\infty (M)$,  the line bundle
$\big( \Lambda^{\text{top}} A \otimes \Lambda^{\text{top}}T^\vee_M\big)^\half $ is canonically isomorphic to the trivial line bundle $M\times \RR$
and
$d_{\text{CE}}^{\text{ELW}}$ reduces to the ordinary de Rham differential.
As an immediate consequence, we see that
\begin{equation}
\hbar \iota_{df}+ \hbar^2 \Phi \circ d_{DR} \circ \Phi^{-1}
: \sections{\Lambda^{-\bullet } T_M \otimes \big( \Lambda^{\text{top}}
T^\vee_M\big) }
\to
\sections{\Lambda^{-(\bullet+1) } T_M \otimes \big( \Lambda^{\text{top}}
T^\vee_M\big) }
\end{equation}
is  a
$\BV_\infty$-operator quantizing  the
$(-1)$-shifted derived symplectic manifold
 $(T_M^\vee [-1], \iota_{df}, \omega_{\text{can}})$,
where $\Phi: \Omega^k (M)\to \sections{
\Lambda^{ \text{top}-k} T_M \otimes \big( \Lambda^{\text{top}}T^\vee_M\big)
}, \forall k=0, 1, \cdots$ is the canonical isomorphism.

Finally, we note that several works on related
subject
have appeared recently   in the literature. For instance,
 we refer the readers to
\cite{MR3654355, MR3235797, MR3782422, MR3685170, MR1952122,  MR2070054, arXiv:1804.07622, MR4235776, MR4009180} and references therein.
In particular, we note that recently
Bandiera proved the homotopy transfer theorem for $\BV_\infty$-algebras
and showed that the homotopy transfer is compatible with
quantizations \cite{https://doi.org/10.48550/arxiv.2012.14812}.
The work in this paper is presented  in the context of  $\ZZ$-graded manifolds.
However,  it works for supermanifolds (i.e. $\ZZ_2$-graded) as well.

\color{black}

\subsection*{Notation}
Let $V=\oplus_{n\in\mathbb{Z}} V_n$ be a $\mathbb{Z}$-graded vector space over
$\mathbb{R}$,
  where an element $v\in V_n$ has degree $n$, written $|v| = n$. For $k\in\mathbb{Z}$, the symbol $V[k]$ denotes the $k$-shifted space such that $(V[k])_n:= V_{n+k}$. In particular, an element $v\in V$ of degree $n$ lies in $(V[k])_{n-k}$ if viewed as an element of $V[k]$. The dual vector space $V^\vee$ is graded according to $(V^\vee)_n:=(V_{-n})^\vee$, which ensures the non-degenerate paring $V\times V^\vee\rightarrow\mathbb{R}$ carries degree zero.

Given a graded vector space $V$, let $S(V)$ denote the symmetric algebra of $V$, where $S(V) = \oplus_{m\geq 0}S^m(V)$ decomposes into homogeneous terms graded naturally by weight. A weight $m$, degree $l$ element of $S(V)$ is thus
	\begin{equation*}
	v_{1}\odot\cdots\odot v_{m}\in S^m(V),\qquad |v_1|+\cdots+|v_m|=l.
	\end{equation*}
The symbol $\hat{S}(V)=\prod^\infty_{m=0}S^m(V)$ denotes the $\ff{m}$-adic completion of $S(V)$ with respect to the ideal $\ff{m}$ generated by $V$.

A $\mathbb{Z}$-graded manifold $\Cc{M}$ is a smooth manifold $M$ (called the support), together with a sheaf of $\mathbb{Z}$-graded commutative
 $C^\infty_{M}$-algebras over $M$, isomorphic to $\ci(U)\otimes_\RR \hat{S}(V^\vee)$ for
 any  sufficiently small open  neighborhood $U\subset M$,
where $V$ is a fixed
$\mathbb{Z}$-graded vector space, and
$C^\infty_M$ denotes  the sheaf
of $\RR$-valued smooth functions over $M$.
 If $M$ and $V$ are both finite dimensional,
 then $\Cc{M}$ is said to be of finite dimension. Throughout this work,
 we only  deal  with finite dimensional $\mathbb{Z}$-graded manifolds.
A dg manifold is a $\mathbb{Z}$-graded manifold  equipped with a
homological vector field, i.e. a vector field $Q$  of degree $(+1)$ such
that $Q^2=0$.


\section{$(-1)$-Shifted Derived Poisson Manifolds}

Our definitions and conventions follow those in \cite{MR4091493}, 
adopted from \cite{MR3653066, arXiv:1804.07622}.

\subsection{Definition}

\begin{definition}
\label{def:derived}
A   $(+1)$-shifted derived Poisson algebra is a $\mathbb{Z}$-graded
commutative algebra $\Cc{A}$ endowed with a sequence of degree
 $(+1)$ multi-linear maps $\lambda_n:\Cc{A}^{\otimes n}\rightarrow \Cc{A}$,
 $n\geq 1$, called {\em Poisson multi-brackets},
 defining a  $\L[1]$-algebra structure on $\Cc{A}$, and such that
for any all $n\geq 1$, and homogeneous  elements $a_1,\ldots,a_{n-1}\in\Cc{A}$
the map
	\begin{equation*}
 	a \mapsto \lambda_n(a_1,\ldots,a_{n-1},a)
	\end{equation*}
is a graded derivation  of degree $(1+|a_1|+\cdots |a_n|)$.
\end{definition}

\begin{remark}
In general, for any
 $k\in\mathbb{Z}$, 
 one can consider $k$-shifted derived Poisson algebras
  \cite{MR4091493}.
 For $k=0$, such algebras are known as homotopy Poisson algebras \cite{MR2757715}, or $P_\infty$-algebras \cite{MR2304327, MR2180451}.
 The associated
 $\mathbb{Z}_2$-graded derived Poisson algebras are known as homotopy Poisson and homotopy Schouten (or $S_\infty$) algebras for even and odd $k$ respectively,
which are due to  Khudaverdian-Voronov 
\cite{MR2757715, MR2163405}.
 The general behavior of these derived Poisson structures depends on the parity of the shift, and not on the value of the integer itself.
\end{remark}

\begin{definition}
A morphism $\phi:\Cc{A}\rightsquigarrow \Cc{B}$ of  $(+1)$-shifted derived
 Poisson algebras is a sequence  $(\phi_i)_{i\geq 1}$ of degree zero
 linear maps
 $\phi_i:S^i\Cc{A}\rightarrow \Cc{B}$, which defines
 an $\L[1]$-morphism from $\Cc{A}$ to $\Cc{B}$,
 and in addition  satisfies the derivation property:
    \begin{equation*}
    \phi_{n+1}(a_1,\ldots,a_{n},bc) = \sum^n_{l=0}\sum_{\tau\in \Sh(l,n-l)}\varepsilon (\tau)\phi_{l+1}(a_{\tau(1)},\ldots,a_{\tau(l)},b)
\phi_{n-l+1}(a_{\tau(l+1)},\ldots,a_{\tau(n)},c),
    \end{equation*}
where $a_1,\ldots,a_n,b,c\in\Cc{A}$ are homogeneous elements, $\Sh(l,n-l)$
 denotes the $(l,n-l)$-shuffles, and $\varepsilon(\tau)$ is  the Koszul
 sign.
\end{definition}

\begin{definition}
A  $(-1)$-shifted derived Poisson manifold is a $\mathbb{Z}$-graded manifold $\Cc{M}$ whose algebra of functions $\ci(\Cc{M})$ is a  $(+1)$-shifted derived Poisson algebra.
\end{definition}

Similar to an ordinary  Poisson manifold which is defined by an anti-symmetric
 degree $2$ bivector field, a degree $(-1)$-shifted derived Poisson manifold
 structure is equivalent to a \formal series of symmetric contravariant tensor fields on $\Cc{M}$ of degree $(+1)$.
A degree $l$ vector field $X\in \Gamma(T_\Cc{M})$ as a section of the tangent bundle $T_{\Cc{M}}$ is a derivation $X:\ci(\Cc{M})\rightarrow\ci(\Cc{M})$ of degree $l$. The completed symmetric algebra
    \begin{equation*}
     \Gamma(\hat{S}T_\Cc{M}): = \prod_{n\geq 0}\Gamma(S^nT_\Cc{M})
    \end{equation*}
then consists of \formal series of symmetric contravariant tensor fields. It comes furnished with the degree $0$ Poisson bracket:
    \begin{equation}\label{eq:Manchester}
    \allbracket X_1\odot\cdots\odot X_n, \ Y_1\odot\cdots\odot Y_m\arrbracket = \sum^n_{k=1}\sum^m_{l=1}\delta_k\varepsilon_l \bX^{\{k\}}\odot[X_k,Y_l]\odot \bY^{\{l\}},
    \end{equation}
where $\bX^{\{k\}} = X_1\odot\cdots\odot X_{k-1}\odot X_{k+1}\odot\cdots\odot X_n$, $\bY^{\{l\}} = Y_1\odot\cdots\odot Y_{l-1}\odot Y_{l+1}\odot\cdots\odot Y_n$, and the graded signs are given by the Koszul rule
 $\delta_k = (-1)^{|X_k|(|X_{k+1}|+\cdots+|X_n|)}$ and $\varepsilon_l=(-1)^{|Y_l|(|Y_{1}|+\cdots+|Y_{l-1}|)}$. 


\begin{remark}
The completed symmetric algebra $\Gamma(\hat{S}T_\Cc{M})$ may be naturally
 identified with the algebra of fiberwise
\formal  series on $T^\vee_\Cc{M}$, whence the degree 0 Poisson bracket
\eqref{eq:Manchester} coincides with the degree 0 Poisson bracket
 arising from the canonical symplectic structure on $T^\vee_\Cc{M}$.
\end{remark}

\begin{theorem}\cite{MR3653066, arXiv:1804.07622, MR4091493}
\label{Section 1 Thm Shifted Poisson structure equiv to formal series}
A  $(-1)$-shifted derived Poisson manifold $\Cc{M}$ is equivalent to 
a  differential graded (dg) manifold $(\Cc{M},Q)$ equipped with a degree 
$(+1)$ \formal series $\Pi\in\Gamma(\hat{S}T_{\Cc{M}})$, 
where $\Pi = \sum_{n\geq 2}\Pi_n$ with $\Pi_n\in \sections{S^n T_\cM}$,
 satisfies  the Maurer-Cartan equation
	\begin{equation}
\label{Section 1 Eqn Maurer-Cartan equation for poisson structure}
	\allbracket Q,\Pi\arrbracket + \F{1}{2}\allbracket\Pi,\Pi\arrbracket = 0.
	\end{equation}
Here $\allbracket -, -\arrbracket$ stands 
for the canonical Poisson bracket \eqref{eq:Manchester}.
\end{theorem}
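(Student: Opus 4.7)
The plan is to establish the equivalence in two stages: first translating the Poisson multi-brackets $\lambda_n$ into symmetric contravariant tensor fields $\Pi_n$, and then matching the $\L[1]$-Jacobi identities with the Maurer-Cartan equation.

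In the first stage, I would use the explicit derivation property from Definition \ref{def:derived} together with the graded symmetry inherent in the $\L[1]$ axioms. Jointly these force each $\lambda_n : \ci(\Cc{M})^{\otimes n} \to \ci(\Cc{M})$ to be a graded-symmetric multi-derivation of degree $(+1)$. The standard correspondence between such multi-derivations on a graded manifold and symmetric contravariant tensor fields then produces a unique $\Pi_n \in \sections{S^n T_\Cc{M}}$ of degree $(+1)$. In particular $\lambda_1$ yields a degree $(+1)$ vector field $Q := \Pi_1$, and the remaining terms combine into the formal series $\Pi := \sum_{n \geq 2} \Pi_n$.

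For the second stage, the most transparent viewpoint is the identification $\Gamma(\hat{S}T_\Cc{M}) \cong \ci(T^\vee_\Cc{M})$, under which each $\Pi_n$ becomes a fiberwise polynomial of weight $n$ and the bracket \eqref{eq:Manchester} becomes the canonical Poisson bracket on the cotangent bundle. Because this bracket decreases the total fiber-weight by one, the weight-$n$ component of $\allbracket Q, \Pi \arrbracket + \half \allbracket \Pi, \Pi \arrbracket = 0$ is a finite sum of terms of the form $\allbracket \Pi_p, \Pi_q \arrbracket$ with $p+q = n+1$, which matches term-by-term against the $\L[1]$-Jacobi identity of arity $n$ for the $\lambda_k$'s. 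The condition $Q^2 = 0$ recovers the arity-$1$ identity and is built into the dg manifold structure on $(\Cc{M}, Q)$; the higher arities are precisely the content of the Maurer-Cartan equation.

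The chief obstacle is sign matching, namely verifying that the Koszul signs $\delta_k$ and $\varepsilon_l$ appearing in \eqref{eq:Manchester} reassemble into the shuffle signs of the $\L[1]$-Jacobi identities. I plan to manage this by first treating the linear case, where $\Cc{M}$ is a graded vector space and all computations reduce to explicit polynomial identities in the coefficients of the $\Pi_n$'s in Darboux coordinates on $T^\vee_\Cc{M}$. The general case then follows by locality: multi-derivations, symmetric contravariant tensor fields, and both brackets are all local on $\Cc{M}$, so the coordinate-level correspondence upgrades immediately to a global statement.
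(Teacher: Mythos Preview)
The paper does not supply its own proof of this theorem; it is stated with citations to Pridham and to Bandiera--Chen--Sti\'enon--Xu and then used as a known result. There is therefore no in-paper argument to compare against.

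Your outline is the standard argument and is correct. The two stages you isolate---first using the derivation axiom together with the $\L[1]$ symmetry to identify each $\lambda_n$ with a section $\Pi_n\in\Gamma(S^nT_\Cc{M})$, and then matching the weight-graded components of $\allbracket Q+\Pi,\,Q+\Pi\arrbracket=0$ against the $\L[1]$-Jacobi identities via the identification $\Gamma(\hat{S}T_\Cc{M})\cong\ci(T^\vee_\Cc{M})$---are exactly how the cited references proceed. One small comment on your plan for the sign check: there is no real advantage in passing through an auxiliary ``linear'' model; in a local coordinate chart on $\Cc{M}$ the computation is already a polynomial identity in Darboux coordinates on $T^\vee_\Cc{M}$, so you may as well work locally on $\Cc{M}$ from the start. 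The only point to make explicit is why the multi-derivation $\lambda_n$ determines $\Pi_n$ uniquely: this uses that exact one-forms $df$ generate $\Gamma(T^\vee_\Cc{M})$ over $\ci(\Cc{M})$, which is immediate in local coordinates.
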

As in  classical Poisson geometry,  $Q+\Pi$ is often
 called a  \emph{Poisson tensor}.

Let $\cM$ and $\cM'$ be $(-1)$-shifted derived Poisson manifolds
 with  Poisson multi-brackets
$\lambda_l: $ $(\cinf{\cM})^{\otimes l}$ $\to$ $\cinf{\cM}$
and $\lambda'_l: $ $(\cinf{\cM'})^{\otimes l}$ $\to$ $\cinf{\cM'}$, $ l\geqslant 1$, respectively. 
A morphism of $(-1)$-shifted derived Poisson manifolds from $\cM$
 to $\cM'$ is a map of $\mathbb{Z}$-graded manifolds
 $\phi: \cM\to \cM'$ together with a collection of maps
\[ \varphi_n: (\cinf{\cM'})^{\otimes n}\rightarrow \cinf{\cM} ,\quad n=2,3,\cdots \]
such that $\varphi_\infty$ $=$ $(\varphi_1=\phi^*, \varphi_2,\varphi_3,\cdots)$ is a morphism of degree $(+1)$ derived Poisson algebras
from $(\cinf{\cM'},$ $\lambda'_1$, $\lambda'_2$, $\cdots$ , $\lambda'_n$, $\cdots)$
to $(\cinf{\cM},\lambda_1, \lambda_2, \cdots , \lambda_n, \cdots)$
 \cite{MR4091493}.
In particular, $\phi: \cM\to\cM'$ is a map of dg manifolds.
Geometrically, these morphisms $\varphi_\infty$   correspond to
the thick morphisms of graded manifolds due to
  Voronov \cite{MR3588925, MR3894641, MR4035066,  arxiv.1808.10049}.


\begin{example}\label{Section 1 Ex Lie Poisson brackets}
For a finite dimensional $\L$-algebra $\frakg$, the  multi-brackets
$l_n:S^n (\frakg [1]) \rightarrow \frakg [1] $
on the $\L[1]$-algebra $\frakg[1]$ may be extended via the Leibniz rule
 to equip $\frakg^\vee[-1]\cong (\frakg [1])^\vee$
  with the structure of a  $(-1)$-shifted derived Poisson manifold.
 Define
    \begin{equation*}
    \lambda_n(\xi_{a_1},\ldots,\xi_{a_n}) := \pm C^b_{a_1\ldots a_n}\xi_b,
    \end{equation*}
where $\xi_{a_1},\ldots,\xi_{a_n}$ are the
coordinate functions on $\frakg^\vee[-1]$ corresponding to a
 basis $\{e_a\}$ of $\frakg[1]$,
 and $C^b_{a_1\ldots a_n}$ are the degree $(+1)$
 structure constants of the $n$th multi-bracket $l_n$.
\end{example}

\begin{example}\label{example-three}
Let $A$ be a Lie algebroid and $s\in \Gamma (A^\vee)$ a Lie
algebroid 1-cocycle, i.e.
a smooth section satisfying the condition that
$d_{\text{CE}}s=0$, where $d_{\text{CE}}$ is the 
Chevalley--Eilenberg differential of the Lie algebroid
$A$. Consider the dg manifold $(A^\vee [-1], Q)$,
where the algebra of functions $C^\infty ( A^\vee [-1]) \cong
 \sections{\Lambda^{-\bullet}A}$ 
and $Q=\iota_s$, the interior product with $s$. This dg manifold
describes the derived  intersection of
the graph of $s$ with the zero section of  $A^\vee$.
Note that both the graph of $s$ and the zero section
are coisotropic submanifolds of the Lie-Poisson
manifold $A^\vee$.
Let $\Pi_2\in  \sections{S^2 (T_{A^\vee [-1]})}$ be the Poisson
tensor defining the Schouten bracket on
$C^\infty ( A^\vee [-1])\cong
\sections{\Lambda^{-\bullet}A}$. Then $(A^\vee [-1], \iota_s, \Pi_2)$
is indeed a $(-1)$-shifted derived Poisson manifold, which
describes the derived  intersection of coisotropic submanifolds
$\text{graph}(s)$ and the zero section
of  the Lie-Poisson
manifold $A^\vee$.

In particular, when $A$ is $T_M$ and $s\in \Omega^1 (M)$ is a closed
one-form, we have a $(-1)$-shifted derived Poisson manifold 
$(T^\vee_M[-1], \iota_s, \Pi_2)$, which
is indeed a $(-1)$-shifted derived symplectic manifold, denoted
$(T_M^\vee [-1], \iota_{s}, \omega_{\text{can}})$. It
is the  derived  intersection, in the
symplectic manifold $T^\vee_M$,
 of two Lagrangian submanifolds: the graph of $s$ and the zero section.
\end{example}

The following example is due to Khudaverdian-Voronov \cite{arxiv.1808.10049}. See also \cite{MR4235776}.

\begin{example}\label{Section 1 Ex Koszul brackets}
A homotopy Poisson structure on a  $\zz$-graded manifold $\Cc{M}$, according to Khudaverdian-Voronov
 \cite{MR2757715},
is defined by a degree $(+2)$ \formal series
 $P = \sum_{n\geq 1}P_n\in\Gamma(\hat{S}(T_\Cc{M}[-1]))$,
 satisfying  the equation $\llbracket P,P\rrbracket=0$,
 where the canonical Schouten bracket
$\llbracket \cdot , \cdot \rrbracket$ carries degree $(-1)$\footnote{In this
case, $C^\infty (\cM)$ is equipped with a $P_\infty$-algebra structure \cite{MR2304327, MR2180451}.}.
 Thus $P$ corresponds to
 a degree $(+2)$ \formal series in $\ci(T^\vee_\Cc{M}[1])$, whose
  Hamiltonian  vector field $d_P$ is a
homological vector field  on $T^\vee_\Cc{M}[1]$.
It in turn  determines a degree $(+1)$ fiberwise linear function
 on the cotangent bundle $h_P\in\ci(T^\vee_{T^\vee_{\Cc{M}}[1]})$ such that
    \begin{equation*}
    \{h_P,-\} = d_P,\qquad \{h_P,h_P\}=0,
    \end{equation*}
for the canonical degree $0$ Poisson bracket on $T^\vee_{T^\vee_{\Cc{M}}[1]}$.
 Via the canonical isomorphism of double vector bundles
 $T^\vee_{T^\vee_\Cc{M}[1]}\cong T^\vee_{T_\Cc{M}[-1]}$,
one obtains a degree $(+1)$ function on $T^\vee_{T_\Cc{M}[-1]}$
 satisfying the Maurer-Cartan equation, and hence a
 $(-1)$-shifted derived Poisson structure on $T_\Cc{M}[-1]$.
\end{example}

\begin{remark}
In the $\mathbb{Z}_2$-grading case, the multi-brackets
corresponding to the $(-1)$-shifted derived Poisson structure on
 $T_\Cc{M}[-1]$ in Example
\ref{Section 1 Ex Koszul brackets} are called the higher
Koszul brackets \cite{MR2757715, arxiv.1808.10049, MR4235776}. 
In this   case, if there is only a  single
binary bracket, it reduces to
 the classical Koszul bracket \cite{MR837203, MR950556, MR1675117}
 defined on differential forms.
\end{remark}

\subsection{Poisson cohomology}

Let $(\Cc{M}, Q, \Pi)$ be a $(-1)$-shifted derived Poisson manifold. The Lichnerowicz differential $d_\Pi:\Gamma(\hat{S}T_{\Cc{M}})\rightarrow \Gamma(\hat{S}T_{\Cc{M}})$ of degree $(+1)$ is defined by
	\begin{equation*}
	d_\Pi =\Cc{L}_Q+ \allbracket \Pi, \ \ -\arrbracket=
  \allbracket Q+\Pi, \ \ -\arrbracket,\qquad |d_\Pi| = 1.
	\end{equation*}
That $d^2_\Pi = 0$ is equivalent to the Maurer-Cartan equation
 \eqref{Section 1 Eqn Maurer-Cartan equation for poisson structure}.
 Then  $(\Gamma(\hat{S}T_\Cc{M}),d_\Pi)$ is a cochain complex,
whose cohomology groups form the Poisson  cohomology
of the $(-1)$-shifted derived Poisson manifold $\Cc{M}$:
    \begin{equation*}
\Cc{H}^k(\Cc{M}, Q+ \Pi):= \F{\ker(d_\Pi:\Gamma(\hat{S}T_\Cc{M})_k\rightarrow \Gamma(\hat{S}T_\Cc{M})_{k+1})}{\im(d_\Pi:\Gamma(\hat{S}T_\Cc{M})_{k-1}\rightarrow \Gamma(\hat{S}T_\Cc{M})_k)},\qquad k\in \ZZ.
	\end{equation*}

\begin{example}
    Let $\Cc{M}$ be a $(-1)$-shifted derived
 Poisson manifold with a single binary bracket defined by the degree $(+1)$ symmetric tensor field $\Pi_2$.
Assume  that $\Pi_2$ is nondegenerate.
Hence $\Cc{M}$ is a $(-1)$-shifted symplectic manifold, and
 there exists a  bundle isomorphism $T^\vee_\Cc{M}\stackrel{\simeq}{\to}
 T_\Cc{M}[1]$ defined by $\alpha\mapsto \Pi_2(\alpha,-)$, for
any differential one form $\alpha\in\Gamma(T^\vee_\Cc{M})$.
    
    This isomorphism induces an isomorphism between Poisson and de Rham cohomologies in the usual sense.
\end{example}


\section{$BV_\infty$-operators on Berezinian half-densities}

\subsection{Differential operators on half-densities}
\label{sec:2.1}

In this section,
 we recall some basic properties regarding differential operators acting on
 half densities \cite{MR1202882, MR4235776,  MR3307151, MR1701597}.

Here for a $\zz$-graded manifold $\Cc{M}$, we always
 assume that the normal bundle $N_{\Cc{M}/{\Cc{M}_{\text{even}}}}$
 to the even submanifold  ${\Cc{M}_{\text{even}}} \subset \Cc{M}$
is orientable so that one can speak about 
 half densities and the scalar product of
half densities. See   \cite[Remark 2]{MR3307151}.
\color{black}
Let $\B$ denote the Berezinian half-density line bundle over a
 graded manifold $\Cc{M}$, and let $\Gamma(\B)$ be the space of
 global smooth sections, i.e. the space of Berezinian half-densities on $\Cc{M}$. For a fixed coordinate system $(x^a)$ on $\Cc{M}$, a local basis element will be represented by the symbol $\sden$ and a Berezinian half-density takes the expression $\bs = s(x)\sden$, where $s(x)\in C^\infty (\Cc{M})$. As a geometric
 object, a half-density $\bs$ transforms according to the law
    \begin{equation*}
        \bs = s(x)\sden = s(x(x'))\big|\Berr\left(\F{\p x}{\p x'}\right)\big|^{\F{1}{2}}\sqrt{\Cc{D}x'},\qquad x=x(x'),
    \end{equation*}
where $\big|\Berr\left(\F{\p x}{\p x'}\right)\big|$ is the
absolute value of the Berezinian, or superdeterminant, of the 
Jacobian matrix $\left(\F{\p x}{\p x'}\right)$ of the coordinate transformation $x=x(x')$ \cite{MR1202882, MR4235776, MR3307151}.

\begin{example}
Let $\Cc{M}$ be  a $\zz$-graded manifold
 corresponding to a graded vector bundle
$E\to M$.
 That is, the graded manifold with support $M$ where the sheaf $\cA$
 of $\mathbb{Z}$-graded commutative $C^\infty_{M}$-algebras over $M$
is given by $\cA|_U=\Gamma(U, \hat{S}(E^\vee))$,
 for any open  neighborhood $U\subset M$.
Then ${\Cc{M}_{\text{even}}}$ corresponds to the graded vector bundle
$E_{\text{even}}\to M$ of  even degrees. The
condition of  $N_{\Cc{M}/{\Cc{M}_{\text{even}}}}$ being orientable
 is equivalent to requiring that the vector bundle
$E_{\text{odd}}\to M$ of  odd degrees is orientable as
an ordinary vector bundle by forgetting about the degrees.
In this case, the Berezinian line bundle $\text{Ber} (\Cc{M})\to \Cc{M}$
is isomorphic to the pull back bundle of the line bundle 
$\Lambda^{\text{top}} E_{\text{odd}}  \otimes \Lambda^{\text{top}}
E_{\text{even}}^\vee \otimes \Lambda^{\text{top}}T^\vee_M \to M$
via the projection map $\pi: \Cc{M}\to M$ \cite{MR2275685},
while  the Berezinian half-density line bundle $\B$
is isomorphic to the pull back bundle of the line bundle
$(\Lambda^{\text{top}} E_{\text{odd}})^{\F{1}{2}}  \otimes
|\Lambda^{\text{top}} E_{\text{even}}^\vee  \otimes
\Lambda^{\text{top}}T^\vee_M|^{\F{1}{2}} \to M$
via the projection map $\pi: \Cc{M}\to M$.
Here  $|\Lambda^{\text{top}} E_{\text{even}}^\vee  \otimes
\Lambda^{\text{top}}T^\vee_M|$ denotes the line bundle 
$\Lambda^{\text{top}} E_{\text{even}}^\vee  \otimes
\Lambda^{\text{top}}T^\vee_M$ tensored  by its orientation bundle.
Since $E_{\text{odd}}\to M$ is  orientable,
 $\Lambda^{\text{top}} E_{\text{odd}}$ can be identified with
 $|\Lambda^{\text{top}} E_{\text{odd}}|$
and hence $(\Lambda^{\text{top}} E_{\text{odd}})^{\F{1}{2}} $
is defined, by abuse of notation. 
\end{example}

\begin{definition}
A differential operator acting on Berezinian half-densities, of order  
 $\leq n\in\mathbb{Z}$, 
 is an endomorphism of sections
    \begin{equation*}
        \Delta:\Gamma(\B)\rightarrow \Gamma(\B)
    \end{equation*}
such that:
\begin{itemize}

\item for any $n<0$, all operators are identically zero;
\item for $n=0$, any operator $\Delta$ is identified with a function $f\in\ci(\Cc{M})$ which acts through the $\ci(\Cc{M})$-module structure; and
\item for all $n>0$, the commutator $[\Delta,f]$ is a differential operator
 of order $\leq  n-1$ for any operator $f$ of zero order.
\end{itemize}
\end{definition}

Under a chosen coordinate system $(x^a)$, a differential operator of order 
$\leq n$ is determined by a basis of partial derivatives:
	\begin{equation}\label{Section 2 Eqn Local nth orer differential operator}
	   \Delta\bs = \sum^n_{k=0}\F{1}{k!}\Delta^{a_1\cdots a_k}(x)\p_{a_k}\ldots\p_{a_1}s(x)\sqrt{\Cc{D}x},\qquad \bs\in\Gamma(\B).
	\end{equation}

The space of all differential operators on Berezinian half-densities is  denoted by $\DO(\B)$, and carries an increasing filtration determined by the order
 of  differential operators:
     \begin{equation}\label{Section 2 Eqn Filtration by order of diff ops}
        \DO^0(\B)\subset\DO^{\leq 1}(\B)\subset\DO^{\leq 2}(\B)\subset \cdots,\qquad
     \end{equation}
where $\DO^{\leq n}(\B)$ denotes the space of differential operators of order 
at most $n$. In particular, $\DO^0(\B)\cong\ci(\Cc{M})$,
 where functions act by the $\ci(\Cc{M})$-module structure, while $\DO^{\leq 1}(\B)$ is naturally identified with $\Gamma(T_\Cc{M})\oplus\ci(\Cc{M})$,
where vector fields act by taking the Lie derivative. 

It is clear that $\DO(\B)$ is a filtered 
associated algebra when being equipped with the natural multiplication
of composition of operators:
$$        \DO^{\leq n}(\B) \cdot \DO^{\leq m}(\B)\subseteq\DO^{\leq n+m}(\B).
$$
When being equipped with the  commutator of endomorphisms, $\DO(\B)$ is a filtered Lie algebra:
    \begin{equation}
        \label{eq:Cambridge}
        \left[\DO^{\leq n}(\B),\DO^{\leq m}(\B)\right]\subseteq\DO^{\leq n+m-1}(\B)
    \end{equation}
reducing the order by one.
Define the associated graded algebras
    \begin{gather*}
        \Gr\DO(\B) = \bigoplus_{n\geq0}\Gr^n\DO(\B), \quad \text{ and} \label{eq:ERW1}\\ 
\widehat{\Gr}\DO(\B)=\prod_{n\geq0}\Gr^n\DO(\B) \label{eq:ERW2}
    \end{gather*}
where 
$$\Gr^n\DO(\B):=
            \DO^{\leq n}(\B)/\DO^{\leq n-1}(\B).$$

The following is an extension, in the graded context,  of   a well-known
result for ordinary manifolds.
For completeness, we will sketch a proof.
\color{black}

\begin{lemma}
\begin{itemize}
\item[(1).] As a $\ci(\Cc{M})$-module, $\Gr^n\DO(\B)$
 can be naturally identified with $\Gamma({S}^n T_{\Cc{M}})$.
\item[(2).] Under the identification above, 
$\Gr\DO(\B)$ and $\widehat{\Gr}\DO(\B)$ can be naturally identified,
as  $\ci(\Cc{M})$-modules,
with $ \Gamma({S}T_\Cc{M})$ and $ \Gamma(\hat{S}T_\Cc{M})$, respectively.
\end{itemize}
\end{lemma}
\begin{proof}
{(1).}
Introduce the $n$th principal symbol map 
\begin{equation*}
\sigma_n:\DO^{\leq n}(\B)\rightarrow \Gamma(S^nT_{\Cc{M}})
\end{equation*}
as follows. Locally, write $\Delta=\sum^n_{k=0}\F{1}{k!}\Delta^{a_1\cdots a_k}(x)\p_{a_k}\ldots\p_{a_1}$, as in
 Eq. \eqref{Section 2 Eqn Local nth orer differential operator}. Then
 $\sigma_n(\Delta)$ is a
weight $n$ symmetric contravariant tensor field
on $\Cc{M}$ of equal degree, given  by
 \begin{equation}
\label{eq:sigman}
        \sigma_n(\Delta) = \sigma_n\left(\sum^n_{k=0}\F{1}{k!}\Delta^{a_1\cdots a_k}(x)\p_{a_k}\ldots\p_{a_1}\right) = \F{1}{n!}\Delta^{a_1\cdots a_n}(x)\p_{a_n} \odot \ldots \odot  \p_{a_1}.
    \end{equation}
It is simple to check that
$\sigma_n$ 
is indeed a well-defined $\ci(\Cc{M})$-module map, and  moreover, 
any operator  $\Delta$
 is of order $< n$ if and only if  $ \sigma_n(\Delta)$
 vanishes identically. Thus it follows that $\ker  \sigma_n
\cong \DO^{\leq n-1}(\B)$ and therefore the conclusion follows.

(2). This is an immediate consequence of (1).
\end{proof}

Again the following lemma extends a standard result for
ordinary differential operators to the graded context.
\color{black}

\begin{lemma}\label{Section 2 Prop Principal symbol homomorphism}
Let  $\Delta\in \DO^{\leq n}(\B)$ and $\Delta'\in \DO^{\leq m}(\B)$.
  Then
\begin{itemize}
\item[(1).]  
\begin{equation}
\label{eq:Lecce}
\sigma_{n+m}\left(\Delta\cdot \Delta'\right) = \sigma_n(\Delta)\sigma_m(\Delta')\quad \quad \text{and}
\end{equation}
\item[(2).]
    \begin{equation}\label{Section 2 Eqn Principal symbol Lie alg homomorphism}
        \sigma_{n+m-1}\left([\Delta,\Delta']\right) = \allbracket\sigma_n(\Delta),\sigma_m(\Delta')\arrbracket,
    \end{equation}
where $\allbracket \cdot, \cdot \arrbracket$ is the  
degree $0$ Poisson bracket on $\Gamma (ST_\cM)$
 as given by Eq.  \eqref{eq:Manchester}.
%
\end{itemize}
\end{lemma}
\begin{proof}
\color{black}
(1). It suffices to prove Eq. \eqref{eq:Lecce} locally.
 The operators  $\Delta$ and $\Delta'$  can be written locally as follows:
\begin{eqnarray*}
\Delta&=&\sum^n_{k=0}\F{1}{k!}\Delta^{a_1\cdots a_k}(x)\p_{a_k}\ldots\p_{a_1}
\quad \quad \text{and}\\
\Delta'&=&\sum^m_{k=0}\F{1}{k!}(\Delta')^{a_1\cdots a_k}(x)\p_{a_k}\ldots\p_{a_1}.
\end{eqnarray*}
Then $\Delta\cdot \Delta'\in \DO^{\leq n+m}(\B)$
and its leading term is equal to
$$ 
 \F{1}{n!m!} 
(-1)^{|(\Delta')^{b_1\cdots b_m}|(|\p_{a_n}|+\cdots+|\p_{a_1}|)}
\Delta^{a_1\cdots a_n}(x) (\Delta')^{b_1\cdots b_m}(x)
\p_{a_n}\ldots\p_{a_1} \p_{b_m}\ldots\p_{b_1} .
$$
Therefore,
\begin{eqnarray*}
&& \sigma_{n+m} \left(\Delta\cdot \Delta'\right)\\
&=&\F{1}{n!m!} (-1)^{|(\Delta')^{b_1\cdots b_m}|(|\p_{a_n}|+\cdots+|\p_{a_1}|)}
\Delta^{a_1\cdots a_n}(x)  (\Delta')^{b_1\cdots b_m}(x)
\p_{a_n} \odot \ldots \odot  \p_{a_1}\odot \p_{b_m} \odot \ldots \odot  \p_{b_1}\\
&=&\sigma_n(\Delta)\sigma_m(\Delta')
\end{eqnarray*}

(2). We prove it by induction on  the pair $(n, m)$.
It follows from a direct verification that
   Eq. \eqref{Section 2 Eqn Principal symbol Lie alg homomorphism} holds in the case that $n=0, 1$ and $m=0, 1$.
Assume that  Eq. \eqref{Section 2 Eqn Principal symbol Lie alg homomorphism}
holds for $n\leq N$ and $m\leq M$ with $N\geq 1$ and $M\geq 1$.
 We will prove that Eq. \eqref{Section 2 Eqn Principal symbol Lie alg homomorphism}
 holds for $n\leq N$ and $m= M+1$.
Consider the case that  $\Delta'=\Delta'_1 \cdot \Delta'_2$,
where $\Delta'_1 \in \DO^{\leq m_1}(\B)$ and $\Delta'_2 \in \DO^{\leq m_2}(\B)$
are of homogeneous degree, and
  $m=M+1=m_1+m_2$ with  $m_1\leq M$ and $m_2\leq M$.
Then 
\begin{eqnarray*}
&&\sigma_{n+m-1} \left([\Delta,  \Delta']\right)\\
&=&\sigma_{n+m-1} \left([\Delta,  \Delta'_1 \cdot \Delta'_2]\right)\\
&=&\sigma_{n+m-1} \left( [\Delta,  \Delta'_1]\cdot \Delta'_2
+(-1)^{|\Delta||\Delta'_1|} \Delta'_1 \cdot [\Delta, \Delta'_2]\right) \quad
\text{(by Eq. \eqref{eq:Lecce})}\\
&=&\sigma_{n+m_1 -1}\left( [\Delta,  \Delta'_1]\right)\cdot 
\sigma_{m_2}(\Delta'_2)+(-1)^{|\Delta||\Delta'_1|}\sigma_{m_1}(\Delta'_1 )
\sigma_{n+m_2 -1} \left( [\Delta,  \Delta'_2]\right)\  \text{(by induction
assumption)}\\
&=&\allbracket\sigma_n(\Delta),\sigma_{m_1}(\Delta'_1) \arrbracket
\sigma_{m_2}(\Delta'_2) +
(-1)^{|\Delta||\Delta'_1|}\sigma_{m_1}(\Delta'_1)\allbracket\sigma_n(\Delta),\sigma_{m_2}(\Delta'_2) \arrbracket\\
&=&\allbracket\sigma_n(\Delta), \sigma_{m_1}(\Delta'_1)\sigma_{m_2}(\Delta'_2) \arrbracket \quad \quad \text{(by Eq. \eqref{eq:Lecce})}\\
&=&\allbracket\sigma_n(\Delta), \sigma_{m_1 +m_2} (\Delta'_1 \cdot \Delta'_2)
\arrbracket\\
&=&\allbracket\sigma_n(\Delta), \sigma_m (\Delta')\arrbracket .
\end{eqnarray*}
Since any operator in  $\DO^{\leq M+1}(\B)$ can always be written
as such a product $\Delta'_1 \cdot \Delta'_2$ module $\DO^{\leq M}(\B)$,
 we conclude that Eq. \eqref{Section 2 Eqn Principal symbol Lie alg homomorphism}
 holds for any  $n\leq N$ and $m= M+1$.
By induction, it implies that Eq. \eqref{Section 2 Eqn Principal symbol Lie alg homomorphism} holds for any  $n\leq N$ and $m\in \NN$.
Since both sides of Eq. \eqref{Section 2 Eqn Principal symbol Lie alg homomorphism} 
are skew-symmetric,  the conclusion thus follows.
\end{proof}


As a consequence, 
the principal symbol maps establish an isomorphism of degree 
zero Poisson algebras 
$$\widehat{\Gr}\DO(\B)\stackrel{\simeq}{\rightarrow}\Gamma(\hat{S}T_\Cc{M}),$$
 where the bracket on the completed graded algebra $\widehat{\Gr}\DO(\B)$ is induced from the commutator of differential operators, and the bracket on $\Gamma(\hat{S}T_\Cc{M})$ is the canonical Poisson bracket \eqref{eq:Manchester}. 

The existence of a canonical non-degenerate scalar product $\langle-,-\rangle$ 
on half-densities allows us to speak naturally about the (formal) adjoint of 
a differential operator as in the classical case \cite{MR2273508}.

\begin{definition}
Let $\Delta\in \DO^{}(\B)$ be a differential operator
of homogeneous degree. The  (formal) adjoint operator $\Delta^+\in \DO^{}(\B)$ is defined by the relation
	\begin{equation*}
	   \langle \Delta\bs,\bt\rangle = (-1)^{|\Delta||\bs|}\langle \bs,\Delta^+\bt\rangle,
	\end{equation*}
where $\bs,\bt\in\Gamma(\B)$ are compactly supported half-densities of
 homogeneous degrees.
\end{definition}

As in the classical case of ordinary differential operators
\cite[Section 2.1]{MR2273508} \cite[Section VIII]{MR751959},
one can see  that 
for any operator $\Delta$, the (formal) adjoint operator always exists and is unique.

The following lemma can be verified directly.

\begin{lemma}
    \label{lem:CDG} 
    \begin{enumerate} 
    \item Taking the (formal) adjoint is a linear operation on $\DO(\B)$.
    \item If $\Delta$ is a differential operator of order $n$ and of homogeneous degree, then the (formal) adjoint operator $\Delta^+$ is also of order $n$ and carries the same degree.
    \item For any operator $\Delta$, the (formal) adjoint is an involution, i.e.
        $$(\Delta^{+})^+ = \Delta.$$
    \item  For any pair of operators $\Delta$ and $\Delta'$ of homogeneous
 degrees,
        $$(\Delta\cdot \Delta')^+ = (-1)^{|\Delta||\Delta'|}{\Delta'}^+\cdot
\Delta^+.$$
\end{enumerate}
\end{lemma}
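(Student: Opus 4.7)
The plan is to verify each assertion directly from the defining relation $\langle\Delta\bs,\bt\rangle = (-1)^{|\Delta||\bs|}\langle\bs,\Delta^+\bt\rangle$ together with two structural properties of the canonical scalar product on half-densities: non-degeneracy, and the graded symmetry $\langle\bs,\bt\rangle = (-1)^{|\bs||\bt|}\langle\bt,\bs\rangle$ inherited from graded commutativity of the pointwise product. Throughout, uniqueness of the adjoint reduces every identity to a sign-matching computation on the pairing.

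Part (1) is immediate from bilinearity of the pairing together with uniqueness of the adjoint. For the degree statement in (2), I would observe that the scalar product carries degree zero, so the defining relation is nontrivial only when $|\Delta|+|\bs|+|\bt|=0$ coincides with $|\bs|+|\Delta^+\bt|+|\bt|=0$, which forces $|\Delta^+|=|\Delta|$. For (3), I would apply the defining relation to $\Delta^+$ in place of $\Delta$, then use graded symmetry twice to rearrange the resulting expression back into the defining relation for $\Delta$ acting on the other slot; non-degeneracy identifies $(\Delta^+)^+=\Delta$. For (4), I would expand $\langle\Delta\Delta'\bs,\bt\rangle$ by first applying the defining relation for $\Delta$ with argument $\Delta'\bs$, then the defining relation for $\Delta'$; the accumulated Koszul signs collapse to precisely the factor $(-1)^{|\Delta||\Delta'|}$, and non-degeneracy yields $(\Delta\Delta')^+ = (-1)^{|\Delta||\Delta'|}(\Delta')^+\Delta^+$.

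The order preservation in (2) is where the argument requires an induction, and I expect this to be the main obstacle because it relies on having all the previous parts in place. I would argue by induction on $n$ using the recursive characterisation of order from the definition. The base case $n=0$ reduces to the identity $f^+=f$ for $f\in\ci(\Cc{M})$: graded commutativity of half-densities under the integral gives $\int f\bs\,\bt = (-1)^{|f||\bs|}\int\bs\,(f\bt)$, which matches the defining relation. For the inductive step, given $\Delta$ of order $n$, the commutator $[\Delta,f]$ has order $\leq n-1$ for every $f\in\ci(\Cc{M})$ by definition. Applying (4) together with $f^+=f$ and the already-established degree preservation produces, after a short Koszul-sign computation, the identity $[\Delta,f]^+ = -[\Delta^+,f]$. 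The inductive hypothesis then says $[\Delta,f]^+$ has order $\leq n-1$, so $[\Delta^+,f]$ does too; since this holds for every zeroth-order $f$, the recursive definition of order forces $\Delta^+\in\DO^{\leq n}(\B)$. The delicate point throughout is that the sign identities in (3), (4), and the inductive step must be tracked consistently; in particular the derivation of $[\Delta,f]^+ = -[\Delta^+,f]$ is where an incorrect sign convention would propagate and break the induction.
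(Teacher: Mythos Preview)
Your argument is correct and is precisely the ``direct verification'' the paper has in mind: the paper states this lemma without proof, remarking only that it ``can be verified directly,'' so there is no published proof to compare against. Your sign computations for (3) and (4) are accurate, and the induction on order via $[\Delta,f]^+ = -[\Delta^+,f]$ is the standard way to handle (2); to conclude that the order is \emph{exactly} $n$ (rather than $\leq n$) you can simply invoke the involution (3), which forces equality.
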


\begin{lemma}
\label{lem:example2.5}
Let $\Delta$ be a first order differential operator on $\B$ of 
homogeneous degree,
 described locally by $\Delta = \aDelta^a(x)\p_a + \aDelta_0(x)$. The adjoint operator is given by
	\begin{equation}\label{Section 3 Eqn Adjoint of a first order operator}
	\Delta^+ = -\aDelta^a(x)\p_a - (-1)^{|\p_a||\aDelta^a|}\p_a(\aDelta^a)(x)+\aDelta_0(x).
	\end{equation}
\end{lemma}
\begin{proof}
By  integration by parts, we have 
 $$\langle \p_a\bs,\bt\rangle  =- (-1)^{| \p_a||\bs|}\langle \bs, \p_a \bt\rangle .
$$
It thus follows that 
\begin{equation}
\label{eq:HK1}
\p_a^+=-\p_a .
\end{equation}
On the other hand, 
\begin{equation*}
 \langle  \aDelta^a (x) \cdot \bs,\bt\rangle  = (-1)^{|  \aDelta^a||\bs|}\langle \bs,   \aDelta^a (x) \cdot \bt\rangle .
\end{equation*}
Thus it follows that
 \begin{equation}
\label{eq:HK2}
\aDelta^a(x)^+=\aDelta^a(x) ,
\end{equation}
where, by abuse of notation, $\aDelta^a (x)$ means the multiplication 
operator by $\aDelta^a(x)$.

Therefore, we have
\begin{eqnarray*}
\Delta^+ &=&\big( \aDelta^a(x)\p_a + \aDelta_0(x) \big)^+\\
&=&(\aDelta^a(x)\p_a)^+  + (\aDelta_0(x) )^+ \quad \quad (\mbox{by Lemma } \ref{lem:CDG} (4))\\
&=&(-1)^{|\p_a||\aDelta^a|}\p_a^+\cdot  \aDelta^a(x)^+  +\aDelta_0(x)^+
\quad \quad (\mbox{by Eqs.} \eqref{eq:HK1}-\eqref{eq:HK2})\\
&=&-(-1)^{|\p_a||\aDelta^a|} \p_a \cdot \aDelta^a (x) +\aDelta_0 (x) \\
&=&  -\aDelta^a(x)\p_a - (-1)^{|\p_a||\aDelta^a|}\p_a(\aDelta^a)(x)+\aDelta_0(x).
\end{eqnarray*}
\end{proof}


\begin{definition}
An operator $\Delta\in\DO(\B)$ is called self-adjoint if $\Delta^+ = \Delta$, and anti-self-adjoint if $\Delta^+ = -\Delta$.
\end{definition}

Let $\DO^+(\B)\subset \DO(\B)$ denote the subset
consisting  of all self-adjoint and anti-self-adjoint differential operators on Berezinian half-densities.
Note that although both the space of  self-adjoint differential operators
 and the space of
anti-self-adjoint differential operators are vector
spaces,  $\DO^+(\B)$ is not a vector space.
The sum of a self-adjoint and anti-self-adjoint operator is no longer self or anti-self adjoint.

\begin{lemma}\label{lem:Phi}
For any differential operator $\Delta\in\DO(\B)$  of order $n$,
we have 
\begin{equation}
\label{eq:Phi}
 \sigma_n (\Delta^+ )=(-1)^n \sigma_n  (\Delta ) .
\end{equation}
\end{lemma}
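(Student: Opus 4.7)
My plan is to prove the lemma by induction on the order $n$, reducing the higher-order case to the first-order computation already carried out in the preceding example and using the multiplicativity of principal symbols under composition.

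The base cases are immediate: for $n=0$, a zeroth-order operator is multiplication by an element of $\cinf{\Cc{M}}$, which is self-adjoint by definition of $\langle-,-\rangle$, so $\sigma_0(\Delta^+)=\sigma_0(\Delta)=(-1)^0\sigma_0(\Delta)$. For $n=1$, the formula \eqref{Section 3 Eqn Adjoint of a first order operator} for the adjoint of a first-order operator shows that the top-order coefficient is negated, i.e. $\sigma_1(\Delta^+)=-\sigma_1(\Delta)$, confirming the claim.

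For the inductive step, fix $n\geq 2$ and work in a local chart, so that by \eqref{Section 2 Eqn Local nth orer differential operator} the operator $\Delta$ has the form $\sum_{k=0}^n\tfrac{1}{k!}\Delta^{a_1\cdots a_k}(x)\p_{a_k}\cdots\p_{a_1}$. The leading term equals the composition $\Delta_1\circ\Delta_{n-1}$, modulo operators of order $\leq n-1$, where $\Delta_1$ is a first-order operator carrying the coefficient $\Delta^{a_1\cdots a_n}$ composed with a single $\p_{a_n}$ and $\Delta_{n-1}$ is the $(n-1)$-th order operator $\p_{a_{n-1}}\cdots\p_{a_1}$. Then by the composition formula in Lemma \ref{lem:CDG}(4),
\begin{equation*}
(\Delta_1\circ\Delta_{n-1})^+=(-1)^{|\Delta_1||\Delta_{n-1}|}\Delta_{n-1}^+\circ\Delta_1^+.
\end{equation*}
Applying $\sigma_n$, using the fact that the principal symbol is multiplicative for compositions ($\sigma_n(D\circ D')=\sigma_p(D)\odot\sigma_q(D')$ for $D,D'$ of pure orders $p,q$ with $p+q=n$, a direct consequence of Lemma \ref{Section 2 Prop Principal symbol homomorphism} together with filtration arguments), and invoking the inductive hypothesis on $\Delta_1$ and $\Delta_{n-1}$, one computes
\begin{equation*}
\sigma_n((\Delta_1\circ\Delta_{n-1})^+)=(-1)^{|\Delta_1||\Delta_{n-1}|}(-1)^{n-1}(-1)^1\,\sigma_{n-1}(\Delta_{n-1})\odot\sigma_1(\Delta_1).
\end{equation*}
Graded commutativity of the symmetric product $\odot$ on $\Gamma(\hat S T_{\Cc{M}})$ then contributes a compensating factor $(-1)^{|\Delta_1||\Delta_{n-1}|}$, which cancels the one already present, leaving $(-1)^n\sigma_1(\Delta_1)\odot\sigma_{n-1}(\Delta_{n-1})=(-1)^n\sigma_n(\Delta_1\circ\Delta_{n-1})$. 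Since operators of order $\leq n-1$ contribute nothing to $\sigma_n$ on either side (by Lemma \ref{lem:CDG}(2), their adjoints remain of order $\leq n-1$), the identity $\sigma_n(\Delta^+)=(-1)^n\sigma_n(\Delta)$ follows; the local identity globalizes because both sides are coordinate-independent tensors.

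The main obstacle is bookkeeping: tracking the Koszul signs in the composition formula for adjoints against those from graded commutativity of $\odot$. Once one verifies that these two sign contributions cancel exactly, leaving only the clean factor $(-1)^n$, the argument proceeds cleanly. An alternative that avoids induction is a direct local computation via $n$ successive integrations by parts on the top-order term of \eqref{Section 2 Eqn Local nth orer differential operator}; each integration by parts produces one sign $(-1)$ together with Koszul signs, and after $n$ steps the graded signs recombine symmetrically (thanks to the symmetry of $\Delta^{a_1\cdots a_n}$) into a single $(-1)^n$ multiplying the top-order coefficient, with all extra terms being of order $\leq n-1$.
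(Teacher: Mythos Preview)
Your argument is correct, and your closing ``alternative'' is in fact exactly the route the paper takes: the paper simply observes that $\partial_a^+=-\partial_a$ locally (integration by parts), so the top-order term $\tfrac{1}{n!}\Delta^{a_1\cdots a_n}\partial_{a_n}\cdots\partial_{a_1}$ picks up $(-1)^n$ under the adjoint, with all remaining terms of order $\leq n-1$. That is a two-line proof.

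Your main inductive argument is a genuinely different (and more elaborate) route. It works, but two points are worth cleaning up. First, the multiplicativity of principal symbols under composition, $\sigma_{p+q}(D\circ D')=\sigma_p(D)\odot\sigma_q(D')$, is \emph{not} a consequence of Lemma~\ref{Section 2 Prop Principal symbol homomorphism}, which concerns commutators; it is the separate (and easier) statement that the associated graded of the order filtration is a commutative algebra, and you should cite it as such. Second, your decomposition $\Delta_1\circ\Delta_{n-1}$ is really a sum over the free indices $a_1,\ldots,a_{n-1}$, with the degrees of $\Delta_1$ and $\Delta_{n-1}$ (hence the Koszul signs) varying term by term; the cancellation you describe between the sign from Lemma~\ref{lem:CDG}(4) and the sign from graded commutativity of $\odot$ does hold termwise, but it is worth saying so explicitly. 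What the inductive argument buys is a cleaner conceptual reduction to the first-order case and to the algebraic properties of the adjoint already recorded in Lemma~\ref{lem:CDG}; what the paper's direct computation buys is brevity.
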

\begin{proof}
    Let $(x^a) $ be coordinates on $\Cc{M}$.
Assume that the leading term of
 $\Delta$ is
$\F{1}{n!} \Delta^{a_{1}\cdots{a_n}}(x)\p_{a_n}\ldots\p_{a_1}$.
According to  Lemma \ref{lem:CDG} (4) and
  Eqs. \eqref{eq:HK1}-\eqref{eq:HK2}, we have
        \begin{equation*}
            \Delta^+ = (-1)^n \F{1}{n!}\Delta^{a_{1}\cdots{a_n}}(x)\p_{a_n}\ldots\p_{a_1} + \cdots.
        \end{equation*}
The conclusion  thus follows immediately.
\end{proof}

\begin{corollary}
\label{Section 2 Eqn Adjoint operator is sign to original}
    Let $\Delta\in\DO^+(\B)$ be a differential operator of order $n$. If $\Delta$ is self-adjoint, then $n$ is necessarily even. Likewise if $\Delta$ is anti-self-adjoint, then $n$ is necessarily odd. Hence, we have
\begin{equation*}
            \Delta^+ = (-1)^n\Delta.
        \end{equation*}
\end{corollary}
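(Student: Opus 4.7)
The plan is to deduce this corollary directly from Lemma~\ref{lem:Phi}, exploiting the fact that an operator of order exactly $n$ has, by definition, a nonzero principal symbol $\sigma_n(\Delta)\in\Gamma(S^nT_\Cc{M})$.

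First I would handle the self-adjoint case. Assume $\Delta^+=\Delta$. Applying $\sigma_n$ to both sides and invoking Lemma~\ref{lem:Phi} gives
\begin{equation*}
\sigma_n(\Delta)=\sigma_n(\Delta^+)=(-1)^n\sigma_n(\Delta).
\end{equation*}
Because $\Delta$ has order exactly $n$, the symbol $\sigma_n(\Delta)$ does not vanish, which forces $(-1)^n=1$ and hence $n$ is even. An entirely analogous argument for the anti-self-adjoint case, starting from $\Delta^+=-\Delta$, yields $-\sigma_n(\Delta)=(-1)^n\sigma_n(\Delta)$, so $(-1)^n=-1$ and $n$ is odd.

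Having established the parity assertion in each case, the final formula $\Delta^+=(-1)^n\Delta$ is immediate: in the self-adjoint case $n$ is even so $(-1)^n\Delta=\Delta=\Delta^+$, and in the anti-self-adjoint case $n$ is odd so $(-1)^n\Delta=-\Delta=\Delta^+$. There is no real obstacle here; the only subtlety worth noting is the use of the convention that ``order $n$'' means order exactly $n$ (equivalently, nonvanishing top symbol), which is what makes the cancellation argument $\sigma_n(\Delta)\ne 0\Rightarrow(-1)^n=\pm 1$ legitimate.
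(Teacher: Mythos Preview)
Your proof is correct and follows essentially the same route as the paper's own argument: apply $\sigma_n$ to the (anti-)self-adjointness relation, invoke Lemma~\ref{lem:Phi}, and use $\sigma_n(\Delta)\neq 0$ to pin down the parity of $n$. The paper presents this slightly more tersely, but the logic is identical.
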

\begin{proof}
    Assume that  $\Delta$ is a  self-adjoint operator of order $n$.
Then $\Delta^+=\Delta$. Applying the principal  symbol map,
we have $(-1)^n \sigma_n  (\Delta )=\sigma_n  (\Delta )$ according to
Lemma \ref{lem:Phi}. Since $\sigma_n  (\Delta )\neq 0$,
 thus it follows that $n$ must be even. Similarly
we prove that if $\Delta$ is anti-self-adjoint, then $n$ is necessarily odd.
\end{proof}


\begin{proposition}\label{Section 2 Prop Adjoint ops closed under commutator}
The set $\DO^+(\B)$ is closed under the commutator \eqref{eq:Cambridge}, and 
inherits the increasing filtration
 \eqref{Section 2 Eqn Filtration by order of diff ops} determined by the order.
\end{proposition}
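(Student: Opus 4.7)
The plan is to handle the two claims separately. The filtration claim is almost tautological: since $\DO^+(\B) \subset \DO(\B)$, one simply intersects with the existing filtration to obtain
\begin{equation*}
\DO^{+,\leq 0}(\B) \subset \DO^{+,\leq 1}(\B) \subset \DO^{+,\leq 2}(\B) \subset \cdots, \qquad \DO^{+,\leq n}(\B) := \DO^+(\B) \cap \DO^{\leq n}(\B),
\end{equation*}
and the containment $[\DO^{+,\leq n}(\B), \DO^{+,\leq m}(\B)] \subseteq \DO^{+,\leq n+m-1}(\B)$ will then follow from \eqref{eq:Cambridge} once closure under the commutator is verified.

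For the closure claim, I would proceed by a direct computation of the adjoint of the graded commutator using Lemma~\ref{lem:CDG}. Let $\Delta, \Delta' \in \DO^+(\B)$ be of homogeneous degree. By the definition of $\DO^+(\B)$, there exist signs $\epsilon, \epsilon' \in \{\pm 1\}$ with $\Delta^+ = \epsilon \Delta$ and ${\Delta'}^+ = \epsilon' \Delta'$. Writing the graded commutator as $[\Delta, \Delta'] = \Delta \circ \Delta' - (-1)^{|\Delta||\Delta'|} \Delta' \circ \Delta$ and applying parts (1) and (4) of Lemma~\ref{lem:CDG} to each summand yields
\begin{equation*}
[\Delta, \Delta']^+ = (-1)^{|\Delta||\Delta'|} {\Delta'}^+ \circ \Delta^+ - (-1)^{|\Delta||\Delta'|} (-1)^{|\Delta||\Delta'|}\,\Delta^+ \circ {\Delta'}^+ = -\epsilon \epsilon' [\Delta, \Delta'],
\end{equation*}
after using $\Delta^+ = \epsilon \Delta$, ${\Delta'}^+ = \epsilon' \Delta'$, and regrouping the sign $(-1)^{|\Delta||\Delta'|}$. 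Thus the commutator is anti-self-adjoint when $\epsilon \epsilon' = 1$ (both operators have the same type) and self-adjoint when $\epsilon \epsilon' = -1$ (mixed type), so in all four cases $[\Delta, \Delta'] \in \DO^+(\B)$.

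There is no real obstacle here; the argument is essentially a bookkeeping computation using the involutive and anti-multiplicative properties of the adjoint. The only subtlety to flag is the one noted just before the proposition, namely that $\DO^+(\B)$ is not itself a vector subspace of $\DO(\B)$, so the statement should be interpreted as closure of the \emph{set} under the graded commutator. This subtlety is harmless for our computation since taking adjoints and forming the graded commutator preserve homogeneity and the type (self- vs.\ anti-self-adjoint) in the deterministic manner dictated by the sign $-\epsilon \epsilon'$.
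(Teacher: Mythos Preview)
Your proof is correct and follows essentially the same route as the paper: a direct computation of $[\Delta,\Delta']^+$ via the anti-multiplicativity of the adjoint (Lemma~\ref{lem:CDG}), yielding a definite sign times $[\Delta,\Delta']$. The only cosmetic difference is that the paper first invokes Corollary~\ref{Section 2 Eqn Adjoint operator is sign to original} to write $\epsilon=(-1)^n$ and $\epsilon'=(-1)^m$ in terms of the orders, arriving at $[\Delta,\Delta']^+=(-1)^{n+m-1}[\Delta,\Delta']$, whereas you work directly with abstract signs $\epsilon,\epsilon'$ and obtain $-\epsilon\epsilon'$; your version is slightly cleaner in that it does not need that corollary at all.
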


\begin{proof}
That this set inherits the filtration is evident. Now let $\Delta,\Delta'\in\DO^+(\B)$ be operators of order $n$ and $m$ respectively.   Consider
 the commutator
$[\Delta,\Delta']$. From Lemma \ref{lem:CDG} and Corollary \ref{Section 2 Eqn Adjoint operator is sign to original}, it follows that
	\begin{multline}
[\Delta,\Delta']^+ = \left(\Delta\cdot \Delta' - (-1)^{|\Delta||\Delta'|}\Delta'\cdot \Delta\right)^+ = (-1)^{|\Delta||\Delta'|}\Delta^{'+}\cdot \Delta^+ - \Delta^+\cdot \Delta^{'+}\\ 
= (-1)^{|\Delta||\Delta'|+n+m}\Delta^{'}\cdot \Delta - (-1)^{n+m}\Delta\cdot \Delta^{'}
= (-1)^{n+m-1}[\Delta,\Delta']. \label{eq:FRA1}
	\end{multline}
	Since $[\Delta,\Delta']$ is an operator of order at most $n+m-1$,
it must be either  self-adjoint or anti-self-adjoint
 depending on the parity of $n+m-1$.
\end{proof}

Note that although the set $\DO^+(\B)$  is closed under the commutator, it does not form 
a Lie algebra since it is not a vector space.
 
\begin{corollary}
\label{Section 2 Cor vanishing terms in adjoint operator commutator}
Let $\Delta,\Delta'\in\DO^+(\B)$ be differential operators of order $n$ and
 $m$, respectively. Assume that the order of the commutator $[\Delta,\Delta']$ is strictly less than $n+m-1$. Then $[\Delta,\Delta']$ must be of order strictly less than $n+m-2$ also.
\end{corollary}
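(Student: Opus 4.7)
The plan is a simple parity argument that couples membership in $\DO^+(\B)$ to the parity of the order. First, I would recall the adjoint identity already derived in the proof of Proposition \ref{Section 2 Prop Adjoint ops closed under commutator}, namely Eq. \eqref{eq:FRA1}: for $\Delta,\Delta' \in \DO^+(\B)$ of orders $n$ and $m$, one has
\[
[\Delta,\Delta']^+ \;=\; (-1)^{n+m-1}\,[\Delta,\Delta'].
\]
This identity is insensitive to the actual order of $[\Delta,\Delta']$, so it guarantees that $[\Delta,\Delta']$ itself lies in $\DO^+(\B)$ and is self-adjoint or anti-self-adjoint according to the parity of $n+m-1$.

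Next, I would invoke the parity rigidity recorded in Corollary \ref{Section 2 Eqn Adjoint operator is sign to original}: a nonzero element $\Xi \in \DO^+(\B)$ of order $k$ satisfies $\Xi^+ = (-1)^k \Xi$, so self-adjoint operators have even order and anti-self-adjoint operators have odd order. If $[\Delta,\Delta']$ vanishes, the conclusion is vacuous; otherwise, let $k$ denote its exact order. Comparing the adjoint signs produced by the two results above yields $(-1)^k = (-1)^{n+m-1}$, hence
\[
k \;\equiv\; n+m-1 \pmod{2}.
\]

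Finally, the hypothesis gives $k \leq n+m-2$. Since $n+m-2$ has the opposite parity from $n+m-1$, the congruence above excludes the value $k = n+m-2$, forcing $k \leq n+m-3$, which is the stated strict inequality. The argument involves no genuine obstacle; it is essentially a bookkeeping consequence of Eq. \eqref{eq:FRA1} together with Corollary \ref{Section 2 Eqn Adjoint operator is sign to original}. The only point requiring minor care is to observe that $[\Delta,\Delta']$ automatically inherits membership in $\DO^+(\B)$ from Eq. \eqref{eq:FRA1}, so the parity constraint on its order applies regardless of how much cancellation occurs in the leading symbols.
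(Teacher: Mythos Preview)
Your proposal is correct and follows essentially the same approach as the paper: both invoke the adjoint identity $[\Delta,\Delta']^+ = (-1)^{n+m-1}[\Delta,\Delta']$ from Proposition~\ref{Section 2 Prop Adjoint ops closed under commutator} and then use the parity constraint of Corollary~\ref{Section 2 Eqn Adjoint operator is sign to original} to rule out order $n+m-2$. Your version is slightly more explicit in handling the trivial case $[\Delta,\Delta']=0$ and in phrasing the parity mismatch as a congruence, but the underlying argument is identical.
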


\begin{proof}
A priori, the commutator $[\Delta,\Delta']$ is an operator of order at most
 $n+m-1$. If we assume that those terms of order $n+m-1$ vanish,
 then the commutator must be an operator of order at most $n+m-2$. 
According to Eq. \eqref{eq:FRA1} in the proof of
 Proposition \ref{Section 2 Prop Adjoint ops closed under commutator},
we have
    \begin{equation*}
        [\Delta, \Delta']^+ = (-1)^{n+m-1}[\Delta,\Delta'],
     \end{equation*}
    and therefore we conclude that the terms of order $n+m-2$ must also
vanish,
 since the parity of the order does not coincide with the adjoint condition. Hence $[\Delta,\Delta']$ is an operator of order at most $n+m-3$.
\end{proof}

\begin{proposition}\label{Section 2 Prop Lie derivative are first order asa}
The space of all first order anti-self-adjoint operators is a  Lie algebra,
which is canonically isomorphic to the Lie algebra of vector fields on
 $\Cc{M}$. Specifically, if $\Delta$ is a first order  anti-self-adjoint
 operator of degree $l$, then it is necessarily the Lie derivative  along
 a degree $l$ vector field on $\Cc{M}$.
\end{proposition}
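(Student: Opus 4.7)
The plan is to identify first-order elements of $\DO^+(\B)$ with Lie derivatives on half-densities by a direct local computation, then deduce the Lie algebra structure from the general results of this section. First I would invoke Corollary~\ref{Section 2 Eqn Adjoint operator is sign to original}: any $\Delta\in\DO^+(\B)$ of order $1$ must satisfy $\Delta^+ = -\Delta$, since $1$ is odd. Writing $\Delta = X^a(x)\p_a + X_0(x)$ in a local coordinate chart and applying the explicit adjoint formula \eqref{Section 3 Eqn Adjoint of a first order operator}, the condition $\Delta^+ = -\Delta$ places no constraint on the symbol $X = X^a\p_a$, but forces
\begin{equation*}
X_0 \;=\; \tfrac{1}{2}(-1)^{|a|(|X|+1)}\,\p_a X^a .
\end{equation*}
Thus the zeroth-order part is determined entirely by the first-order part, so $\Delta$ is uniquely determined by its principal symbol $X\in\Gamma(T_\Cc{M})$, which has the same degree $l$ as $\Delta$.

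Next I would compare the expression above with the local formula for the Lie derivative on Berezinian half-densities. Differentiating the transformation law of $\sqrt{\Cc{D}x}$ along the flow of $X$ yields
\begin{equation*}
\Cc{L}_X\bigl(s\sqrt{\Cc{D}x}\bigr) \;=\; \bigl(X^a\p_a s \,+\, \tfrac{1}{2}(\div X)\,s\bigr)\sqrt{\Cc{D}x},
\end{equation*}
where the graded divergence $\div X$ coincides with $(-1)^{|a|(|X|+1)}\p_a X^a$ under our sign conventions. Hence $\Cc{L}_X$ satisfies precisely the anti-self-adjoint condition derived above, has principal symbol $X$, and equals $\Delta$ by the uniqueness just established. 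Conversely, every vector field $X$ of degree $l$ produces a first-order element $\Cc{L}_X\in\DO^+(\B)$ of the same degree, so $X\mapsto \Cc{L}_X$ is a degree-preserving bijection onto $\Gamma(T_\Cc{M})$.

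Finally, to verify the Lie algebra structure, Proposition~\ref{Section 2 Prop Adjoint ops closed under commutator} shows that the commutator $[\Cc{L}_X,\Cc{L}_Y]$ again lies in $\DO^+(\B)$, and from \eqref{eq:Cambridge} its order is at most $1$. By Lemma~\ref{Section 2 Prop Principal symbol homomorphism} the principal symbol of $[\Cc{L}_X,\Cc{L}_Y]$ equals $\allbracket X,Y\arrbracket = [X,Y]$, the usual Lie bracket of vector fields under the canonical Poisson bracket \eqref{eq:Manchester}. Since first-order elements of $\DO^+(\B)$ are determined by their symbols, this forces $[\Cc{L}_X,\Cc{L}_Y] = \Cc{L}_{[X,Y]}$, yielding both closure under commutator and the Lie algebra isomorphism $X\mapsto \Cc{L}_X$. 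The main obstacle throughout is purely bookkeeping of graded signs, in particular checking that the sign produced by the anti-self-adjoint constraint matches the divergence sign appearing in the Lie derivative on half-densities; once that sign matches, every subsequent step is an immediate consequence of the structural results already established.
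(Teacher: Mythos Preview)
Your proposal is correct and takes essentially the same approach as the paper: both impose the anti-self-adjoint condition on the local expression $\Delta = X^a\partial_a + X_0$ via formula~\eqref{Section 3 Eqn Adjoint of a first order operator} and recognize the resulting operator as $\Cc{L}_X$. You additionally spell out the Lie-algebra isomorphism $[\Cc{L}_X,\Cc{L}_Y]=\Cc{L}_{[X,Y]}$ using principal symbols, which the paper's proof leaves implicit.
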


\begin{proof}
In local coordinates $(x^a)$,  any first order
differential operator of homogeneous degree takes the appearance $\Delta = X^a(x)\p_a + X_0(x)$. The adjoint operator $\Delta^+$ is given by Eq. \eqref{Section 3 Eqn Adjoint of a first order operator}.
Imposing the anti-self-adjoint condition $\Delta = -\Delta^+$ forces
	\begin{equation*}
	\Delta = X^a(x)\p_a + (-1)^{|\p_a||X^a|}\F{1}{2}\p_a (X^a)(x) =
 \Cc{L}_X,
	\end{equation*}
which coincides with the formula for the Lie derivative on  half-densities
$\Gamma (\B)$
 along the  vector field $X=X^a(x)\p_a\in\Gamma(T_{\Cc{M}})$ of the same degree.
\end{proof}

\subsection{$(\hbar)$-enhanced  differential operators}

Introduce a degree $0$ parameter $\hbar$, and define the  space  of $\hbar$-enhanced  differential operators $\hbar\DO(\B)$ by 
    \begin{equation*}
	\hbar \DO(\B):= \DO^{\leq 0}(\B)\oplus \hbar \DO^{\leq 1}(\B)\oplus
\cdots = \prod_{n\geq 0}\hbar^n \DO^{\leq n}(\B),
	\end{equation*}
where $\hbar^n\DO^{\leq n}(\B)$ denotes
the space of  differential operators of order
 at most $n$, acting on Berezinian half-densities enhanced with coefficients in  $\RR \hbarr$.

 An arbitrary
 operator $\Delta\in\hbar\dDO$
is  written as
    \begin{equation}\label{Section 2 Eqn General operator in hDO}
    \Delta = \sum_{n\geq 0}\hbar^n\Delta_{n},\qquad \qquad  \Delta_{n}\in\DO^{\leq n}(\B),
    \end{equation}
where the maximum possible order of each operator
 $\Delta_{n}$ with coefficient $\hbar^n$ is $n$.

The introduction of the parameter $\hbar$ allows for the definition of a secondary filtration on $\hbar\dDO$. Let $\Delta\in\hbar\dDO$
 be an arbitrary operator as defined in 
 \eqref{Section 2 Eqn General operator in hDO}, and define the non-negative integer
    \begin{equation}\label{Section 2 Equ Integer t}
    t(\Delta) := \min_{n\geq 0}\Big\{n - \text{order}(\Delta_n)\, \big|\,
\Delta = \sum_{n\geq 0}\hbar^n\Delta_{n}\Big\}.
    \end{equation}
For example, if any one of the operators $\Delta_{n}$ in the expansion of $\Delta$ were of order $n$, then $t(\Delta) = 0$. For each $t \geq 0$,  define the set of operators
    \begin{equation*}
    \hbar\DO_t (\B) := \Big\{\Delta\in\hbar\dDO\, \big|\, t(\Delta) \geq t\Big\}.
    \end{equation*}

In particular, $\hbar\DO_0 (\B)$ is the whole set $\hbar\dDO$,
whilst for an arbitrary integer
 $t>0$, $\hbar\DO_t (\B)$ consists of those operators $\Delta\in \hbar\dDO$ such that $\hbar^{-t} \Delta\in \hbar\DO_0 (\B)$. This defines a decreasing filtration on $\hbar\dDO$ indexed by $t$:
\begin{equation}
\label{eq:hDO}
\cdots \subset \hbar\DO_t (\B) \subset \cdots \subset \hbar\DO_1 (\B)
 \subset \hbar\DO_0 (\B) = \hbar\dDO.
    \end{equation}

It is beneficial to define a commutator modified by the introduced parameter. Set the modified commutator $[-,-]_\hbar$ on $\hbar\dDO$ as the degree $0$ operation
\begin{equation}
\label{eq:BRU}
 \left[\Delta,\Delta'\right]_\hbar:= \frac{1}{\hbar}
[\Delta, \Delta']
=\sum_{n\geq 0} \hbar^n \sum_{ \substack{i+j=n+1,\\ i, j \geq 0}}[\Delta_{i},\Delta_{j}],
\end{equation}
with $\Delta = \sum_{i\geq 0}\hbar^i\Delta_{i}$ and
 $\Delta' = \sum_{j\geq 0}\hbar^j\Delta'_{j}$, and 
$[-, -]$ denotes the usual commutator.

\begin{remark}
The parameter $\hbar$ should serve as a counting variable, relating the order of an operator to the weight
 of the principal symbol as a contravariant tensor. Since the commutator of two operators reduces the order, it is natural to reduce the power of $\hbar$. 
Note that 
in \cite{MR3654355},
$\hbar$ is introduced as a degree $2$ parameter. This shift in degree
 compensates the degree shift between  $(-1)$-shifted derived Poisson 
structures and $(+1)$-shifted derived Poisson structures.
Recently, Shemyakova also  systematically studied
$(\hbar)$-enhanced  differential operators in connection
with the  study of the $\BV_\infty$  operator
 generating higher Koszul brackets on differential forms \cite{MR4235776}.
\end{remark}

The following lemma is quite obvious.

\begin{lemma}\label{Section 2 Lemma Regular ops as degree 0 reg ops}
\begin{enumerate}
\item For any integer
 $t\geq 0$ and $\Delta\in\hbar\DO_t (\B)$, there exists a unique $\Delta'\in\hbar\DO_0 (\B)$ such that $\Delta = \hbar^t\Delta'$.

\item For any integers
 $s, t\geq 0$, the modified commutator respects the $\hbar$-induced filtration
 $$[\hbar\DO_s (\B),\quad \hbar\DO_t (\B)]_\hbar\subset \hbar\DO_{s+t} (\B). $$
\end{enumerate}
\end{lemma}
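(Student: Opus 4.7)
The plan is to unravel both statements by expanding everything against the canonical decomposition $\Delta = \sum_{n\geq 0}\hbar^n\Delta_n$ with $\Delta_n\in\DO^{\leq n}(\B)$, and then applying the filtration property \eqref{eq:Cambridge}.

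For part (1), I would start from the hypothesis $\Delta\in\hbar\DO_t$, i.e.\ $t(\Delta)\geq t$, which by the definition \eqref{Section 2 Equ Integer t} means $\order(\Delta_n)\leq n-t$ for every $n\geq 0$. For $n<t$ this forces $\order(\Delta_n)<0$, so $\Delta_n=0$. Hence only indices $n\geq t$ contribute, and the reindexing $m=n-t$ yields
\[
\Delta \;=\; \sum_{n\geq t}\hbar^n\Delta_n \;=\; \hbar^t\sum_{m\geq 0}\hbar^m\Delta_{m+t}.
\]
Setting $\Delta'_m:=\Delta_{m+t}$ gives an operator of order $\leq m$, so $\Delta':=\sum_m\hbar^m\Delta'_m$ lies in $\hbar\DO_0=\hbar\DO$ and satisfies $\Delta=\hbar^t\Delta'$. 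Uniqueness is immediate because the $\hbar$-adic expansion of any element of $\hbar\DO$ is unique, so the coefficients $\Delta'_m$ are forced.

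For part (2), I would apply part (1) to write $\Delta=\hbar^s\tilde\Delta$ and $\Delta'=\hbar^t\tilde{\Delta'}$ with $\tilde\Delta,\tilde{\Delta'}\in\hbar\DO_0$. Since $\hbar$ is central, the modified commutator \eqref{eq:BRU} becomes
\[
[\Delta,\Delta']_\hbar \;=\; \hbar^{-1}[\hbar^s\tilde\Delta,\hbar^t\tilde{\Delta'}]\;=\;\hbar^{s+t-1}[\tilde\Delta,\tilde{\Delta'}].
\]
It then suffices to show $[\tilde\Delta,\tilde{\Delta'}]\in\hbar\DO_1$. Expanding
\[
[\tilde\Delta,\tilde{\Delta'}] \;=\; \sum_{n\geq 0}\hbar^n\!\!\sum_{i+j=n}[\tilde\Delta_i,\tilde{\Delta'_j}],
\]
the filtration property \eqref{eq:Cambridge} guarantees $[\tilde\Delta_i,\tilde{\Delta'_j}]\in\DO^{\leq i+j-1}(\B)$, so the coefficient of $\hbar^n$ has order $\leq n-1$. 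Thus $t([\tilde\Delta,\tilde{\Delta'}])\geq 1$, and applying part (1) again yields $[\tilde\Delta,\tilde{\Delta'}]=\hbar\,\widetilde{\widetilde\Delta}$ with $\widetilde{\widetilde\Delta}\in\hbar\DO_0$. Substituting back gives $[\Delta,\Delta']_\hbar=\hbar^{s+t}\widetilde{\widetilde\Delta}\in\hbar\DO_{s+t}$, as desired.

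Neither part presents a genuine obstacle: the argument is essentially bookkeeping with the two filtrations (by $\hbar$-degree and by differential-operator order). The only subtlety worth flagging in the write-up is the negative-order vanishing used in part (1), which is what makes $\hbar^{-t}\Delta$ a well-defined element of $\hbar\DO$ rather than merely a formal Laurent expression. Once this is in place, part (2) is a one-line consequence of \eqref{eq:Cambridge}, which is precisely the reason the factor $\hbar^{-1}$ was inserted into the definition \eqref{eq:BRU} of the modified commutator.
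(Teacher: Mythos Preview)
Your argument is correct. The paper actually declares this lemma ``quite obvious'' and gives no proof at all, so there is nothing to compare against; your write-up is exactly the routine verification one would supply, and the key points (the negative-order vanishing forcing $\Delta_n=0$ for $n<t$, and the use of \eqref{eq:Cambridge} to drop one order in the commutator) are handled cleanly.
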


\begin{corollary}
\label{cor:BRU}
The space of $\hbar$-enhanced  differential operators $\hbar\dDO$ is a filtered Lie algebra
under  the modified commutator $[-,-]_\hbar$ and the filtration  \eqref{eq:hDO}.
\end{corollary}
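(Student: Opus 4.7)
The plan is to verify three things: (i) the modified commutator $[-,-]_\hbar$ is well-defined as a map $\hbar\DO \times \hbar\DO \to \hbar\DO$; (ii) it satisfies the graded antisymmetry and Jacobi identities; and (iii) it respects the filtration \eqref{eq:hDO}. The filtration statement is already granted by Lemma~\ref{Section 2 Lemma Regular ops as degree 0 reg ops}(2), so only (i) and (ii) require attention.

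For well-definedness, the subtle point is that $[-,-]_\hbar$ involves formal division by $\hbar$, so one must check that $[\Delta,\Delta']$ is actually divisible by $\hbar$ whenever $\Delta,\Delta'\in\hbar\DO$. Writing $\Delta=\sum_{i\geq 0}\hbar^i\Delta_i$ and $\Delta'=\sum_{j\geq 0}\hbar^j\Delta'_j$ with $\mathrm{order}(\Delta_i)\leq i$ and $\mathrm{order}(\Delta'_j)\leq j$, the $\hbar^0$-coefficient of $[\Delta,\Delta']$ is $[\Delta_0,\Delta'_0]$; since $\Delta_0,\Delta'_0\in\DO^{\leq 0}(\B)\cong \ci(\Cc{M})$ are merely functions, this vanishes. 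Hence $\hbar^{-1}[\Delta,\Delta']$ is a well-defined $\RR\hbarr$-linear combination of differential operators. To check that the result still lies in $\hbar\DO$, I would observe that the $\hbar^n$-coefficient of $[\Delta,\Delta']_\hbar$ is
\[
\sum_{\substack{i+j=n+1\\ i,j\geq 0}}[\Delta_i,\Delta'_j],
\]
and each summand has order at most $i+j-1=n$ by \eqref{eq:Cambridge}. This is precisely the condition defining membership in $\hbar\DO$.

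For the Lie algebra axioms, bilinearity over $\RR\hbarr$ is immediate from the definition. Graded antisymmetry and the graded Jacobi identity are inherited directly from the standard graded commutator on $\DO(\B)$: since $[-,-]_\hbar=\hbar^{-1}[-,-]$ and $\hbar$ commutes with everything (it is a degree $0$ central parameter), the relations $[A,B]=-(-1)^{|A||B|}[B,A]$ and the Jacobi identity transfer verbatim. Formally one expands both sides on homogeneous components and observes that the prefactors of $\hbar$ line up.

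Finally, the filtration property $[\hbar\DO_s,\hbar\DO_t]_\hbar\subset \hbar\DO_{s+t}$ is precisely Lemma~\ref{Section 2 Lemma Regular ops as degree 0 reg ops}(2). No step is genuinely hard here; the only point that requires a moment's thought is the simultaneous verification that the formal division by $\hbar$ is legitimate \emph{and} compatible with the order bound that defines $\hbar\DO$, which is handled uniformly by the order-reducing property \eqref{eq:Cambridge} of the unmodified commutator.
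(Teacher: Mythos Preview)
Your proof is correct and follows the approach the paper implicitly takes: the corollary is stated without proof, as it is meant to be an immediate consequence of Lemma~\ref{Section 2 Lemma Regular ops as degree 0 reg ops}(2) (the case $s=t=0$ already gives well-definedness of $[-,-]_\hbar$ on $\hbar\DO$) together with the fact that the Lie algebra axioms are inherited from the ordinary commutator since $\hbar$ is central. You have simply spelled out these details explicitly, which is fine.
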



\begin{definition}
The extended principal symbol map is defined as
    \begin{equation}
\label{Section 2 Defn eqn Zero extended symbol map definition}
    \sigma_\hbar :\hbar\dDO \rightarrow \Gamma(\hat{S}T_{\Cc{M}})\llbracket\hbar\rrbracket, \qquad \sigma_\hbar(\Delta)
 = \sum_{n\geq 0}\hbar^n\sigma_n(\Delta_{n}),\qquad
    \end{equation}
for  any $\Delta=\sum_{n\geq 0}\hbar^n\Delta_{n}\in \hbar\dDO$,
where $\sigma_n$ is the $n$th principal  symbol map as in
\eqref{eq:sigman}, and
$\Gamma(\hat{S}T_\Cc{M})\llbracket\hbar\rrbracket$ denotes
the space of formal power series in $\hbar$ with coefficients in the 
completed symmetric algebra.
\end{definition}

The following lemma follows immediately from Eq. \eqref{eq:BRU} and
Lemma \ref{Section 2 Prop Principal symbol homomorphism}.

\begin{lemma}
\label{lem:NAP}
For any pair of operators $\Delta, \Delta'\in \hbar\dDO $,
\begin{equation}
\sigma_\hbar \big( \left[\Delta,\Delta'\right]_\hbar\big)=
\F{1}{\hbar}\allbracket  \sigma_\hbar (\Delta), \sigma_\hbar (\Delta')\arrbracket.
\end{equation}
\end{lemma}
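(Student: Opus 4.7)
The plan is to prove the identity by direct expansion of both sides in powers of $\hbar$, using the already established compatibility between the commutator and the Poisson bracket at the level of principal symbols (Lemma \ref{Section 2 Prop Principal symbol homomorphism}).

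First I would write $\Delta = \sum_{i\geq 0} \hbar^i \Delta_{i}$ and $\Delta' = \sum_{j\geq 0} \hbar^j \Delta'_{j}$ with $\Delta_{i}\in\DO^{\leq i}(\B)$ and $\Delta'_{j}\in\DO^{\leq j}(\B)$. By the definition \eqref{eq:BRU} of the modified commutator,
\[
[\Delta,\Delta']_\hbar \;=\; \sum_{n\geq 0} \hbar^{n} \!\!\sum_{\substack{i+j=n+1\\ i,j\geq 0}}\!\! [\Delta_{i},\Delta'_{j}].
\]
By the filtration property \eqref{eq:Cambridge}, each commutator $[\Delta_{i},\Delta'_{j}]$ lies in $\DO^{\leq i+j-1}(\B) = \DO^{\leq n}(\B)$, so the coefficient of $\hbar^{n}$ on the right is a differential operator of order at most $n$, confirming that $[\Delta,\Delta']_\hbar$ genuinely lies in $\hbar\DO$ (this is Lemma \ref{Section 2 Lemma Regular ops as degree 0 reg ops} specialized to $s=t=0$) and that $\sigma_\hbar$ can be applied termwise.

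Next I would apply the extended symbol map \eqref{Section 2 Defn eqn Zero extended symbol map definition} and invoke Lemma \ref{Section 2 Prop Principal symbol homomorphism} to each bracket: for $i+j=n+1$, $\sigma_{n}\bigl([\Delta_{i},\Delta'_{j}]\bigr) = \allbracket \sigma_{i}(\Delta_{i}),\sigma_{j}(\Delta'_{j})\arrbracket$. This yields
\[
\sigma_\hbar\bigl([\Delta,\Delta']_\hbar\bigr) \;=\; \sum_{n\geq 0}\hbar^{n}\!\!\sum_{\substack{i+j=n+1\\ i,j\geq 0}}\!\!\allbracket \sigma_{i}(\Delta_{i}),\sigma_{j}(\Delta'_{j})\arrbracket.
\]
On the other hand, the bilinearity of the Poisson bracket \eqref{eq:Manchester} gives
\[
\tfrac{1}{\hbar}\allbracket \sigma_\hbar(\Delta),\sigma_\hbar(\Delta')\arrbracket \;=\; \tfrac{1}{\hbar}\sum_{i,j\geq 0}\hbar^{i+j}\allbracket \sigma_{i}(\Delta_{i}),\sigma_{j}(\Delta'_{j})\arrbracket \;=\; \sum_{n\geq 0}\hbar^{n}\!\!\sum_{i+j=n+1}\!\!\allbracket \sigma_{i}(\Delta_{i}),\sigma_{j}(\Delta'_{j})\arrbracket,
\]
after reindexing with $n=i+j-1$. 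The two expressions coincide termwise, yielding the claim.

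The only potential subtlety to check is that the division by $\hbar$ on the right-hand side is well-defined as an element of $\Gamma(\hat{S}T_\Cc{M})\llbracket\hbar\rrbracket$; this follows because the $\hbar^{0}$-term of $\allbracket\sigma_\hbar(\Delta),\sigma_\hbar(\Delta')\arrbracket$ is $\allbracket\sigma_{0}(\Delta_{0}),\sigma_{0}(\Delta'_{0})\arrbracket$, a Poisson bracket of two weight-zero elements (functions on $\Cc{M}$), which vanishes by \eqref{eq:Manchester}. Beyond this bookkeeping, no obstacle arises: the proof is essentially a termwise verification, and the hardest aspect is simply keeping careful track of indices when matching the two sums.
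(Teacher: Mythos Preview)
Your proof is correct and follows exactly the approach the paper indicates: the paper simply states that the lemma ``follows immediately from Eq.~\eqref{eq:BRU} and Lemma~\ref{Section 2 Prop Principal symbol homomorphism},'' and your argument is the explicit termwise unpacking of that claim. Your additional remark on the well-definedness of the division by $\hbar$ is a nice bookkeeping check that the paper leaves implicit.
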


Notice that for any operator $\Delta = \sum_{n\geq0}\hbar^n\Delta_n$, the 
extended principal  symbol map is  non-zero only on those terms
 where the order of the operator $\Delta_{n}$ is exactly equal to $n$,
 and hence cannot see symbols of operators of lower orders. To remedy this,
 one can define a sequence of maps, viewing $\hbar\dDO$ as $\hbar\DO_0 (\B)$
 in the decreasing $\hbar$-induced filtration 
 \eqref{eq:hDO}, which extend to each $\hbar\DO_t (\B)$ in turn. For each $t\geq 1$, define
 the $t$-th extended principal symbol map
    \begin{equation}\label{Section 2 Defn nth extended symbol maps}
    \sigma_\hbar^{t}:\hbar\DO_t (\B)\rightarrow\Gamma(\hat{S}T_\Cc{M})\llbracket\hbar\rrbracket,\qquad \sigma_\hbar^{t}(\Delta) = \hbar^{t}\sigma_\hbar(\hbar^{-t}\Delta),
    \end{equation}
for $\Delta\in\hbar\DO_t (\B)$. The extended principal symbol map 
 \eqref{Section 2 Defn eqn Zero extended symbol map definition}
 fits into this sequence with $t = 0$.


\begin{corollary}
For operators $\Delta\in\hbar\DO_t (\B)$, and
 $\Delta'\in\hbar\DO_s (\B)$, the sequence of extended symbol maps preserve the modified commutator
    \begin{equation}\label{Section 2 Eqn Extended symbol Lie algebra property}
    \sigma^{t+s}_\hbar\big([\Delta, \Delta']_\hbar\big) = \F{1}{\hbar}\allbracket \sigma^{t}_\hbar(\Delta),\sigma^{s}_\hbar( \Delta')\arrbracket.
    \end{equation}
\end{corollary}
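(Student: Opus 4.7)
The plan is to reduce the general case to the $t=s=0$ case already treated in Lemma \ref{lem:NAP} by exploiting the rescaling provided by Lemma \ref{Section 2 Lemma Regular ops as degree 0 reg ops}(1). Specifically, I would first use that lemma to write $\Delta = \hbar^t \tilde\Delta$ and $\Delta' = \hbar^s \tilde\Delta'$ for unique $\tilde\Delta, \tilde\Delta'\in\hbar\DO_0$. By the definition \eqref{Section 2 Defn nth extended symbol maps} of the extended symbol maps, this gives
\begin{equation*}
\sigma^t_\hbar(\Delta) = \hbar^t \sigma_\hbar(\tilde\Delta), \qquad \sigma^s_\hbar(\Delta') = \hbar^s \sigma_\hbar(\tilde\Delta').
\end{equation*}

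Next I would verify that the modified commutator scales in the expected way, namely
\begin{equation*}
[\Delta,\Delta']_\hbar = \tfrac{1}{\hbar}[\hbar^t\tilde\Delta,\hbar^s\tilde\Delta'] = \hbar^{t+s-1}[\tilde\Delta,\tilde\Delta'] = \hbar^{t+s}\,[\tilde\Delta,\tilde\Delta']_\hbar,
\end{equation*}
which in particular confirms (consistent with Lemma \ref{Section 2 Lemma Regular ops as degree 0 reg ops}(2)) that $[\Delta,\Delta']_\hbar$ lies in $\hbar\DO_{s+t}$, so that $\sigma^{t+s}_\hbar$ is defined on it. Applying \eqref{Section 2 Defn nth extended symbol maps} again yields
\begin{equation*}
\sigma^{t+s}_\hbar\big([\Delta,\Delta']_\hbar\big) = \hbar^{t+s}\,\sigma_\hbar\big([\tilde\Delta,\tilde\Delta']_\hbar\big).
\end{equation*}

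Finally, I would invoke Lemma \ref{lem:NAP} applied to $\tilde\Delta,\tilde\Delta'\in\hbar\DO_0$ to rewrite the right-hand side as $\hbar^{t+s-1}\allbracket \sigma_\hbar(\tilde\Delta),\sigma_\hbar(\tilde\Delta')\arrbracket$, and compare with
\begin{equation*}
\tfrac{1}{\hbar}\allbracket \sigma^t_\hbar(\Delta),\sigma^s_\hbar(\Delta')\arrbracket = \tfrac{1}{\hbar}\allbracket \hbar^t\sigma_\hbar(\tilde\Delta),\hbar^s\sigma_\hbar(\tilde\Delta')\arrbracket = \hbar^{t+s-1}\allbracket \sigma_\hbar(\tilde\Delta),\sigma_\hbar(\tilde\Delta')\arrbracket,
\end{equation*}
which yields precisely the desired identity. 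There is essentially no obstacle here beyond bookkeeping of $\hbar$-powers: the filtration \eqref{eq:hDO} is designed so that the factor of $\hbar^{-1}$ in the modified bracket exactly cancels the order-reducing property \eqref{eq:Cambridge} of the usual commutator, and the extended symbol maps $\sigma^t_\hbar$ are defined precisely so as to be $\hbar^t$-equivariant with respect to the rescaling $\hbar^t\tilde\Delta \leftrightarrow \tilde\Delta$. The only point requiring a moment of care is checking that scaling both arguments by powers of $\hbar$ before taking the bracket commutes, up to the appropriate factor, with the modified commutator — which is immediate from its definition \eqref{eq:BRU}.
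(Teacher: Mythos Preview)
Your proposal is correct and is exactly the argument the paper has in mind: the corollary is stated without proof immediately after Lemma~\ref{lem:NAP}, and your reduction to that lemma via the rescaling $\Delta=\hbar^t\tilde\Delta$, $\Delta'=\hbar^s\tilde\Delta'$ from Lemma~\ref{Section 2 Lemma Regular ops as degree 0 reg ops}(1) and the definition~\eqref{Section 2 Defn nth extended symbol maps} is precisely the intended bookkeeping.
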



\begin{definition}
\label{Section 2 Defn Adjoint operator}
An operator $\Delta =\sum_{n\geq 0}\hbar^n\Delta_{n}\in\hbar\dDO$ is called
self-adjoint if $\Delta_n$ is self-adjoint for all even $n$, and anti-self-adjoint for all odd $n$.
\end{definition}

\begin{remark}
If we introduce an adjoint operation on the formal parameter $\hbar$ such
that $\hbar^+=-\hbar$, then the self-adjointness of
 an operator $\Delta\in \hbar\dDO$ can be simply  written as
$$\Delta^+=\Delta. $$
\end{remark}

Denote the set of all $\hbar$-enhanced
self-adjoint operators by $\hbar\DO^+ (\B)\subset\hbar\dDO$, which inherits both
 filtrations 
 \eqref{Section 2 Eqn Filtration by order of diff ops} defined by order,
 and   \eqref{eq:hDO} defined by the introduction of $\hbar$.

\begin{lemma}
\label{lem:NorthWales}
The space of self-adjoint differential operators $\hbar\DO^+ (\B)$ is a
 Lie subalgebra of $\hbar\dDO$ under the modified commutator \eqref{eq:BRU}.
\end{lemma}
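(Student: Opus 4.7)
My plan has two parts: first, show that $\hbar\DO^+$ is a vector subspace of $\hbar\DO$, and second, verify closure under the modified commutator $[-,-]_\hbar$. Part one is essentially linearity: by Lemma \ref{lem:CDG}(1), taking the adjoint is linear, so for each fixed $n$ the condition $\Delta_n^+ = (-1)^n \Delta_n$ (which is equivalent to Definition \ref{Section 2 Defn Adjoint operator} using the computation in the subsequent Remark with $\hbar^+ = -\hbar$) is linear. Hence arbitrary linear combinations of self-adjoint $\hbar$-enhanced operators remain self-adjoint.

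For part two, let $\Delta = \sum_{i\geq 0} \hbar^i \Delta_i$ and $\Delta' = \sum_{j\geq 0} \hbar^j \Delta'_j$ be elements of $\hbar\DO^+$, so $\Delta_i^+ = (-1)^i \Delta_i$ and $\Delta'_j{}^+ = (-1)^j \Delta'_j$. By Eq. \eqref{eq:BRU}, the $\hbar^n$-coefficient of $[\Delta, \Delta']_\hbar$ is
$$
C_n \;=\; \sum_{\substack{i+j=n+1\\ i,j\geq 0}} [\Delta_i, \Delta'_j],
$$
and $C_n$ lies in $\DO^{\leq n}$ by \eqref{eq:Cambridge}, confirming membership in $\hbar\DO$. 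I then compute the adjoint of each summand. Using Lemma \ref{lem:CDG}(4), the graded commutator satisfies $[a,b]^+ = -[a^+, b^+]$ (the same sign tally as in the derivation of Eq. \eqref{eq:FRA1}). Applying this with $a = \Delta_i$, $b = \Delta'_j$ and the self-adjointness relations gives
$$
[\Delta_i, \Delta'_j]^+ \;=\; -[\Delta_i^+, \Delta'_j{}^+] \;=\; -(-1)^{i+j}[\Delta_i, \Delta'_j] \;=\; (-1)^{n}[\Delta_i, \Delta'_j],
$$
since $i+j-1=n$. Summing over the admissible pairs yields $C_n^+ = (-1)^n C_n$, exactly the self-adjointness condition demanded by Definition \ref{Section 2 Defn Adjoint operator} at level $n$. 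Therefore $[\Delta, \Delta']_\hbar \in \hbar\DO^+$.

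There is essentially no obstacle: the content of the lemma is bookkeeping with Koszul signs, mirroring the proof of Proposition \ref{Section 2 Prop Adjoint ops closed under commutator} but indexed over the $\hbar$-grading. The only subtlety is recognizing that the factor of $\hbar^{-1}$ in the definition of $[-,-]_\hbar$ cancels precisely the drop in order by one, which is what realigns the adjoint parity correctly, so that $C_n$ (order at most $n$) has the same $(-1)^n$ sign under adjoint as required. Equivalently, one may observe that extending the adjoint by $\hbar^+ = -\hbar$ makes the identity $([\Delta,\Delta']_\hbar)^+ = [\Delta^+, \Delta'{}^+]_\hbar$ transparent, and then apply it directly to $\Delta^+ = \Delta$, $\Delta'{}^+ = \Delta'$.
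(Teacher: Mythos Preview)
Your proof is correct and follows essentially the same approach as the paper's: both establish the vector-space property from linearity of the adjoint, then write the $\hbar^n$-coefficient of $[\Delta,\Delta']_\hbar$ as $\sum_{i+j=n+1}[\Delta_i,\Delta'_j]$ and verify $(-1)^n$-self-adjointness term-by-term via the same sign computation as in Eq.~\eqref{eq:FRA1}. Your explicit identity $[a,b]^+=-[a^+,b^+]$ and the closing remark about $\hbar^+=-\hbar$ add a little clarity, but the argument is otherwise identical.
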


\begin{proof}
The space of self-adjoint operators is clearly
a vector space.
 Assume  that $\Delta = \sum_{n\geq 0}\hbar^n\Delta_{n}$ and
 $\Delta' = \sum_{n\geq 0}\hbar^n\Delta'_{n}$ are
self-adjoint differential operators.
By Eq. \eqref{eq:BRU},
 $\left[\Delta,\Delta'\right]_\hbar=
\sum_{n\geq 0} \hbar^n \widetilde{\Delta}_n$,
where  $\widetilde{\Delta}_n= \sum_{i+j=n+1, i, j \geq 0}[\Delta_{i},\Delta_{j}] \in \DO^{\leq n} (\B)$. 
By assumption, for all $n\geq 0$,
$\Delta_{n}$ and $\Delta'_{n}$ are operators
satisfying $\Delta^+_{n} =(-1)^n \Delta_{n} $ and
$(\Delta'_n)^+ =(-1)^n \Delta'_n$. It thus follows
that
$$\widetilde{\Delta}^+_n = \sum_{ i+j=n+1, i, j \geq 0}
[\Delta_{i},\Delta_{j}]^+= \sum_{ i+j=n+1, i, j \geq 0}(-1)^n [\Delta_{i},\Delta_{j}]
=(-1)^n \widetilde{\Delta}_n.$$
Here we used  essentially the same computation as in
 \eqref{eq:FRA1} to prove that $[\Delta_{i},\Delta_{j}]^+
=(-1)^n [\Delta_{i},\Delta_{j}]$.
Therefore $\left[\Delta,\Delta'\right]_\hbar\in  \hbar\DO^+ (\B)$.
Hence, $\hbar\DO^+ (\B)$ is indeed a Lie subalgebra of $\hbar\dDO$.
%
%
\end{proof}

\begin{lemma}
\label{lem:Bari}
Let $\Delta\in\hbar\DO^+_0 (\B)$. Then  $\sigma_\hbar (\Delta)=0$  is equivalent
to that  $\Delta\in\hbar\DO_2^+ (\B)$.
\end{lemma}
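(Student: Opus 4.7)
The reverse implication is immediate from the definitions: if $\Delta=\sum_{n\geq 0}\hbar^n\Delta_n\in\hbar\DO_2^+$, then by \eqref{Section 2 Equ Integer t} each $\Delta_n$ has order at most $n-2$, which is strictly less than $n$, so $\sigma_n(\Delta_n)=0$ for every $n\geq 0$ and hence $\sigma_\hbar(\Delta)=0$. The content of the lemma is the forward direction, where the vanishing of $\sigma_\hbar(\Delta)$ only provides a drop of one in order and one has to use self-adjointness to gain the second drop.

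My plan for the forward direction is as follows. Assume $\sigma_\hbar(\Delta)=0$. By the definition \eqref{Section 2 Defn eqn Zero extended symbol map definition} of $\sigma_\hbar$, this means $\sigma_n(\Delta_n)=0$ for every $n\geq 0$, so each $\Delta_n$ has order at most $n-1$; in particular $\Delta\in\hbar\DO_1^+$. It remains to show that the $(n-1)$-st principal symbol of $\Delta_n$ also vanishes, for every $n$.

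Fix $n\geq 1$ and let $r:=\operatorname{order}(\Delta_n)$; I may as well assume $r=n-1$, since otherwise there is nothing to prove. Because $\Delta\in\hbar\DO^+$, Definition \ref{Section 2 Defn Adjoint operator} gives $\Delta_n^+=(-1)^n\Delta_n$, and applying $\sigma_{n-1}$ yields
\begin{equation*}
\sigma_{n-1}(\Delta_n^+) \;=\; (-1)^n\,\sigma_{n-1}(\Delta_n).
\end{equation*}
On the other hand, Lemma \ref{lem:Phi} applied to the order-$(n-1)$ operator $\Delta_n$ gives
\begin{equation*}
\sigma_{n-1}(\Delta_n^+) \;=\; (-1)^{n-1}\,\sigma_{n-1}(\Delta_n).
\end{equation*}
Comparing the two expressions forces $2\,\sigma_{n-1}(\Delta_n)=0$, i.e. $\sigma_{n-1}(\Delta_n)=0$, contradicting $r=n-1$. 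Therefore $\operatorname{order}(\Delta_n)\leq n-2$ for all $n$, which is precisely the condition $\Delta\in\hbar\DO_2$. Since self-adjointness was assumed, we conclude $\Delta\in\hbar\DO_2^+$.

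There is no real obstacle here: the argument is a one-step parity/order bookkeeping exercise, whose only subtlety is that the adjoint acts on the $n$-th slot of an $\hbar$-enhanced operator with sign $(-1)^n$, while Lemma \ref{lem:Phi} produces the sign $(-1)^{\text{order}}$, and the mismatch of these parities when $\text{order}=n-1$ is exactly what drives the symbol in that degree to vanish. The same parity mechanism already underlies Corollary \ref{Section 2 Eqn Adjoint operator is sign to original} and Corollary \ref{Section 2 Cor vanishing terms in adjoint operator commutator}, so the lemma can also be viewed as the $\hbar$-enhanced counterpart of the latter.
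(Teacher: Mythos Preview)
Your argument is correct and follows essentially the same route as the paper: from $\sigma_\hbar(\Delta)=0$ deduce $\operatorname{order}(\Delta_n)\leq n-1$, then use the parity clash between the adjoint condition $\Delta_n^+=(-1)^n\Delta_n$ and the order-$(n-1)$ sign from Lemma~\ref{lem:Phi} to rule out order exactly $n-1$. The paper phrases the second step via Corollary~\ref{Section 2 Eqn Adjoint operator is sign to original} rather than invoking Lemma~\ref{lem:Phi} directly, but the mechanism is identical.
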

\begin{proof}
Let  $\Delta = \sum_{n\geq 0}\hbar^n\Delta_{n}$. Since
 $\sigma_\hbar (\Delta)=0$, it follows
from  \eqref{Section 2 Defn eqn Zero extended symbol map definition}
that $\sigma_n (\Delta_{n})=0$, for all $n\geq 0$.
Hence the order of $\Delta_{n}$ is at most $(n-1)$.  By assumption
$\Delta_{n} \in\DO^+(\B)$  and $\Delta^+_{n}=(-1)^n \Delta_{n}$,
so $\Delta_{n}$ cannot be of order $(n-1)$ according to
 Corollary  \ref{Section 2 Eqn Adjoint operator is sign to original},
and thus at most $n-2$. 
Hence it follows that $\Delta$  indeed  belongs to $\hbar\DO_2^+ (\B)$.
The converse is obvious.
\end{proof}

\begin{corollary}
\label{Section 2 Cor Difference of two adjoint operators}
Let $\Delta$ and $\Delta'\in\hbar\DO^+_t (\B)$ be self-adjoint 
 operators with $t(\Delta) = t = t(\Delta')$  such that
 $\sigma^{t}_\hbar(\Delta) = \sigma^{t}_\hbar(\Delta')$. Then $\Delta-\Delta'\in\hbar\DO^+_{t+2} (\B) $.
\end{corollary}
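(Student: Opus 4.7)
The plan is to reduce the statement to Lemma~\ref{lem:Bari} by factoring out $\hbar^t$. First I would observe that $\hbar\DO^+$ is a vector space (since self-adjointness is a linear condition on each $\hbar$-coefficient, by Definition~\ref{Section 2 Defn Adjoint operator}) and that $\hbar\DO_t$ is manifestly a vector space, so their intersection $\hbar\DO^+_t$ contains $\Delta-\Delta'$. Linearity of $\sigma^t_\hbar$, evident from its definition \eqref{Section 2 Defn nth extended symbol maps}, then yields $\sigma^t_\hbar(\Delta-\Delta')=0$.

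Next I would apply Lemma~\ref{Section 2 Lemma Regular ops as degree 0 reg ops} to write $\Delta-\Delta'=\hbar^t\Xi$ uniquely, with $\Xi\in\hbar\DO=\hbar\DO_0$. From $\sigma^t_\hbar(\Delta-\Delta')=\hbar^t\sigma_\hbar(\Xi)$ one extracts $\sigma_\hbar(\Xi)=0$. Provided $\Xi$ is itself self-adjoint, Lemma~\ref{lem:Bari} applied to $\Xi\in\hbar\DO^+_0$ gives $\Xi\in\hbar\DO^+_2$, whence $\Delta-\Delta'=\hbar^t\Xi\in\hbar\DO^+_{t+2}$, which is the desired conclusion.

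The one delicate point is showing that $\Xi$ is self-adjoint. Writing $\Xi=\sum_n\hbar^n\Xi_n$ with $\Xi_n=(\Delta-\Delta')_{n+t}$, the self-adjointness of $\Delta-\Delta'$ forces $\Xi_n^+=(-1)^{n+t}\Xi_n$, so $\Xi$ is self-adjoint in the sense of Definition~\ref{Section 2 Defn Adjoint operator} precisely when $t$ is even. The hypothesis $t(\Delta)=t=t(\Delta')$ supplies exactly this: for any self-adjoint element of $\hbar\DO^+$, each nonzero component $\Delta_n$ has order of the same parity as $n$ by Corollary~\ref{Section 2 Eqn Adjoint operator is sign to original}, so $n-\order(\Delta_n)$ is always even, forcing $t(\Delta)$ to be even. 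With this parity observation in place, the reduction to Lemma~\ref{lem:Bari} goes through and the proof is complete.
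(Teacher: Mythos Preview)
Your proof is correct and follows the same strategy as the paper's: divide by $\hbar^t$ and invoke Lemma~\ref{lem:Bari}. In fact you are more careful than the paper, which applies Lemma~\ref{lem:Bari} to $\hbar^{-t}(\Delta-\Delta')$ without explicitly verifying self-adjointness; your parity argument (that $t(\Delta)$ is necessarily even for self-adjoint $\Delta$, via Corollary~\ref{Section 2 Eqn Adjoint operator is sign to original}) supplies precisely that justification and explains why the hypothesis $t(\Delta)=t=t(\Delta')$, rather than merely $\Delta,\Delta'\in\hbar\DO^+_t$, appears in the statement.
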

\begin{proof}
By assumption, we have  $\sigma_\hbar \big( \F{1}{\hbar^t} (\Delta-\Delta')\big)
=0$. According to Lemma \ref{lem:Bari}, $ \F{1}{\hbar^t} 
(\Delta-\Delta')\in \hbar\DO_2^+ (\B)$. Hence $\Delta-\Delta'\in\hbar\DO^+_{t+2}
(\B)$.
\end{proof}


\subsection{$\BV_\infty$-operators and $(-1)$-shifted derived Poisson
 manifolds}

Following Kravchenko \cite{MR1764440}, we introduce the following

\begin{definition}\label{Section 2 Defn BV infinity operator}
A $\BV_\infty$-operator acting on the line bundle $\B$ is a degree $(+1)$
 differential operator $\Delta\in\hbar\DO_0 (\B)$ such that $\sigma_\hbar(\Delta)\neq 0$, and:
\begin{enumerate}
  \item $\lim_{\hbar\rightarrow 0}\Delta = 0$,
  \item $\Delta^2 = \F{\hbar}{2}[\Delta,\Delta]_\hbar = 0$.
\end{enumerate}
\end{definition}
\noindent Condition $(1)$ ensures that $\Delta$ can be expressed as
    \begin{equation*}
    \Delta = \sum_{n\geq 1}\hbar^n\Delta_n,
    \end{equation*}
where each $\Delta_n\in\DO^{\leq n} (\B)$ is a differential operator of order
at most  $n$.

\begin{remark}
Choose  a nowhere vanishing   Berezinian density
 $\brho\in\Gamma(\Ber_\Cc{M})$. With such a choice of Berezinian density,
we may identify differential operators on $\Gamma(\B)$ with differential operators on $\ci(\Cc{M})$ by
    \begin{equation}
\label{eq:Rome}
    \Delta_{\brho}f:=\F{1}{\sqrt{\brho}}\Delta(f\sqrt{\brho}),\qquad f\in\ci(\Cc{M}),\qquad \Delta\in\DO(\B).
    \end{equation}
In this case,  our $\BV_\infty$-operators coincide with the homotopy
 $\BV$-operators of Kravchenko \cite{MR1764440}. See also
\cite{MR4009180, MR3508617, MR3654361, MR3235797, MR2163405, MR2757715, MR3654355, MR4235776, https://doi.org/10.48550/arxiv.2012.14812}.

Since the  Berezinian density line bundle over $\cM$ is
always trivalizable,  in fact, one can always
  work with differential operators
on $\ci(\Cc{M})$ instead of those  on half densities $\Gamma(\B)$. Here,
however, we choose to work with the
latter following \cite{MR2063265, MR3307151, MR2070054, MR2412311}.
  This   approach has the  advantage of being
intrinsic and allows us to speak about the (formal) adjoint
operator of a differential operator naturally, which
plays a crucial role in our construction.  We refer
the interested readers to \cite{MR2063265}
for details on justification regarding the  half densities  approach  as the proper framework
for the Batalin-Vilkovisky (BV) formalism in the classical case.
\end{remark}

For any  $\bX\in\Gamma(\hat{S}T_\Cc{M})$, where $\bX=\sum_{n\geq0}\bX_n$ with
$\bX_n \in \Gamma({S}^nT_\Cc{M})$, define the map
    \begin{equation}\label{Section 2 Eqn hbar map on tensor fields}
        \cdot_\hbar:\Gamma(\hat{S}T_\Cc{M})\rightarrow \Gamma(\hat{S}T_\Cc{M})\hbarr,\qquad \cdot_\hbar(\bX) = \bX_\hbar :=\sum_{n\geq 0}\hbar^n\bX_n.
    \end{equation}
The degree zero Poisson bracket  \eqref{eq:Manchester} on
 $\Gamma(\hat{S}T_\Cc{M})$ then extends to the algebra of \formal  series  $\Gamma(\hat{S}T_\Cc{M})\hbarr$ by
$\hbar$-linearity. The following lemma can be easily verified directly.

\begin{lemma}\label{Section 2 Lemma Bracket of hbar elements}
For any $\bX,\bY, \bZ\in \Gamma(\hat{S}T_\Cc{M})$,
    \begin{equation*}
    \bZ = \allbracket \bX,\bY\arrbracket\quad \Leftrightarrow
\quad \bZ_\hbar = \F{1}{\hbar}\allbracket \bX_\hbar, \bY_\hbar\arrbracket.
    \end{equation*}
\end{lemma}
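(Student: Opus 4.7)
My plan is to reduce the identity to a bookkeeping statement about weights in the symmetric algebra, using the fact that the canonical Poisson bracket on $\Gamma(\hat{S}T_\Cc{M})$ lowers total weight by exactly one. Concretely, inspecting formula \eqref{eq:Manchester} shows that if $\bX_n\in\Gamma(S^n T_\Cc{M})$ and $\bY_m\in\Gamma(S^m T_\Cc{M})$, then each summand of $\allbracket \bX_n,\bY_m\arrbracket$ is a symmetric product of $(n-1)+(m-1)+1 = n+m-1$ vector fields, so
\begin{equation*}
\allbracket \bX_n,\bY_m\arrbracket \in \Gamma(S^{n+m-1}T_\Cc{M}).
\end{equation*}
This weight-lowering property is the only structural fact needed; it is precisely what compensates for the $\F{1}{\hbar}$ on the right-hand side.

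Next I would carry out the matching of components. Decompose $\bX = \sum_n \bX_n$ and $\bY = \sum_m \bY_m$; by bilinearity of the bracket, the weight-$k$ component of $\allbracket \bX,\bY\arrbracket$ is
\begin{equation*}
\allbracket \bX,\bY\arrbracket_k \;=\; \sum_{n+m = k+1}\allbracket \bX_n,\bY_m\arrbracket.
\end{equation*}
On the other side, by the $\hbar$-linear extension of the bracket to $\Gamma(\hat{S}T_\Cc{M})\hbarr$ together with the weight formula above,
\begin{equation*}
\allbracket \bX_\hbar,\bY_\hbar\arrbracket \;=\; \sum_{n,m\geq 0}\hbar^{n+m}\,\allbracket \bX_n,\bY_m\arrbracket \;=\; \sum_{k\geq 0}\hbar^{k+1}\sum_{n+m = k+1}\allbracket \bX_n,\bY_m\arrbracket,
\end{equation*}
so dividing by $\hbar$ gives $\F{1}{\hbar}\allbracket \bX_\hbar,\bY_\hbar\arrbracket = \sum_{k\geq 0}\hbar^k\,\allbracket \bX,\bY\arrbracket_k = \big(\allbracket \bX,\bY\arrbracket\big)_\hbar$.

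The forward direction ($\bZ = \allbracket \bX,\bY\arrbracket \Rightarrow \bZ_\hbar = \F{1}{\hbar}\allbracket \bX_\hbar,\bY_\hbar\arrbracket$) follows immediately by applying the map $\cdot_\hbar$ of \eqref{Section 2 Eqn hbar map on tensor fields}, which is injective. For the converse, since $\cdot_\hbar$ is injective, equality of the two formal series in $\hbar$ implies equality term by term in each weight $k$, yielding $\bZ_k = \allbracket \bX,\bY\arrbracket_k$ for every $k\geq 0$, hence $\bZ = \allbracket \bX,\bY\arrbracket$. I do not anticipate any genuine obstacle: the entire lemma is a direct consequence of (a) the degree/weight grading behavior of the canonical bracket and (b) the definition of $\cdot_\hbar$ as the natural weight-to-$\hbar$-power isomorphism.
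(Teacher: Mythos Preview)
Your proof is correct and is precisely the natural direct verification: the canonical bracket drops weight by one, so the $\hbar$-weighting absorbs the $\tfrac{1}{\hbar}$, and injectivity of $\cdot_\hbar$ gives the converse. The paper in fact states this lemma without proof, remarking only that it amounts to saying that $\cdot_\hbar$ is a Poisson morphism onto its image; your argument supplies exactly the computation the authors left implicit.
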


Essentially, Lemma \ref{Section 2 Lemma Bracket of hbar elements}
indicates that the map  \eqref{Section 2 Eqn hbar map on tensor fields}
 is a morphism of Poisson algebras. Denote
$$ \Gamma(\hat{S}T_\Cc{M})_\hbar :=\{  \bX_\hbar|\bX_\hbar =
\sum_{n\geq 0}\hbar^n \bX_n , \ \text{where } X_n  \in 
 \Gamma({S}^nT_\Cc{M})\} .$$
Then $\Gamma(\hat{S}T_\Cc{M})_\hbar$ is  a Poisson subalgebra of
$\Gamma(\hat{S}T_\Cc{M})\hbarr$ and  the map  \eqref{Section 2 Eqn hbar map on tensor fields}
is an isomorphism of Poisson algebras from
$ \Gamma(\hat{S}T_\Cc{M})$ to $\Gamma(\hat{S}T_\Cc{M})_\hbar$.

The  theorem below indicates that a $\BV_\infty$-operator induces a 
$(-1)$-shifted derived Poisson structure.
This can  be proved using  Voronov's derived
bracket construction \cite{MR2163405}. Below
we will give an alternative proof using Theorem \ref{Section 1 Thm Shifted Poisson structure equiv to formal series}.
Note that this result is known in the literature in
various different forms. See \cite[Theorem 3.2]{MR4235776} and 
\cite[Corollary]{MR3235797},
for instance.

\begin{theorem}\label{Section 2 Theorem BV generates Poisson structure}
If $\Delta\in\hbar\DO_0 (\B)$ is a $\BV_\infty$-operator,
then $C^\infty (\Cc{M})$ together with the sequence  of
multi-brackets  $(\lambda_n)_{n\geq 1}$ defined
by the following equation:
    \begin{equation}
\label{eq:SCE}
    \lambda_n(f_1,\ldots,f_n)\bs:= \lim_{\hbar\rightarrow 0}\left[\cdots\left[\Delta,f_1\right]_\hbar,\ldots,f_n\right]_\hbar\bs,
    \end{equation}
for any functions $f_1,\ldots,f_n\in\ci(\Cc{M})$, and any half-density $\bs\in\Gamma(\B)$,
is a $(+1)$-shifted derived Poisson algebra. Hence $\Cc{M}$ 
 is  a $(-1)$-shifted derived Poisson  manifold.
\end{theorem}

\begin{proof}
Write $\Delta = \sum_{n\geq 1}\hbar^n\Delta_n$, where each
 $\Delta_n\in\DO^{\leq n} (\B)$ is of degree $(+1)$ and order at most $n$.
The extended principal symbol
    \begin{equation*}
\hat{\Pi}_\hbar:=    \sigma_\hbar(\Delta) = \sum_{n\geq 1}\hbar^n \Pi_n,\qquad
\Pi_n= \sigma_n(\Delta_n) \in\Gamma(S^nT_\Cc{M}),
    \end{equation*}
determines a \formal series of degree $(+1)$ symmetric contravariant tensor fields dependent on $\hbar$.
Evaluating at  $\hbar = 1$, we obtain a degree $(+1)$ \formal series
 $\hat{\Pi}:=\hat{\Pi}_\hbar\big|_{\hbar=1}  = \sum_{n\geq 1} {\Pi}_n\in \Gamma(\hat{S}T_\Cc{M})$.

Since the operator $\Delta$ is of degree  $(+1)$,
 its square is expressible via the commutator $\Delta^2 = \F{\hbar}{2}[\Delta,\Delta]_\hbar$. Hence it follows that $[\Delta,\Delta]_\hbar=0$ by definition.

According to Lemma \ref{lem:NAP},
    \begin{equation*}
        \sigma_\hbar\big([\Delta,\Delta]_\hbar\big)=
\F{1}{\hbar}\allbracket  \sigma_\hbar(\Delta), \  \sigma_\hbar(\Delta)\arrbracket
=\F{1}{\hbar}\allbracket \hat{\Pi}_\hbar, \  \hat{\Pi}_\hbar\arrbracket.
    \end{equation*}
From Lemma \ref{Section 2 Lemma Bracket of hbar elements},
 it follows that
$\allbracket \hat{\Pi}, \  \hat{\Pi}\arrbracket=0$.
 One can write $\Pi_1 = Q$, whence $(\Cc{M},Q)$ is a dg manifold equipped with
 a \formal series ${\Pi} = \sum_{n\geq 2}\Pi_n$
satisfying the Maurer-Cartan equation
 \eqref{Section 1 Eqn Maurer-Cartan equation for poisson structure}.
 Hence by Theorem \ref{Section 1 Thm Shifted Poisson structure equiv to formal series}, $\Cc{M}$ is equipped with a  $(-1)$-shifted derived Poisson structure.

To write down the multi-brackets of the corresponding
$(+1)$-shifted derived Poisson algebra,  
we note that the right hand side of \eqref{eq:SCE} is exactly equal to
$\left[\cdots\left[\Delta_n ,f_1\right],\ldots,f_n\right]\bs$.
Since $\Delta_n\in\DO^{\leq n} (\B)$ and $\sigma_n(\Delta_n)=\Pi_n$,
it is standard   \cite{MR2163405}  that the latter is  exactly equal to
$\Pi_n(df_1 \odot \cdots \odot df_n) \bs$.
This concludes the proof of the theorem.
\end{proof}

We note that Eq.  \eqref{eq:SCE}  is essentially Voronov's derived
bracket  formula \cite{MR2163405}.
Recently Bandiera proved the homotopy transfer theorem for
 $\BV_\infty$-algebras \cite{https://doi.org/10.48550/arxiv.2012.14812}.
  In particular, he  showed  that the homotopy transfer of a 
$\BV_\infty$-algebra is compatible with its induced
  $(+1)$-shifted derived Poisson
algebra in the sense of Theorem \ref{Section 2 Theorem BV generates Poisson
 structure}.

\begin{remark}
%
One may compare Eq.  \eqref{eq:SCE} with
 the multi-brackets constructed
 in \cite{MR3654355, MR3235797, MR4235776,  MR3508617, MR3654361}.
As in some of these references  \cite{MR3654355, MR3235797, MR4235776},
we use  the $\hbar$-modification  in order to
absorb   the modified commutator.
Finally, note that to generate a 
 $(+1)$-shifted derived Poisson algebra structure, instead of requiring
$\Delta^2=0$, it suffices to require that 
the extended principal symbol of $\Delta^2$ vanishes,
i.e. $\sigma_\hbar\big( \Delta^2)=0$.
\end{remark}

\begin{example}
Consider the special case of $\BV_\infty$-operators consisting of 
only one term
\begin{equation*}
    \Delta=\hbar^2 \Delta_2,
\end{equation*}
where $\Delta_2$ is a second order differential operator of
 degree $(+1)$ satisfying the condition that $\Delta_2 \cdot  \Delta_2 = 0$.
In this case,  Theorem \ref{Section 2 Theorem BV generates Poisson structure}
implies that $\lambda_n = 0$ for all $n\neq 2$, and the binary bracket $\{-, -\}:=\lambda_2 (-, -)$ is given by,
\begin{multline}
\label{eq:FRA}
\{f, g \} \bs:=\\
\Delta_2 (fg\bs)-(-1)^{(|f|+1)|g|}g \Delta_2
 (f\bs)- (-1)^{|f|} f \Delta_2 (g\bs)+(-1)^{|f|+|g|}fg \Delta_2(\bs),
\end{multline}
for any  $f, g\in C^\infty (\Cc{M})$ of homogeneous degree and $\bs\in
\sections{\B}$.

Assume that there exists a nowhere vanishing
 Berezinian density  $\brho\in\Gamma(\Ber_\Cc{M})$
such that  $\Delta_2 (\sqrt{\brho})=0$. By $\Delta_{\brho}$,
we denote the corresponding second order   differential operator
 on $\ci(\Cc{M})$  as in \eqref{eq:Rome} (with $\Delta$
being replaced by $\Delta_2$), then $\Delta_{\brho}$ is
of degree $(+1)$ and $\Delta_{\brho}^2=0$.
 Eq. \eqref{eq:FRA} implies
that
    \begin{equation}
\label{eq:FLR}
\{f, g\}=  \Delta_{\brho}(fg)-\Delta_{\brho}(f)g-(-1)^{|f|}
f\Delta_{\brho} (g).
\end{equation}
Thus  $\Delta_{\brho}$ is an ordinary
 BV-operator and
$\{\cdot, \cdot\}$ is its corresponding Schouten
bracket. See  \cite{MR837203, MR1256989, MR1952122, MR2070054, MR1906481,  MR1675117, MR2271128}.
\end{example}


\section{Quantization of  $(-1)$-Shifted Derived Poisson Manifolds}

\newcommand{\cqh}{\Cc{Q}_{\hbar}}
\newcommand{\Deltaa}{\hat{\Delta}}
\newcommand{\logeq}{{\mathrel{\raisebox{.66pt}{:}}\Leftrightarrow}}
\newcommand{\MCc}[1]{\MC \left ( #1\right)}
\newcommand{\pbracketh}[2]{\F{1}{\hbar}\allbracket  {#1}, \  {#2}\arrbracket}
\newcommand{\pbracket}[2]{\allbracket  {#1}, \  {#2}\arrbracket}
\newcommand{\lieh}[2]{[#1, \ #2]_\hbar}
\newcommand{\Deltaaa}{{\hat{\Delta'}}}
\newcommand{\Psia}{\Psi}
\newcommand{\argument}{\mathord{\color{black!25}-}}
\newcommand{\gammaa}{\nabla}

\subsection{Quantization of graded manifolds}

Recall that an (affine)
connection on a graded manifold $\Cc{M}$ is a degree zero $\mathbb{R}$-bilinear map $\nabla:\Gamma(T_\Cc{M})\times\Gamma(T_\Cc{M})\rightarrow \Gamma(T_\Cc{M})$ such that
\begin{enumerate}
\item $\nabla_{fX}Z = f\nabla_XZ $;
\item $\nabla_X(fY) = X(f)Y + (-1)^{|f||X|}f\nabla_XY$,
\end{enumerate}
for any  vector fields $X,Y,Z\in\Gamma(T_\Cc{M})$ and smooth functions
 $f,g\in\ci(\Cc{M})$, where $X$ and $f$ are homogeneous.

We say that a connection  $\nabla$ on $\cM$ is torsion-free
if its torsion vanishes. That is,
\[\nabla_X Y -(-1)^{|X| \cdot |Y|} \nabla_Y X - [X,Y]=0 \]
for any homogeneous vector fields $X, Y \in \Gamma(T_\Cc{M})$.
It is well known that  torsion-free connections
always exist for any graded manifold $\Cc{M}$ \cite{MR3910470}.

Fix a torsion-free connection $\nabla$ on $\Cc{M}$,
 and denote the induced linear  connection on the line bundle $\B$ by
the same symbol $\gammaa$.

\begin{definition}
\label{def:Ph}
Let $\Cc{M}$ be a graded manifold with a torsion-free 
 connection $\nabla$. 
Define the map of $\ci(\Cc{M})$-modules
    \begin{equation}\label{Section 3 Eqn The pbw map}
    \Cc{P}_\hbar:\Gamma(\hat{S}T_\Cc{M})\rightarrow \hbar\DO(\B)
    \end{equation}
by the inductive relations
    \begin{gather*}
    \Cc{P}_\hbar(f) = f,\quad f\in\ci(\Cc{M}), \\
    \Cc{P}_\hbar(X) = \hbar\gammaa_X,\qquad X\in\Gamma(T_\Cc{M}),
    \end{gather*}
and for homogeneous elements $X_1,\ldots,X_n\in\Gamma(T_\Cc{M})$,
    \begin{equation}
\label{eq:pbw}
    \Cc{P}_\hbar(X_1\odot\cdots\odot X_n) = \F{\hbar}{n}\sum^n_{k=1}\varepsilon_k\left(\gammaa_{X_k}\Cc{P}_\hbar(\bX^{\{k\}}) - \Cc{P}_\hbar(\nabla_{X_k}\bX^{\{k\}})\right),
    \end{equation}
where $\varepsilon_k = (-1)^{|X_k|(|X_1|+\cdots + |X_{k-1}|)}$ and $\bX^{\{k\}} = X_1\odot\cdots\odot X_{k-1}\odot X_{k+1}\odot\cdots \odot X_n$.
\end{definition}

\begin{proposition}
\begin{enumerate}
\item $\Cc{P}_\hbar$ is  a well defined $\ci(\Cc{M})$-module map;
\item For any   $\bX\in\Gamma(\hat{S}T_\Cc{M})$, we have
\begin{equation*}
    \sigma_\hbar(\Cc{P}_\hbar(\bX)) = \bX_\hbar. 
    \end{equation*}
\end{enumerate}
\end{proposition}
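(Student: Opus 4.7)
I would prove the two statements simultaneously by induction on the weight $n$, establishing at each stage that $\Cc{P}_\hbar$ descends to a well-defined $\ci(\Cc{M})$-linear map $\Gamma(S^nT_\Cc{M}) \to \hbar^n\DO^{\leq n}(\B)$ whose $n$-th principal symbol recovers $X_1 \odot \cdots \odot X_n$. The base cases $n=0,1$ are built into the definition: $\Cc{P}_\hbar(f)=f$ is an order-zero operator with $\sigma_0 = f$, and $\Cc{P}_\hbar(X)=\hbar\gammaa_X$ is a first-order operator with $\sigma_1(\gammaa_X)=X$, so that $\sigma_\hbar(\Cc{P}_\hbar(X))=\hbar X = X_\hbar$; both cases are manifestly $\ci(\Cc{M})$-linear.

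For the inductive step I would decompose the verification into three parts. First, membership in $\hbar^n\DO^{\leq n}(\B)$: by the inductive hypothesis each $\Cc{P}_\hbar(\bX^{\{k\}})$ lies in $\hbar^{n-1}\DO^{\leq n-1}(\B)$, so the composition $\hbar\gammaa_{X_k}\circ\Cc{P}_\hbar(\bX^{\{k\}})$ lies in $\hbar^n\DO^{\leq n}(\B)$, while the subtraction $\Cc{P}_\hbar(\nabla_{X_k}\bX^{\{k\}})$ lies in $\hbar^n\DO^{\leq n-1}(\B)$. Second, the symbol calculation: by Lemma~\ref{Section 2 Prop Principal symbol homomorphism} the top symbol of $\hbar\gammaa_{X_k}\circ\Cc{P}_\hbar(\bX^{\{k\}})$ is $\hbar^n\, X_k\odot(X_1\odot\cdots\odot\widehat{X_k}\odot\cdots\odot X_n)$, while the correction term is of strictly lower order and contributes nothing to $\sigma_n$. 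Summing over $k$ with the Koszul signs $\varepsilon_k$ and using the identity
\[
\sum_{k=1}^n \varepsilon_k\, X_k\odot\bX^{\{k\}} \;=\; n\, X_1\odot\cdots\odot X_n
\]
of the symmetric product, the prefactor $1/n$ cancels and gives $\sigma_n(\Cc{P}_\hbar(X_1\odot\cdots\odot X_n))=X_1\odot\cdots\odot X_n$, hence $\sigma_\hbar(\Cc{P}_\hbar(\bX))=\bX_\hbar$.

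The third and most delicate part is well-definedness on $\Gamma(S^n T_\Cc{M})$, namely that the right-hand side of \eqref{eq:pbw} is graded symmetric under transposing adjacent arguments $X_i \leftrightarrow X_{i+1}$ and $\ci(\Cc{M})$-linear in each slot. Here the torsion-free hypothesis enters crucially: when one swaps $X_k$ and $X_{k+1}$, the summands with $j\ne k,k+1$ are handled by the inductive symmetry of $\Cc{P}_\hbar$ on $\bX^{\{j\}}$, whereas the difference of the $j=k$ and $j=k+1$ contributions produces, after expanding via Leibniz for $\gammaa$, a term proportional to
\[
\gammaa_{X_k}X_{k+1} - (-1)^{|X_k||X_{k+1}|}\gammaa_{X_{k+1}}X_k - [X_k,X_{k+1}],
\]
which vanishes by torsion-freeness. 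For the $\ci(\Cc{M})$-linearity check one substitutes $X_k \rightsquigarrow fX_k$; the additional $X_j(f)$-contributions produced by $\gammaa_{X_j}(f\,\argument)$ cancel exactly against those produced by $\nabla_{X_j}(fX_k)=X_j(f)X_k+(-1)^{|X_j||f|}f\nabla_{X_j}X_k$, after using the $\ci(\Cc{M})$-linearity of $\Cc{P}_\hbar$ on weight $n-1$.

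The main obstacle is the graded-symmetry verification in step three, where Koszul signs, the Leibniz rule for $\gammaa$, and the torsion-free identity must interlock so that the difference between two orderings reduces to an expression that vanishes by the inductive hypothesis. Once symmetry and $\ci(\Cc{M})$-linearity are established, the map $\Cc{P}_\hbar$ descends canonically from $\Gamma(T_\Cc{M})^{\otimes n}$ to $\Gamma(S^n T_\Cc{M})$, and assembling the weights into the completed symmetric algebra $\Gamma(\hat{S}T_\Cc{M})$ yields the desired $\ci(\Cc{M})$-module map into $\hbar\DO(\B)$.
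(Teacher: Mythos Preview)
Your induction for part~(2) matches the paper's own proof. For part~(1) the paper takes a different route: rather than verifying symmetry and $\ci(\Cc{M})$-linearity directly, it works locally, choosing a nonvanishing section of $\B$ flat for the induced connection $\gammaa$, and uses it to identify $\DO(\B|_U)$ with differential operators on $\ci(U)$; under this identification $\Cc{P}_\hbar$ coincides (up to powers of $\hbar$) with the Poincar\'e--Birkhoff--Witt map of Liao--Sti\'enon, whose well-definedness is then cited from their paper. Your direct verification is more self-contained, while the paper's argument is shorter by delegating the combinatorics to the literature.

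One point in your plan is off, though not fatally. You assert that torsion-freeness enters crucially in the symmetry check, with the difference of the $j=k$ and $j=k+1$ summands producing a torsion term after a Leibniz expansion. In fact the symmetrized sum
\[
\tfrac{1}{n}\sum_{k}\varepsilon_k\big(\gammaa_{X_k}\Cc{P}_\hbar(\bX^{\{k\}})-\Cc{P}_\hbar(\nabla_{X_k}\bX^{\{k\}})\big)
\]
is invariant under graded transpositions for \emph{any} connection: when you swap $X_i\leftrightarrow X_{i+1}$, the $j=i$ and $j=i+1$ summands simply exchange roles with exactly matching Koszul signs, and no commutator $[X_i,X_{i+1}]$ ever appears. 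The substantive check is the $\ci(\Cc{M})$-multilinearity, which you outline correctly and which likewise does not require the torsion-free hypothesis. So while torsion-freeness is a standing assumption in the paper, neither route to this proposition actually uses it; if you carry out your computation you will find the symmetry step is easier than you anticipate.
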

\begin{proof}
Note that $\B$ is a real line bundle over $\cM$.
Hence local flat non-zero sections always exist.
Let $U\subset \cM$ be any contractible   open submanifold and $s\in \Gamma (\B|_U)$ 
a non-zero flat  section with respect to the connection $\gammaa$,
 i.e. $ \gammaa_X s=0$ for any $X\in \Gamma( T_\Cc{M}|_U)$.
Thus we have an isomorphism
\begin{eqnarray*}
 \Phi: \DO(\B|_U ) &\xto{\simeq} D(\cM|_U )\\
\Delta &\xto{} \frac{\Delta (fs)}{s}
\end{eqnarray*}
for any $f\in C^\infty (\cM|_U)$.
It is simple to check that under such an isomorphism,
  the map $\Cc{P}_\hbar$ can be
identified with the PBW map, i.e.
$$ (\Phi\circ \Cc{P}_\hbar) (\bX) =\hbar^n \pbw (\bX), \ \ \forall
\bX\in \Gamma({S}^n T_\Cc{M}|_U), $$
where $\pbw: \Gamma({S}T_\Cc{M})\to D(\cM )$ is the 
PBW map introduced by Liao-Stienon \cite[Definition 3.1]{MR3910470}.
According to \cite[Lemma  3.2]{MR3910470}, $\Phi\circ \Cc{P}_\hbar$ is a
well-defined $C^\infty (U)$-module map
from $\Gamma(\hat{S}T_\Cc{M}|_U)$ to
$ D(\cM|_U )$, which implies that
the inductive formula in Definition \ref{def:Ph}, when
being restricted to $U$, gives rise to a 
well-defined $C^\infty (U)$-module map
from $\Gamma(\hat{S}T_\Cc{M}|_U)$ 
to $\DO(\B|_U )$.
 Therefore it follows that $\Cc{P}_\hbar$ 
is indeed a well-defined $\ci(\Cc{M})$-module map.

To prove the second claim, we can write down
explicitly  the principal
symbol of $(\Phi\circ \Cc{P}_\hbar) (\bX)$ locally.
Below we give a direct proof using induction.
It is clear that the claim is true for all  $ \bX\in \Gamma({S}^{\leq 1} T_\Cc{M})$.
Assume that it is true for any $\bX\in \Gamma({S}^{n-1} T_\Cc{M})$, where
$n\geq 2$. According to Eq. \eqref{eq:pbw}, by induction assumption,
 we have
\begin{eqnarray*}
&& \sigma_\hbar \big(  \Cc{P}_\hbar(X_1\odot\cdots\odot X_n)\big) \\
&=& \F{\hbar}{n}\sum^n_{k=1}\varepsilon_k\left({X_k} \odot
( \sigma_\hbar\Cc{P}_\hbar)
(\bX^{\{k\}}) \right) \\
&=&\F{\hbar}{n}\sum^n_{k=1}\varepsilon_k{X_k} \odot (\hbar^{n-1}\bX^{\{k\}})\\
&=&\hbar^{n}X_1\odot\cdots\odot X_n .
\end{eqnarray*}
Hence, it follows that the claim holds for any $\bX\in \Gamma({S}^{n} T_\Cc{M})$.
This concludes the proof.
\end{proof}

Note that  $ \Cc{P}_\hbar (\bX)$ may not be self-adjoint. In
order to obtain  self-adjoint operators,
 we need to make  some modification.  Introduce a linear map

\begin{equation*}
\cqh:\Gamma(\hat{S}T_\Cc{M})\rightarrow \hbar\DO (\B )
\end{equation*}
as follows. For any $\bX\in \Gamma(\hat{S}T_\Cc{M})$, write
$$  \Cc{P}_\hbar (\bX)=\sum_{n\geq 0}\hbar^n\Delta_{n}\in \hbar\DO (\B). $$
Define
\begin{equation}
\label{eq:cqh}
\cqh  (\bX)=\sum_{n\geq 0}
 \frac{ \hbar^n }{2}\big(\Delta_{n}+(-1)^n\Delta_{n}^+\big).
\end{equation}

\begin{proposition}
\label{pro:chq}
Let $\Cc{M}$ be a $\ZZ$-graded manifold with a torsion-free
  connection $\nabla$. Then $\cqh$ defined  by \eqref{eq:cqh}
is  a degree $0$ map
\begin{equation}
\cqh:\Gamma(\hat{S}T_\Cc{M})\rightarrow \hbar\DO^+ (\B)
\end{equation} 
satisfying the condition that,
for any   $\bX\in\Gamma(\hat{S}T_\Cc{M})$,
\begin{equation*}
\sigma_\hbar ( \cqh (\bX)) = \bX_\hbar.
\end{equation*}
\end{proposition}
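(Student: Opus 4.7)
My plan is to verify the three assertions of the proposition in order: well-definedness of the coefficients, self-adjointness, and the principal symbol computation, all of which follow readily once one has Proposition-before-this-one on $\Cc{P}_\hbar$ together with Lemmas~\ref{lem:CDG} and \ref{lem:Phi}.

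First, for any $\bX\in\Gamma(\hat{S}T_\Cc{M})$, I would expand $\Cc{P}_\hbar(\bX)=\sum_{n\geq 0}\hbar^n\Delta_n$ with $\Delta_n\in\DO^{\leq n}(\B)$; since $\bX\mapsto \bX_\hbar$ has degree zero and $\Cc{P}_\hbar$ is $\ci(\Cc{M})$-linear of degree zero, each $\Delta_n$ has the same degree as the weight-$n$ component $\bX_n$. By Lemma~\ref{lem:CDG}(2), the adjoint $\Delta_n^+$ is again of order at most $n$ and of the same degree as $\Delta_n$, so
\[
\cqh(\bX)\;=\;\sum_{n\geq 0}\frac{\hbar^n}{2}\bigl(\Delta_n+(-1)^n\Delta_n^+\bigr)
\]
is a well-defined element of $\hbar\DO(\B)$ of degree zero, and $\bX\mapsto\cqh(\bX)$ is $\RR$-linear of degree zero.

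Next, for the self-adjointness: apply Lemma~\ref{lem:CDG}(3) to each coefficient. Then
\[
\Bigl(\tfrac12(\Delta_n+(-1)^n\Delta_n^+)\Bigr)^{+}\;=\;\tfrac12(\Delta_n^{+}+(-1)^n\Delta_n)\;=\;(-1)^n\cdot \tfrac12(\Delta_n+(-1)^n\Delta_n^+),
\]
so the coefficient of $\hbar^n$ in $\cqh(\bX)$ is self-adjoint for even $n$ and anti-self-adjoint for odd $n$; this is precisely the requirement in Definition~\ref{Section 2 Defn Adjoint operator} for $\cqh(\bX)$ to lie in $\hbar\DO^+(\B)$.

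Finally, for the principal-symbol identity, I would compute term by term. By Lemma~\ref{lem:Phi}, $\sigma_n(\Delta_n^+)=(-1)^n\sigma_n(\Delta_n)$, so
\[
\sigma_n\Bigl(\tfrac12(\Delta_n+(-1)^n\Delta_n^+)\Bigr)\;=\;\tfrac12\bigl(\sigma_n(\Delta_n)+(-1)^{2n}\sigma_n(\Delta_n)\bigr)\;=\;\sigma_n(\Delta_n).
\]
Summing over $n$ with the corresponding powers of $\hbar$ and invoking the preceding proposition $\sigma_\hbar(\Cc{P}_\hbar(\bX))=\bX_\hbar$ gives $\sigma_\hbar(\cqh(\bX))=\bX_\hbar$, as desired. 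No step here is genuinely difficult; the only thing to watch is that the sign $(-1)^n$ inserted in \eqref{eq:cqh} is chosen precisely so that the adjoint relation matches Definition~\ref{Section 2 Defn Adjoint operator} while the top symbol is preserved (rather than killed). Once that bookkeeping is made explicit, the proposition follows immediately.
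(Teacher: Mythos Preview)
Your proposal is correct and follows essentially the same route as the paper: the self-adjointness is checked coefficient-by-coefficient via the involution property of the adjoint (Lemma~\ref{lem:CDG}(3)), and the symbol identity is obtained from $\sigma_n(\Delta_n^+)=(-1)^n\sigma_n(\Delta_n)$ (Lemma~\ref{lem:Phi}) combined with $\sigma_\hbar(\Cc{P}_\hbar(\bX))=\bX_\hbar$. Your additional remarks on well-definedness and degree~$0$ are fine supplementary bookkeeping that the paper leaves implicit.
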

\begin{proof}
We have, for any $n\geq 0$, 
$$\big(\Delta_{n}+(-1)^n\Delta_{n}^+     \big)^+
=\Delta_{n}^+ +(-1)^n ({\Delta_{n}^+})^+
= \Delta_{n}^+ +(-1)^n \Delta_{n}=
(-1)^n (\Delta_{n}+(-1)^n\Delta_{n}^+     \big) . $$
Hence, it follows that  $\cqh  (\bX)\in  \hbar\DO^+ (\B)$.
   Moreover,
\begin{eqnarray*}
 \sigma_\hbar( \cqh (\bX))&=&  \sigma_\hbar  \left(
\sum_{n\geq 0} \frac{\hbar^n}{2}\big(\Delta_{n}+(-1)^n\Delta_{n}^+\big) \right)\\
&=& \sum_{n\geq 0} \frac{\hbar^n}{2}\big( \sigma_n (\Delta_{n})
+(-1)^n  \sigma_n (\Delta_{n}^+) \big) \quad (\text{by Eq. \eqref{eq:Phi}})\\
&=& \sum_{n\geq 0} \frac{ \hbar^n}{2}\big( \sigma_n (\Delta_{n})
+\sigma_n (\Delta_{n})\big)\\
&=& \sum_{n\geq 0}\hbar^n\sigma_n(\Delta_{n})\\
&=& \sigma_\hbar( \Cc{P}_\hbar (\bX))\\
&=& \bX_\hbar .
\end{eqnarray*}
This concludes the proof of the proposition. 
\end{proof}

\subsection{Quantization of   $(-1)$-shifted derived Poisson manifolds}

If $\Gamma(\B)$ carries a $\BV_\infty$-operator as defined in
Definition \ref{Section 2 Defn BV infinity operator},
Theorem \ref{Section 2 Theorem BV generates Poisson structure} states that
$\Cc{M}$ inherits a  $(-1)$-shifted derived Poisson structure generated
by the $\BV_\infty$-operator. But one can ask for the converse: when does a 
given  $(-1)$-shifted  derived Poisson manifold admit a $\BV_\infty$-operator which generates
 the derived Poisson structure? 
In this section, we will address this problem. First we will introduce
the following (see also \cite{MR4300181, MR4009180}):


\begin{definition}
\label{Section 4 Defn Quantisation of Poisson structure}
A quantization of a $(-1)$-shifted derived Poisson manifold $(\Cc{M},Q,\Pi)$
 is a $\BV_\infty$-operator $\Delta\in\hbar\DO^+ (\B) $ such that
    \begin{equation*}
    \sigma_\hbar(\Delta)\big|_{\hbar=1} = Q + \Pi,
    \end{equation*}
  where $\sigma_\hbar$ is the extended principal symbol
map as in \eqref{Section 2 Defn eqn Zero extended symbol map definition}.
\end{definition}

\begin{remark}

Classical $\BV$-operators  appeared in the pioneering work
of Batalin--Vilkovisky \cite{MR616572, MR726170, MR750272} in their study of
quantization of gauge theories. The geometry and
the quantization procedure of odd symplectic manifolds were studied
by many authors, in particular, by Schwarz \cite{MR1230027}.
In  a series of papers \cite{MR2063265, MR2412311, MR3685170, MR1952122,
MR2070054, MR3307151},  it was pointed out that  $BV$-operators
can be intrinsically
 realized  as odd Laplace operators acting on half densities,
which are self-adjoint. Our definition above was
motivated by the  consideration  to extend such
an approach to the general $\BV_\infty$-context.
It would also be an interesting question
 to investigate the obstruction
theory to quantization by relaxing the self-adjoint condition
in the definition.

\end{remark}
\color{black}

Note that, for a given dg manifold $(\Cc{M},Q)$, a derived Poisson
structure $\Pi$ is equivalent  to a Maurer-Cartan element
of the dgla 
\begin{equation}
\label{eq:dgla-classical}
\big(\Gamma(\hat{S}T_\Cc{M}), \quad \{\cdot, \cdot\},  \quad\{Q, \cdot\}\big).
\end{equation}
On the other hand, an operator $\Delta\in\hbar\DO^+ (\B) $ is
a  $\BV_\infty$-operator
if and only if $\Delta-\hbarQ$ is  a Maurer-Cartan element
of the  dgla
\begin{equation}
\label{eq:dgla-quantum}
\big( \hbar\DO^+ (\B) ,  \quad[\cdot, \cdot]_\hbar,  \quad[\hbarQ, \cdot]_\hbar\big).
\end{equation}

Summarizing the discussion earlier, we have the following

\begin{theorem}
\label{thm:dgla-exact}
We have the following short exact sequence of dglas
\begin{equation}
\label{eq:dgla-exact}
0\xto{}\hbar\DO^+_2 (\B)\xto{i}\hbar\DO^+ (\B)\xto{\phi}\Gamma(\hat{S}T_\Cc{M})\xto{}0,
\end{equation}
where $\Gamma(\hat{S}T_\Cc{M})$ and $\hbar\DO^+ (\B)$ stand for
the dglas  \eqref{eq:dgla-classical} and \eqref{eq:dgla-quantum}, respectively,
 $\hbar\DO^+_2 (\B)$ is considered as an ideal of  the dgla
 $\hbar\DO^+ (\B)$, the morphism  $i$ is the inclusion map and 
$\phi$ is the morphism defined by
\begin{equation}
 \label{eq:phi1}
\phi (\Delta) = \sigma_\hbar(\Delta)\big|_{\hbar=1}, \quad \quad
\forall \Delta\in \hbar\DO^+ (\B).
\end{equation}
\end{theorem}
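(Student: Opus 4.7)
The plan is to verify three things in order: that both sides of the sequence carry dgla structures compatible with the middle term, that both $i$ and $\phi$ are morphisms of dglas, and that the sequence is exact. The proof will essentially be a bookkeeping exercise that assembles ingredients already established in the preceding sections, so I do not expect significant obstacles, only some care with the $\hbar$-grading conventions.

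First, for the dgla structures I would check that $\hbar\DO^+(\B)$ with $[\cdot,\cdot]_\hbar$ forms a Lie algebra (this is Lemma~\ref{lem:NorthWales}), that the operator $\hbar Q$ lies in $\hbar\DO^+(\B)$ (since $Q$ is a first-order anti-self-adjoint operator by Proposition~\ref{Section 2 Prop Lie derivative are first order asa}, hence $\hbar Q$ is self-adjoint in the sense of Definition~\ref{Section 2 Defn Adjoint operator}), and that $[\hbar Q, \hbar Q]_\hbar = 2\hbar Q^2 = 0$, so $[\hbar Q,\argument]_\hbar$ is a square-zero degree $(+1)$ derivation. That $\Gamma(\hat{S}T_\Cc{M})$ with $\{Q,\argument\}$ is a dgla is classical. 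For $\hbar\DO^+_2(\B)$, I would use Lemma~\ref{Section 2 Lemma Regular ops as degree 0 reg ops}(2) to conclude $[\hbar\DO_0^+, \hbar\DO_2^+]_\hbar \subseteq \hbar\DO_2^+$, which shows it is a Lie ideal stable under the differential (since $\hbar Q \in \hbar\DO_0^+$), whence $i$ is a dgla morphism.

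Next, to analyze $\phi$, I would observe that for $\Delta = \sum_{n\geq 0}\hbar^n\Delta_n \in \hbar\DO^+(\B)$ the extended symbol $\sigma_\hbar(\Delta) = \sum \hbar^n \Pi_n$ has $\Pi_n \in \Gamma(S^n T_\Cc{M})$, so evaluating at $\hbar=1$ lands in $\Gamma(\hat{S}T_\Cc{M})$ (the sum is well-defined as a formal series in the weight grading). For the bracket, combining Lemma~\ref{lem:NAP} with Lemma~\ref{Section 2 Lemma Bracket of hbar elements} gives
\begin{equation*}
\sigma_\hbar([\Delta,\Delta']_\hbar) = \pbracketh{\sigma_\hbar(\Delta)}{\sigma_\hbar(\Delta')} = \pbracket{\phi(\Delta)}{\phi(\Delta')}_\hbar,
\end{equation*}
and setting $\hbar=1$ yields $\phi([\Delta,\Delta']_\hbar) = \{\phi(\Delta),\phi(\Delta')\}$. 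Since $\phi(\hbar Q) = Q$, the same identity specialized to the differential shows $\phi\circ[\hbar Q,\argument]_\hbar = \{Q,\argument\}\circ \phi$.

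Finally, for exactness, injectivity of $i$ is tautological, and surjectivity of $\phi$ follows directly from Proposition~\ref{pro:chq}, which provides the section $\cqh$ satisfying $\sigma_\hbar(\cqh(\bX)) = \bX_\hbar$, hence $\phi(\cqh(\bX)) = \bX$. The only non-trivial point is the identification $\ker\phi = \hbar\DO_2^+(\B)$: if $\phi(\Delta) = 0$ then $\sum_{n\geq 0}\Pi_n = 0$ in $\Gamma(\hat{S}T_\Cc{M})$, and since the summands lie in distinct weight components $\Gamma(S^n T_\Cc{M})$ we conclude each $\Pi_n = 0$, i.e.\ $\sigma_\hbar(\Delta) = 0$; Lemma~\ref{lem:Bari} then gives $\Delta \in \hbar\DO_2^+(\B)$. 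The reverse inclusion $\hbar\DO_2^+(\B) \subseteq \ker\phi$ is immediate from the definitions of the symbol maps. The main obstacle, if any, is keeping track of the correspondence between the filtration by $t(\Delta)$, the weight decomposition on the symbol side, and the self-adjointness parity convention, all of which are handled cleanly by the lemmas in Section~2.
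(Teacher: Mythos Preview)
Your proposal is correct and follows essentially the same approach as the paper: the paper's proof likewise invokes Lemma~\ref{lem:NAP} together with Lemma~\ref{Section 2 Lemma Bracket of hbar elements} to check that $\phi$ is a dgla morphism, Proposition~\ref{pro:chq} for surjectivity, and Lemma~\ref{lem:Bari} for the identification of $\ker\phi$. You simply spell out more of the background checks (the dgla structures on the three terms, the ideal property of $\hbar\DO^+_2$) that the paper leaves implicit.
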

\begin{proof}
From Lemma \ref{lem:NAP} and Lemma 
\ref{Section 2 Lemma Bracket of hbar elements}, it follows that
$\phi$ is indeed  a Lie algebra morphism and preserves the differentials,
i.e. a morphism of dglas.
Proposition \ref{pro:chq} implies that $\phi$ is indeed surjective.
Finally, to identify  the kernel of $\phi$, note that,
 for any $\Delta \in \hbar\DO^+ (\B)$, 
the condition that $\phi (\Delta)  =0$ is equivalent to that $\sigma_\hbar(\Delta)=0$.
The latter is equivalent to that $\Delta\in \hbar\DO^+_2 (\B)$
according to Lemma \ref{lem:Bari}. Thus it follows that
$\ker \phi \cong \hbar\DO^+_2 (\B)$. The proof is completed.
\end{proof}

As a consequence, the  quantization problem in Definition
 \ref{Section 4 Defn Quantisation of Poisson structure}
 can be reinterpreted as the lifting problem of
 a    Maurer-Cartan element of the dgla \eqref{eq:dgla-classical}
 to a Maurer-Cartan  element of the dgla \eqref{eq:dgla-quantum},
where both dglas are related by
the short exact sequence  \eqref{eq:dgla-exact}.
As  is standard, we can  find the solution to the latter
by lifting   a sequence of Maurer-Cartan elements as follows.

Note that  we have the following sequence of short exact sequences
of dglas, for all $k=1, 2, \cdots$

\begin{equation}
\label{eq:dgla-exact-k}
0\xto{}\hdo{2k}{2k+2} \xto{i_k} \hdo{}{2k+2} \xto{\prr_k} \hdo{}{2k}\xto{}0,
\end{equation}
where $i_k$ is the natural inclusion and  $\prr_k$ is the natural projection.

As an immediate consequence of Theorem \ref{thm:dgla-exact},
  the morphism $\phi$ in \eqref{eq:phi1} induces an isomorphism of dglas,
 denoted by the same symbol
\begin{equation}
\label{eq:phim}
 \phi: \hdo{}{2}\xto{\simeq} \Gamma(\hat{S}T_\Cc{M})
\end{equation}

Note that the extended principal symbol
 map  \eqref{Section 2 Defn eqn Zero extended symbol map definition}
 descends to a well defined map, for any $k=1, 2, \cdots$,
denoted by the same symbol, by abuse of notation
$$\sigma_\hbar : \hdo{}{2k} \rightarrow \Gamma(\hat{S}T_{\Cc{M}})\hbarr. $$

The following lemma is thus obvious.

\begin{lemma}
\label{lem:Munich}
A    $(-1)$-shifted derived Poisson manifold $(\Cc{M},Q,\Pi)$ is
quantizable if and only if there exists a sequence of
operators $\Deltaa^{(k)}\in  \hbar\DO^+ (\B)$, $k=1, 2, \cdots$,
such that $[\Deltaa^{(k)}]\in \hdo{}{2k}$
is a  Maurer-Cartan element 
and
\begin{equation}
\label{eq:Munich}
\prr_k [\Deltaa^{(k+1)}]=[\Deltaa^{(k)}], 
\end{equation}
 where
\begin{equation}
\label{eq:Munich1}
[\Deltaa^{(1)}]=  \phi^{-1} (\Pi).
\end{equation}
Here 
$[\Deltaa^{(k)}]\in \hdo{}{2k}$ denotes the equivalent class
of  $\Deltaa^{(k)}\in  \hbar\DO^+ (\B)$, 
and $\phi$ is the isomorphism \eqref{eq:phim}. 
\end{lemma}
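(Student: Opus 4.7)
The plan is to prove both directions of the equivalence by unravelling the definitions and exploiting the $\hbar$-adic completeness of $\hbar\DO^+(\B)$ together with the short exact sequence \eqref{eq:dgla-exact}.

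For the \emph{only if} direction, given a quantization $\Delta\in\hbar\DO^+(\B)$, I set $\Deltaa:=\Delta-\hbar Q$ and recall from the discussion following Definition \ref{Section 4 Defn Quantisation of Poisson structure} that $\Deltaa$ is a Maurer--Cartan element of the dgla \eqref{eq:dgla-quantum}. Choosing $\Deltaa^{(k)}:=\Deltaa$ for every $k\geq1$, the class $[\Deltaa^{(k)}]\in\hdo{}{2k}$ is a Maurer--Cartan element because the projection $\hbar\DO^+(\B)\twoheadrightarrow\hdo{}{2k}$ is a morphism of dglas, and \eqref{eq:Munich} is automatic. Finally, since $\phi(\hbar Q)=Q$ (as $\sigma_\hbar(\hbar Q)=\hbar Q$), we have $\phi(\Deltaa)=(Q+\Pi)-Q=\Pi$, giving $[\Deltaa^{(1)}]=\phi^{-1}(\Pi)$.

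For the \emph{if} direction, given a compatible tower $\{[\Deltaa^{(k)}]\}_{k\geq1}$ as in the statement, the compatibility $\prr_k[\Deltaa^{(k+1)}]=[\Deltaa^{(k)}]$ means, for any chosen representatives, that
\[
\Deltaa^{(k+1)}-\Deltaa^{(k)}\in \hbar\DO^+_{2k}(\B)=\hbar^{2k}\cdot\hbar\DO^+_0(\B),
\]
where the second equality is Lemma \ref{Section 2 Lemma Regular ops as degree 0 reg ops}. Hence the sequence $(\Deltaa^{(k)})$ is Cauchy in the $\hbar$-adic topology on $\hbar\DO^+(\B)$, and by completeness it converges to some $\tilde\Deltaa\in\hbar\DO^+(\B)$. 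Because $\tilde\Deltaa-\Deltaa^{(1)}=\sum_{k\geq1}(\Deltaa^{(k+1)}-\Deltaa^{(k)})\in\hbar\DO^+_2(\B)$, we still have $[\tilde\Deltaa]=\phi^{-1}(\Pi)$ in $\hdo{}{2}$; equivalently $\sigma_\hbar(\tilde\Deltaa+\hbar Q)|_{\hbar=1}=Q+\Pi$. Setting $\Delta:=\tilde\Deltaa+\hbar Q$ will then produce the desired quantization, once we verify the Maurer--Cartan equation $[\hbar Q,\tilde\Deltaa]_\hbar+\tfrac12[\tilde\Deltaa,\tilde\Deltaa]_\hbar=0$.

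The main obstacle is this last verification: passing the Maurer--Cartan equation to the limit. The point is that, for each $k$, the expression
\[
E_k:=[\hbar Q,\Deltaa^{(k)}]_\hbar+\tfrac12[\Deltaa^{(k)},\Deltaa^{(k)}]_\hbar
\]
lies in $\hbar\DO^+_{2k}(\B)$, because by hypothesis $[\Deltaa^{(k)}]$ is Maurer--Cartan in $\hdo{}{2k}$. Continuity of $[-,-]_\hbar$ and of $[\hbar Q,-]_\hbar$ in the $\hbar$-adic topology, together with the fact that $\hbar\DO^+_{2k}(\B)$ is an ideal of $\hbar\DO^+(\B)$ (Lemma \ref{Section 2 Lemma Regular ops as degree 0 reg ops}(2)) and that $\bigcap_k\hbar\DO^+_{2k}(\B)=0$, forces the limit of $E_k$ to vanish, giving the MC equation for $\tilde\Deltaa$. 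One also needs to observe that $\hbar\DO^+(\B)$ is $\hbar$-adically closed inside $\hbar\DO(\B)$, which follows from the fact that self-adjointness is preserved under $\hbar$-adic limits (Lemma \ref{lem:NorthWales}). With these technical points addressed, the construction produces the required quantization.
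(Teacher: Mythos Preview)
Your proposal is correct and is exactly the standard inverse-limit argument the paper has in mind; the paper itself declares the lemma ``thus obvious'' and gives no proof, so there is nothing further to compare. One small quibble: the closedness of $\hbar\DO^+(\B)$ under $\hbar$-adic limits is not really the content of Lemma~\ref{lem:NorthWales} (which concerns closure under the bracket), but it is immediate from the componentwise description of self-adjointness in Definition~\ref{Section 2 Defn Adjoint operator}.
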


\begin{remark}
As  is standard, the existence of   the 
sequence of Maurer-Cartan elements
$[\Deltaa^{(k)}]\in \hdo{}{2k}$, where
 $\Deltaa^{(k)}\in  \hbar\DO^+ (\B)$, $k=1, 2, \cdots$,
is not canonical. It depends on how one  chooses the sequence.
 In other words,  for any $n$, the existence of a
Maurer-Cartan element   $[\Deltaa^{(n+1)}]\in \hdo{}{2n+2}$
depends on how $[\Deltaa^{(n)}]\in \hdo{}{2n}$ is picked.
\end{remark}

Note that \eqref{eq:Munich} implies that
$ \sigma_\hbar \big( [\Deltaa^{(k+1)}] \big)= \sigma_\hbar \big( [\Deltaa^{(k)}]
 \big)$,
 and therefore,  by \eqref{eq:Munich1},
we have
$$ \sigma_\hbar \big( [\Deltaa^{(k+1)} ] \big)=\Pi_\hbar, \quad \quad 
k=1, 2, \cdots  .$$
 Thus we have
the following

\begin{lemma}
\label{lem:Kyiv}
Let $\Deltaa^{(k)}\in  \hbar\DO^+ (\B)$, $k=1, 2, \cdots$
be operators as in Lemma \ref{lem:Munich}.
Then, for any $k=1, 2, \cdots$,  we have
\begin{equation}
\label{eq:sigmah}
 \sigma_\hbar \big( [\Deltaa^{(k)} ]\big)=\Pi_\hbar.
\end{equation}
\end{lemma}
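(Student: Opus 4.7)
The plan is to prove the lemma by a short induction on $k$, building on the recursive structure provided by Lemma \ref{lem:Munich}. For the base case $k=1$, equation \eqref{eq:Munich1} gives $[\Deltaa^{(1)}] = \phi^{-1}(\Pi)$, so $\phi(\Deltaa^{(1)}) = \Pi$, which by the definition \eqref{eq:phi1} reads $\sigma_\hbar(\Deltaa^{(1)})\big|_{\hbar=1} = \Pi$. To upgrade this evaluation identity to the full power-series identity $\sigma_\hbar(\Deltaa^{(1)}) = \Pi_\hbar$, I would observe that by \eqref{Section 2 Defn eqn Zero extended symbol map definition} the image of $\sigma_\hbar$ always lies in the weight-graded subspace $\Gamma(\hat{S}T_\Cc{M})_\hbar \subset \Gamma(\hat{S}T_\Cc{M})\hbarr$, since each $\hbar^n$-coefficient $\sigma_n(\Delta_n)$ lies in $\Gamma(S^n T_\Cc{M})$. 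On this subspace, evaluation at $\hbar=1$ is an isomorphism with inverse $\cdot_\hbar$ of \eqref{Section 2 Eqn hbar map on tensor fields}, so the base case follows.

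For the inductive step, assume $\sigma_\hbar([\Deltaa^{(k)}]) = \Pi_\hbar$. Equation \eqref{eq:Munich} gives $\prr_k[\Deltaa^{(k+1)}] = [\Deltaa^{(k)}]$, so it suffices to check that $\sigma_\hbar$ is compatible with $\prr_k$, i.e. that $\sigma_\hbar$ descended to $\hdo{}{2k+2}$ coincides with its descent to $\hdo{}{2k}$ after precomposition with $\prr_k$. This reduces to verifying that $\sigma_\hbar$ kills $\hbar\DO^+_{2k}$ for $k\geq 1$: any $\Delta = \sum_{n\geq 0} \hbar^n \Delta_n \in \hbar\DO^+_{2k}$ satisfies $\order(\Delta_n) \leq n - 2k < n$ by Eq. \eqref{Section 2 Equ Integer t}, whence $\sigma_n(\Delta_n) = 0$ for every $n$ by the defining property of the $n$th principal symbol. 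Consequently $\sigma_\hbar([\Deltaa^{(k+1)}]) = \sigma_\hbar(\prr_k[\Deltaa^{(k+1)}]) = \sigma_\hbar([\Deltaa^{(k)}]) = \Pi_\hbar$, completing the induction.

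No essential obstacle appears: the statement is ultimately a bookkeeping consequence of the data already assembled in Lemma \ref{lem:Munich} together with the defining properties of $\sigma_\hbar$. The only care required is to make the well-definedness of $\sigma_\hbar$ on the successive quotients $\hdo{}{2k}$ and its compatibility with the projections $\prr_k$ explicit, which amounts to the elementary vanishing observation above.
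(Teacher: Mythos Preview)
Your proof is correct and follows essentially the same approach as the paper: the paper's argument (the brief paragraph preceding the lemma) likewise observes that \eqref{eq:Munich} forces $\sigma_\hbar\big([\Deltaa^{(k+1)}]\big)=\sigma_\hbar\big([\Deltaa^{(k)}]\big)$ and then invokes \eqref{eq:Munich1} for the base case. You have simply made explicit the two points the paper leaves implicit, namely that the image of $\sigma_\hbar$ lies in $\Gamma(\hat{S}T_\Cc{M})_\hbar$ (so evaluation at $\hbar=1$ can be inverted by $\cdot_\hbar$) and that $\sigma_\hbar$ vanishes on $\hbar\DO^+_{2k}$ for $k\geq 1$.
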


Finally, note that for  any  operator $\Deltaa \in  \hbar\DO^+ (\B)$,
we have the following equivalent relation:
\begin{equation}
\label{eq:MCk}
[\Deltaa]\in \MCc{\hdo{}{2k}}\longleftrightarrow \Delta^2 \in \hbar\DO^+_{2k+1} (\B),
\end{equation}
where 
$$\Delta=\hbarQ+\Deltaa .$$

In the sequel, we will use the interpretation of either side
of \eqref{eq:MCk}.

\subsection{Obstruction classes}

In what follows, we will investigate the
obstruction class for quantizing
a   $(-1)$-shifted derived Poisson manifold.
We will prove the following main theorem of the paper.

\begin{theorem}
\label{thm:main}
Let  $(\Cc{M},Q,\Pi)$ be a $(-1)$-shifted derived Poisson manifold.
Assume that the second Poisson cohomology group
 $\Cc{H}^2 (\Cc{M}, Q+ \Pi)$ vanishes, then $(\Cc{M},Q,\Pi)$
is quantizable.
\end{theorem}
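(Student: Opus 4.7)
The plan is to follow the roadmap sketched after Theorem \ref{thm:dgla-exact}: the quantization problem reduces, via Lemma \ref{lem:Munich}, to producing a compatible sequence of Maurer--Cartan lifts through the tower of square-zero dgla extensions \eqref{eq:dgla-exact-k}. For the base step I take $\hat{\Delta}^{(1)} := \cqh(\Pi)$, whose image under $\phi$ equals $\Pi$ by Proposition \ref{pro:chq}; the Maurer--Cartan condition in $\hdo{}{2}$ is then the classical condition $\{Q+\Pi, Q+\Pi\} = 0$, which holds by assumption on $(\Cc{M}, Q, \Pi)$.

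The heart of the argument is to identify each ideal $\hdo{2k}{2k+2}$ appearing in \eqref{eq:dgla-exact-k}, together with its twisted dg module structure over $\hdo{}{2k}$, with the Poisson complex $(\Gamma(\hat{S}T_\Cc{M}), d_\Pi)$. On the level of modules, the identification is provided by $\hbar^{-2k}\sigma_\hbar$ evaluated at $\hbar=1$: Lemma \ref{lem:Bari} identifies the kernel of $\sigma_\hbar$ on $\hbar^{-2k}\hbar\DO^+_{2k}(\B)$ as exactly $\hbar^{-2k}\hbar\DO^+_{2k+2}(\B)$, while Proposition \ref{pro:chq} supplies surjectivity via the section $\hbar^{2k}\cqh$. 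To recognize the differential I apply \eqref{Section 2 Eqn Extended symbol Lie algebra property} to the bracket with $\hbar Q + \hat{\Delta}^{(k)}$, invoke Lemma \ref{lem:Kyiv} to conclude that this operator has extended symbol $(Q+\Pi)_\hbar$, and then deduce that $[\hbar Q + \hat{\Delta}^{(k)}, -]_\hbar$ descends to $\{Q+\Pi, -\} = d_\Pi$. Independence from the choice of lift of $\hat{\Delta}^{(k)}$ follows from $[\hbar\DO^+_{2k}(\B), \hbar\DO^+_{2k}(\B)]_\hbar \subset \hbar\DO^+_{4k}(\B) \subset \hbar\DO^+_{2k+2}(\B)$, valid for $k \geq 1$, which simultaneously exhibits $\hdo{2k}{2k+2}$ as a square-zero ideal.

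Once this identification is in place the obstruction theory is standard. Let $F_k := [\hbar Q, \hat{\Delta}^{(k)}]_\hbar + \tfrac{1}{2}[\hat{\Delta}^{(k)}, \hat{\Delta}^{(k)}]_\hbar$ denote the Maurer--Cartan curvature; the hypothesis on $\hat{\Delta}^{(k)}$ places $F_k$ in $\hbar\DO^+_{2k}(\B)$, so it defines a degree $2$ class $[F_k]$ in the identified complex, and the Bianchi identity $[\hbar Q + \hat{\Delta}^{(k)}, F_k]_\hbar = 0$ in the ambient dgla projects to $d_\Pi [F_k] = 0$. Since $\Cc{H}^2(\Cc{M}, Q+\Pi)$ vanishes by assumption, there exists $\tilde{\eta} \in \Gamma(\hat{S}T_\Cc{M})$ of degree $1$ with $d_\Pi \tilde{\eta} = -[F_k]$; setting $\eta := \hbar^{2k}\cqh(\tilde{\eta}) \in \hbar\DO^+_{2k}(\B)$ and $\hat{\Delta}^{(k+1)} := \hat{\Delta}^{(k)} + \eta$ yields the desired lift, since modulo $\hbar\DO^+_{2k+2}(\B)$ the new curvature reduces to $F_k + [\hbar Q + \hat{\Delta}^{(k)}, \eta]_\hbar$ (the quadratic term $[\eta,\eta]_\hbar \in \hbar\DO^+_{4k}(\B)$ being absorbed for $k\geq 1$).

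Finally, $\hat{\Delta}^{(k+1)} - \hat{\Delta}^{(k)} \in \hbar\DO^+_{2k}(\B)$, and the filtration $\{\hbar\DO^+_{2k}(\B)\}$ is Hausdorff (since a component of $\hbar$-weight $n$ would need order bounded by $n-2k$, forcing vanishing for $k > n/2$) and complete with respect to the product description of $\hbar\DO^+(\B)$; hence the sequence converges to a well-defined $\hat{\Delta} \in \hbar\DO^+(\B)$, and $\Delta := \hbar Q + \hat{\Delta}$ is the desired quantization, satisfying $\sigma_\hbar(\Delta)|_{\hbar=1} = Q + \Pi$ and $\Delta^2 = 0$. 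I anticipate the main technical hurdle to be the clean identification of $(\hdo{2k}{2k+2}, [\hbar Q + \hat{\Delta}^{(k)}, -]_\hbar)$ with $(\Gamma(\hat{S}T_\Cc{M}), d_\Pi)$: this requires tracking the $\hbar$-shift built into \eqref{Section 2 Defn nth extended symbol maps}, the parity constraints coming from Corollary \ref{Section 2 Eqn Adjoint operator is sign to original}, and the compatibility of the $\cqh$-section with self-adjointness afforded by Proposition \ref{pro:chq}.
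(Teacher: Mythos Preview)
Your proposal is correct and follows essentially the same route as the paper: it is the content of Proposition~\ref{pro:Manchester} (together with the well-definedness discussion preceding Definition~\ref{def:modular}) unpacked explicitly, and your curvature $F_k$ is precisely the paper's $\Omega^{(k)}/\hbar$, so that your obstruction class $[F_k]$ coincides with the paper's modular class $[\mathbf{S}^{(k)}]$. The only cosmetic point is that when you cite Lemma~\ref{lem:Kyiv} for $\sigma_\hbar(\hat{\Delta}^{(k)}) = \Pi_\hbar$ you are really using it inductively (since you have not yet produced the full compatible sequence), but this is immediate from $\hat{\Delta}^{(k+1)}-\hat{\Delta}^{(k)}\in\hbar\DO^+_{2k}$ and Lemma~\ref{lem:Bari}.
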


According to Lemma \ref{lem:Munich}, the quantization problem
of a $(-1)$-shifted derived Poisson manifold $(\Cc{M},Q,\Pi)$
is equivalent to that of  lifting
a sequence  of Maurer-Cartan elements in the short exact sequences of
dglas \eqref{eq:dgla-exact-k}. Our  strategy is to find the
obstruction class for the lifting in each such a
short exact sequence, which should be of independent
interest.


Assume that  $\Deltaa^{(k)}  \in \hbar\DO^+ (\B)$ such that
 $$  [\Deltaa^{(k)}]\in \MCc{ \hdo{}{2k} } $$
and satisfies  the condition  \eqref{eq:sigmah}.

Let  
$$\Delta^{(k)}=\hbarQ+\Deltaa^{(k)}$$
and 
$$\Omega^{(k)}:= \big(\Delta^{(k)}\big)^2 =\F{\hbar}{2}[\Delta^{(k)},\Delta^{(k)}]_\hbar .$$
Then
\begin{equation}
\label{eq:sigmah1}
 \sigma_\hbar \big( \Delta^{(k)}\big)=\hbar Q+\Pi_\hbar.
\end{equation}

According to \eqref{eq:MCk}, $\Omega^{(k)}\in   \hbar\DO^+_{2k+1} (\B)$
and therefore we have $\frac{\Omega^{(k)}}{\hbar^{2k+1}}\in\hbar\DO^+ (\B)$.
Denote by
$$\bS^{(k)}_{\hbar}=
\sigma_\hbar \big ( \frac{ \Omega^{(k)} }{\hbar^{2k+1}}\big), \quad \quad \text{and }
\quad \bS^{(k)}=\bS^{(k)}_{\hbar}|_{\hbar=1}. $$
By the graded Jacobi identity, we have
$$\lieh{\Delta^{(k)}}{\Omega^{(k)}}=
\lieh{\Delta^{(k)}}{\F{\hbar}{2}[\Delta^{(k)},\Delta^{(k)}]_\hbar}=0. $$ 
Therefore,  by taking the extended principal symbol, we
have
$$ \pbracketh{\hbar Q+\Pi_{\hbar} }{\bS^{(k)}_{\hbar}}=
\pbracketh{\hbar Q+\Pi_\hbar}{\sigma_\hbar (\frac{ \Omega^{(k)}}{\hbar^{2k+1}}
)}
=\sigma_\hbar \lieh{\Delta^{(k)}}{\frac{ \Omega^{(k)}}{\hbar^{2k+1}}}=0 .$$ 
Thus it follows that 
$\pbracket{Q+\Pi}{\bS^{(k)}}=0$, i.e. $\bS^{(k)}$ 
is a $d_\Pi$-cocycle of degree $2$.

Assume that  ${\Deltaaa}^{(k)}  \in \hbar\DO^+ (\B)$ is another
operator representative of $[\Deltaa^{(k)}]$ in $\hdo{}{2k}$.
 Then there exists an operator
$\Psia^{(k)}\in  \hbar\DO^+ (\B)$ such that
$${\Deltaaa}^{(k)}-{\Deltaa}^{(k)}=\hbar^{2k} \Psia^{(k)}. $$
Let
${\Delta'}^{(k)}=\hbarQ+\Deltaaa^{(k)}$.  Therefore
$${\Delta'}^{(k)}={\Delta}^{(k)}+\hbar^{2k} \Psia^{(k)} .$$
Hence we have
$${\Omega'}^{(k)}:= \big({\Delta'}^{(k)}\big)^2 =
\F{\hbar}{2}[{\Delta'}^{(k)},{\Delta'}^{(k)}]_\hbar
={\Omega}^{(k)}+ {\hbar}^{2k+1} \lieh{{\Delta}^{(k)}}{ \Psia^{(k)}}
+\F{\hbar^{4k+1}}{2} \lieh{\Psia^{(k)}}{\Psia^{(k)}}  .$$
Then
$$\F{{\Omega'}^{(k)}}{\hbar^{2k+1}}=\F{{\Omega}^{(k)}}{\hbar^{2k+1}}
+\lieh{\Delta^{(k)}}{\Psia^{(k)}}+ \F{\hbar^{2k}}{2} \lieh{\Psia^{(k)}}{\Psia^{(k)}} . $$
By applying the extended principal  map  $\sigma_\hbar$, we have
$${\bS'}^{(k)}_{\hbar}=\bS^{(k)}_{\hbar}+ \pbracketh{\hbar Q+\Pi_{\hbar} }{
\sigma_\hbar(\Psia^{(k)}} . $$
Hence
$$ {\bS'}^{(k)}=\bS^{(k)}+ \pbracket{Q+\Pi}{Z^{(k)}},$$
where
$ {\bS'}^{(k)}_{\hbar}=\sigma_\hbar \big ( \frac{ {\Omega'}^{(k)} }{\hbar^{2k+1}}\big)$, 
${\bS'}^{(k)}={\bS'}^{(k)}_{\hbar}|_{\hbar=1}$, and
 $Z^{(k)}=\sigma_\hbar(\Psia^{(k)} )|_{h=1}$.
As a result ${\bS'}^{(k)}$ and $\bS^{(k)}$ define
the same class in $\Cc{H}^2 (\Cc{M}, Q+ \Pi)$, independent of the choice of
 the operator representative  in the operator class of $ \hdo{}{2k}$.

\begin{definition}
\label{def:modular}
Let $[\Deltaa^{(k)}]\in \MCc{ \hdo{}{2k} }$ be a Maurer-Cartan element
satisfying  the condition  \eqref{eq:sigmah}.
The class $[\bS^{(k)}]\in \Cc{H}^2 (\Cc{M}, Q+ \Pi)$ 
is called the modular class of $[\Deltaa^{(k)}]$.
\end{definition}

In particular, for $k=1$, we have a canonical element
$[\Deltaa^{(1)}]=  \phi^{-1} (\Pi)\in  \MCc{ \hdo{}{2} }$. Thus
we are led to the following

\begin{definition}
\label{def:modular-Poisson}
For any $(-1)$-shifted derived Poisson manifold  $(\Cc{M}, Q,  \Pi)$,
the modular class of $[\Deltaa^{(1)}]=  \phi^{-1} (\Pi)\in  \MCc{ \hdo{}{2} }$
is called the modular class of the  derived Poisson manifold  $(\Cc{M}, Q,  \Pi)$.
\end{definition}

The modular class in Definition \ref{def:modular-Poisson}
 is an intrinsic invariant of a
  $(-1)$-shifted  derived Poisson manifold.
However, for $k\geq 2$, the  modular class
 $[\bS^{(k)}]\in \Cc{H}^2 (\Cc{M}, Q+ \Pi)$
is not an intrinsic invariant, it depends
on the choice of the Maurer-Cartan element
$[\Deltaa^{(k)}]\in \MCc{ \hdo{}{2k} }$.

\begin{remark}
In the context of supergeometry, the modular class of
 an odd Poisson supermanifold  was originally
 introduced by  Peddie-Khudaverdian in \cite{MR3685170}
 considering odd Laplace-type operators acting on functions via a choice of volume form. The class was further studied in \cite{MR3685170},
 where the class was identified when considering nilpotency conditions of arbitrary second order degree $(+1)$ operators on odd Poisson supermanifolds. 
\end{remark}

\begin{proposition}
\label{pro:Manchester}
Consider the short exact sequence of  dglas \eqref{eq:dgla-exact-k},
$\forall k=1, 2, \cdots$.
Given any  Maurer-Cartan element $[\Deltaa^{(k)}]\in \MCc{ \hdo{}{2k} }$
 satisfying  the condition  \eqref{eq:sigmah},
the modular class
$[\bS^{(k)}]\in \Cc{H}^2 (\Cc{M}, Q+ \Pi)$
 is the obstruction class   of
lifting  $[\Deltaa^{(k)}]$ to a Maurer-Cartan element  in ${ \hdo{}{2k+2}}$.
\end{proposition}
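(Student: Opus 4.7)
The plan is to reduce the lifting problem at the operator level (via the equivalence \eqref{eq:MCk}) to a cohomological condition at the symbol level, using the computation that already appeared in the derivation of the modular class. Concretely, any lift of $[\Deltaa^{(k)}]$ in $\MCc{\hdo{}{2k+2}}$ is represented by an operator of the form $\Deltaa^{(k+1)}=\Deltaa^{(k)}+\hbar^{2k}\Psia^{(k)}$ for some degree $(+1)$ element $\Psia^{(k)}\in\hbar\DO^+(\B)$, and by \eqref{eq:MCk} the Maurer-Cartan condition is
$(\Delta^{(k+1)})^2\in\hbar\DO^+_{2k+3}(\B)$, where $\Delta^{(k+1)}=\hbar Q+\Deltaa^{(k+1)}$. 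So my first step is to expand this square using $[A,B]=\hbar[A,B]_\hbar$ and $(\Psia^{(k)})^2=\tfrac{\hbar}{2}[\Psia^{(k)},\Psia^{(k)}]_\hbar$, yielding
\[
(\Delta^{(k+1)})^2=\Omega^{(k)}+\hbar^{2k+1}[\Delta^{(k)},\Psia^{(k)}]_\hbar+\tfrac{\hbar^{4k+1}}{2}[\Psia^{(k)},\Psia^{(k)}]_\hbar.
\]
Since $4k+1\geq 2k+3$ for $k\geq 1$, the last term already lies in $\hbar\DO^+_{2k+3}(\B)$, so the lifting condition collapses to $\Omega^{(k)}+\hbar^{2k+1}[\Delta^{(k)},\Psia^{(k)}]_\hbar\in\hbar\DO^+_{2k+3}(\B)$.

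The second step is to convert this into a statement about symbols. Dividing by $\hbar^{2k+1}$, the requirement is that $\Omega^{(k)}/\hbar^{2k+1}+[\Delta^{(k)},\Psia^{(k)}]_\hbar$ belong to $\hbar\DO^+_2(\B)$, which by Lemma \ref{lem:Bari} is equivalent to vanishing of its extended principal symbol. Applying $\sigma_\hbar$, using Lemma \ref{lem:NAP} together with \eqref{eq:sigmah1}, and evaluating at $\hbar=1$, this becomes
\[
\bS^{(k)}+\pbracket{Q+\Pi}{Z}=0,
\]
where $Z:=\sigma_\hbar(\Psia^{(k)})|_{\hbar=1}$ is of degree $(+1)$. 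Thus the existence of a lift forces $\bS^{(k)}=-d_\Pi Z$; combined with the already-established fact that $[\bS^{(k)}]$ is independent of the choice of representative $\Deltaa^{(k)}$, this shows $[\bS^{(k)}]=0$ in $\Cc{H}^2(\Cc{M},Q+\Pi)$ is necessary.

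The third step is sufficiency. Given a degree $(+1)$ element $Z\in\Gamma(\hat{S}T_\cM)$ with $d_\Pi Z=-\bS^{(k)}$, I would invoke Proposition \ref{pro:chq} and set $\Psia^{(k)}:=\cqh(-Z)\in\hbar\DO^+(\B)$, so that $\sigma_\hbar(\Psia^{(k)})|_{\hbar=1}=-Z$ by construction. Running the preceding chain of equivalences in reverse then yields $\Deltaa^{(k+1)}:=\Deltaa^{(k)}+\hbar^{2k}\cqh(-Z)$ as a genuine Maurer-Cartan representative in $\hdo{}{2k+2}$ lifting $[\Deltaa^{(k)}]$. The main point requiring care is bookkeeping the $\hbar$-filtration and the parity-of-order constraints from self-adjointness (Corollary \ref{Section 2 Eqn Adjoint operator is sign to original}) across all terms of the expansion, so that everything genuinely remains in $\hbar\DO^+(\B)$ and not merely in $\hbar\DO(\B)$; the existence of the self-adjoint section $\cqh$ of the symbol map is what makes the sufficiency direction clean.
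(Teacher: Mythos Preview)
Your argument is correct and follows essentially the same route as the paper: expand $(\Delta^{(k+1)})^2$ with $\Delta^{(k+1)}=\Delta^{(k)}+\hbar^{2k}\Psia^{(k)}$, discard the $\hbar^{4k+1}$ term, and reduce via Lemma~\ref{lem:Bari} and Lemma~\ref{lem:NAP} to the symbol-level equation $\bS^{(k)}+\{Q+\Pi,Z\}=0$; the paper writes out the sufficiency direction explicitly (with $\Deltaa^{(k+1)}=\Deltaa^{(k)}-\hbar^{2k}\Phi^{(k)}$) and dismisses necessity as ``going backwards,'' whereas you spell out both. One trivial slip: in your sufficiency step, with $d_\Pi Z=-\bS^{(k)}$ you need $\sigma_\hbar(\Psia^{(k)})|_{\hbar=1}=Z$, so take $\Psia^{(k)}=\cqh(Z)$ rather than $\cqh(-Z)$.
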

\begin{proof}
Assume that the modular class vanishes. That is, there exists
a degree $(+1)$ element $X^{(k)}\in \Gamma(\hat{S}T_\Cc{M})$ such that
$\bS^{(k)}=\pbracket{ Q+\Pi}{X^{(k)}}$. Hence, by Lemma 
\ref{Section 2 Lemma Bracket of hbar elements}, we have

\begin{equation}
\bS^{(k)}_{\hbar}=\pbracketh{{\hbar} Q+\Pi_{\hbar}}{X^{(k)}_{\hbar}} .
\end{equation}

Choose a  self-adjoint operator $\Phi^{(k)} \in   \hbar\DO^+ (\B)$
satisfying the condition  that $\sigma_{\hbar} \big(\Phi^{(k)}\big)
=X^{(k)}_{\hbar}$. This is always possible according to 
Proposition \ref{pro:chq}.
Let
$$ \Deltaa^{(k+1)}= \Deltaa^{(k)}-\hbar^{2k}\Phi^{(k)} .$$
It is clear that
$$\prr_k [\Deltaa^{(k+1)}]=[\Deltaa^{(k)}] \in \hdo{}{2k}. $$
Here $[\Deltaa^{(k+1)}]$ denotes the class  of
$\Deltaa^{(k+1)}$ in $\hdo{}{2k+2}$, while $[\Deltaa^{(k)}]$
denotes the class  of $\Deltaa^{(k)}$ in $\hdo{}{2k}$.
It remains to prove that $[\Deltaa^{(k+1)}]$ is 
a  Maurer-Cartan element  in ${ \hdo{}{2k+2}}$.
Let $\Delta^{(k+1)}= \hbarQ+\Deltaa^{(k+1)}$.
Then 
$$ \Delta^{(k+1)}= \Delta^{(k)}-\hbar^{2k}\Phi^{(k)} .$$
Now we have
\begin{eqnarray*}
\Omega^{(k+1)}:&=& \big(\Delta^{(k+1)}\big)^2\\
& =&\F{\hbar}{2}[\Delta^{(k+1)},\Delta^{(k+1)}]_\hbar \\
& =&\F{\hbar}{2}[\Delta^{(k)}-\hbar^{2k}\Phi^{(k)}, \ \Delta^{(k)}-\hbar^{2k}\Phi^{(k)}]_\hbar \\
& =&\Omega^{(k)}-\hbar^{2k+1}[\Delta^{(k)}, \Phi^{(k)}]_\hbar
+\F{\hbar^{4k+1}}{2}[\Phi^{(k)}, \Phi^{(k)}]_\hbar .
\end{eqnarray*}
Thus it follows that
$$\F{\Omega^{(k+1)}}{\hbar^{2k+1}}
=\F{\Omega^{(k)}}{\hbar^{2k+1}}
-[\Delta^{(k)}, \Phi^{(k)}]_\hbar
+\F{\hbar^{2k}}{2} [\Phi^{(k)}, \Phi^{(k)}]_\hbar .$$
Hence, by applying the extended principal map $\sigma_\hbar$, we have
\begin{eqnarray*}
\sigma_\hbar \big( \F{\Omega^{(k+1)}}{\hbar^{2k+1}}\big)
&=&\sigma_\hbar \big( \F{\Omega^{(k)}}{\hbar^{2k+1}}\big)
- \pbracketh{ \sigma_\hbar \big( \Delta^{(k)} \big)}{\sigma_\hbar \big( \Phi^{(k)}\big)}\\
&=& \bS^{(k)}_{\hbar}-\pbracketh{{\hbar} Q+\Pi_{\hbar}}{X^{(k)}_{\hbar}}\\
&=&0.
\end{eqnarray*}
Therefore it follows that
 $\Omega^{(k+1)} \in  \DO^+_{2k+3}(\B)$.
According to  the equivalent relation
\eqref{eq:MCk}, $[\Delta^{(k+1)}]$ is
indeed a  Maurer-Cartan element  in ${ \hdo{}{2k+2}}$.
This proves one direction of the proposition.
The converse can be proved using the same argument by going backwards.
\end{proof}

\begin{remark}
For any $k=1, 2, \cdots$, the short exact sequence
\eqref{eq:dgla-exact-k} is in fact a square-zero extension of   dglas.
Recall that a square-zero extension of   dglas is a short exact sequence
of dglas
\begin{equation}
\label{eq:dglaext}
0\xto{}K\xto{i}L\xto{\pi}M\xto{}0
\end{equation}
such that $[ K, \ K ]=0$. One can prove that
for a  square-zero extension of dglas \eqref{eq:dglaext},
 the obstruction to lifting
a  Maurer-Cartan element $x$ in $M$ is a well defined class 
in 
\begin{equation}
\label{eq:2nd}
\Cc{H}^2 \big(K, d_K+ [ y, \ -] \big),
\end{equation}
where $y\in L$ is any lift of $x$, i.e. $\pi (y)=x$. 
Note that the differential $d_K+ [ y, \ -]$,
and hence the
cohomology group $\Cc{H}^2 \big(K, d_K+ [ y, \ -] \big)$ is independent of the choice of the lift $y$.
One  can prove that for the square-zero extension of 
  dglas \eqref{eq:dgla-exact-k}, the corresponding
 cohomology group \eqref{eq:2nd}
is isomorphic to the second Poisson cohomology group 
 $\Cc{H}^2 (\Cc{M}, Q+ \Pi)$. See
\cite{Manetti:book} and \cite[Section 3.1.3]{ MR3370863}
 for  references on a related 
problem (the case when $K$ is an abelian ideal of  $L$).
\end{remark}

For each $\bX\in\Gamma(\hat{S}T_\Cc{M})$,
by $\Op(\bX)$, we denote 
 the set of all self-adjoint  operators
 $\Delta\in\hbar\DO^+ (\B)$ such that $\sigma_\hbar\left(\Delta\right) = \bX_\hbar$, where $\sigma_\hbar$ is the extended principal symbol map  \eqref{Section 2 Defn eqn Zero extended symbol map definition}, and $\bX_\hbar$ is defined as in  \eqref{Section 2 Eqn hbar map on tensor fields}. The subset $\Op(\bX)$ is called the operator class of the \formal series $\bX$.

By considering the particular case of $k=1$, Proposition \ref{pro:Manchester}
implies the following

\begin{corollary}
\label{cor:LUG}
For any  $(-1)$-shifted derived Poisson manifold $(\Cc{M},Q,\Pi)$,
its modular class is the    obstruction class to the existence of
an operator $\Deltaa  \in \Op(\Pi)$ satisfying the condition that
 $\Delta^2\in \hbar\DO^+_5 (\B)$,  where
$\Delta=\hbarQ+\Deltaa$.
\end{corollary}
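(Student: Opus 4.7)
The plan is to deduce this corollary as the specialization to $k=1$ of Proposition~\ref{pro:Manchester}, combined with the equivalence~\eqref{eq:MCk} at $k=2$. By Definition~\ref{def:modular-Poisson}, the modular class of the derived Poisson manifold $(\Cc{M}, Q, \Pi)$ is precisely the modular class of the canonical Maurer--Cartan element $[\Deltaa^{(1)}] = \phi^{-1}(\Pi) \in \MCc{\hdo{}{2}}$. Proposition~\ref{pro:Manchester} applied with $k=1$ then asserts that this class is the obstruction to lifting $[\Deltaa^{(1)}]$ to a Maurer--Cartan element in $\hdo{}{4}$, so it remains only to reformulate this lifting problem in the operator-theoretic language used in the corollary.

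To that end, I would observe that the set of operator representatives of lifts of $[\Deltaa^{(1)}]$ to classes in $\hdo{}{4}$ coincides with $\Op(\Pi)$. Indeed, any representative $\Deltaa$ of such a lift must project to $\phi^{-1}(\Pi) \in \hdo{}{2}$. Since any two such representatives differ by an element of $\hbar\DO^+_2(\B)$, which by Lemma~\ref{lem:Bari} has vanishing extended principal symbol, $\sigma_\hbar(\Deltaa)$ is independent of the choice of representative and must equal $\Pi_\hbar$; equivalently $\Deltaa \in \Op(\Pi)$. Conversely, any $\Deltaa \in \Op(\Pi)$ yields a well-defined class $[\Deltaa] \in \hdo{}{4}$ that projects to $\phi^{-1}(\Pi)$.

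Finally, invoking the equivalence~\eqref{eq:MCk} at $k=2$ translates the Maurer--Cartan condition $[\Deltaa] \in \MCc{\hdo{}{4}}$ into the concrete statement $\Delta^2 \in \hbar\DO^+_5(\B)$, where $\Delta = \hbar Q + \Deltaa$. Stringing the three steps together, the modular class vanishes if and only if there exists some $\Deltaa \in \Op(\Pi)$ with $\Delta^2 \in \hbar\DO^+_5(\B)$, which is exactly the content of the corollary. Since all the substantive work---constructing the obstruction class and proving that its vanishing is equivalent to the existence of a lift---has already been carried out in Proposition~\ref{pro:Manchester}, I do not expect any serious obstacle here; the only point requiring care is the bookkeeping identification of $\Op(\Pi)$ with the set of lift representatives, and this is handled cleanly by Lemma~\ref{lem:Bari}.
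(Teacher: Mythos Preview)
Your proposal is correct and follows exactly the approach indicated by the paper, which simply states that the corollary is ``the particular case of $k=1$'' of Proposition~\ref{pro:Manchester} without spelling out any details. You have carefully filled in the bookkeeping the paper leaves implicit---the identification of $\Op(\Pi)$ with the fibre over $\phi^{-1}(\Pi)$ via Lemma~\ref{lem:Bari}, and the translation of the Maurer--Cartan condition via~\eqref{eq:MCk} at $k=2$---so your argument is a faithful and complete expansion of what the paper intends.
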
 

Now we are ready for the proof of the main theorem.

\begin{proof}[Proof of Theorem \ref{thm:main}]
It is an immediate consequence  of 
Lemma \ref{lem:Munich}, Definition \ref{def:modular}, and
Proposition \ref{pro:Manchester}.
\end{proof}

\subsection{$(-1)$-shifted dg Poisson manifolds}

Let $\Cc{M}$ be a graded manifold equipped with a
 single degree $(+1)$ Poisson bracket
 $\lambda_2:\ci(\Cc{M})^{\otimes 2}\rightarrow\ci(\Cc{M})$.
This is essentially the classical case \cite{MR3685170}.
The Poisson structure is defined by a degree $(+1)$ element
 $\Pi_2 \in\Gamma(S^2T_\Cc{M})$ such that $\allbracket\Pi_2,\Pi_2\arrbracket =
 0$. 
It is denoted  by $(\cM, \Pi_2)$ and is called
a  $(-1)$-shifted Poisson manifold.

Let  $\Delta_2\in \DO^+(\B)$ be a degree $(+1)$ 
self-adjoint second order differential operator 
 such that $\sigma_2 (\Delta_2)=\Pi_2$.
Let
    \begin{equation*}
  \Delta = \hbar^2\Delta_2.
    \end{equation*}
Since $\Delta_2 \circ \Delta_2=\F{1}{2}[\Delta_2, \ \Delta_2]$
 is a first order anti-self-adjoint operator, 
by
 Proposition \ref{Section 2 Prop Lie derivative are first order asa},
 it is given by the Lie derivative 
\begin{equation}
\label{eq:NYC}
\Delta_2 \circ \Delta_2 =\Cc{L}_{X_1},
\end{equation}
 where $X_1 \in\Gamma(T_\Cc{M})$ is a degree $2$ vector field.
Hence $\F{1}{\hbar^3} \Delta^2 =\hbar \Delta_2 \circ \Delta_2=
\hbar\Cc{L}_{X_1}$.
 Thus  the modular class is $[X_1 ]\in\Cc{H}^2 (\Cc{M},\Pi_2)$.

More generally, if $(\Cc{M}, Q)$ is a dg manifold equipped
with the degree $(+1)$-Poisson bracket determined by
a degree $(+1)$ element $\Pi_2\in\Gamma(S^2T_\Cc{M})$ such that
$$\allbracket\Pi_2, \ \Pi_2\arrbracket = 0, \ \ \ \allbracket Q, \  \Pi_2\arrbracket = 0,$$ 
then $(\Cc{M}, Q, \Pi_2)$ is called  a  \emph{$(-1)$-shifted dg Poisson manifold}.
Let $\Delta_2\in \DO^+(\B)$ be an operator as before, and
$\Delta=\hbar\Cc{L}_Q+\hbar^2 \Delta_2$.
Then
$$\Omega:=\Delta^2=\hbar^3 [\Cc{L}_Q, \Delta_2]+\hbar^4\Delta^2_2
=\hbar^3 [\Cc{L}_Q, \Delta_2]+\hbar^4 \Cc{L}_{X _1}.$$
Therefore 
\begin{equation}
\sigma_h (\F{ \Omega}{\hbar^3})=X_0 +\hbar X_1, \label{eq:X01} 
\end{equation}
where $X_0=\sigma_0 ([\Cc{L}_Q, \Delta_2]) \in C^\infty (\Cc{M})$.
Thus $[X_0 +  X_1 ]\in  \Cc{H}^2 (\Cc{M}, Q+\Pi_2)$
is the modular class of the $(-1)$-shifted   dg  Poisson manifold
$(\Cc{M}, Q, \Pi_2)$.
According to Corollary \ref{cor:LUG},  
it is the obstruction class to the existence of $\Deltaa
 \in \Op(\Pi)$ such that $(\hbarQ+\Deltaa)^2\in \hbar\DO^+_5 (\B)$. 
Note that the vanishing of the modular class
 $[X_0 +  X_1 ]\in  \Cc{H}^2 (\Cc{M}, Q+\Pi_2)$ does not necessarily
mean that $(\cM, Q, \Pi_2)$ is quantizable. However,  if we assume
 furthermore that
$X_0 +  X_1$ can be expressed as a particular type of
coboundary, i.e.
 there exists a function  $f\in C^\infty (\Cc{M})$ such that
\begin{equation}
\label{eq:JFK}
X_0 +  X_1=\{Q+\Pi_2, \ f\} .
\end{equation}
Then 
$$ \Delta'=\hbar\Cc{L}_Q+\hbar^2 (\Delta_2 -f)$$
is clearly a $\BV_\infty$-quantization of  $(\Cc{M}, Q, \Pi_2)$.

\begin{proposition}
\label{pro:EWR}
Let $(\Cc{M}, Q, \Pi_2)$ be a $(-1)$-shifted   dg  Poisson manifold,
$\Delta_2\in \Op(\Pi)$, and 
  $\Delta=\hbar\Cc{L}_Q+\hbar^2 \Delta_2$.
Assume that there exists a function
 $f\in C^\infty (\Cc{M})$ such that Eq. \eqref{eq:JFK} holds
 where $X_0$ and $X_1$ are as in
\eqref{eq:X01}. Then $(\Cc{M}, Q, \Pi_2)$ is quantizable.
\end{proposition}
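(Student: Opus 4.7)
The plan is to verify that the operator
$$\Delta' = \hbar \Cc{L}_Q + \hbar^2 (\Delta_2 - f),$$
as suggested in the discussion preceding the proposition, is an honest $\BV_\infty$-operator quantizing $(\Cc{M}, Q, \Pi_2)$. The heuristic behind the formula is that Corollary \ref{cor:LUG} identifies $[X_0 + X_1]$ as the obstruction to quantizing at this stage, so if the class is the coboundary $\{Q + \Pi_2, f\}$, then subtracting $\hbar^2 f$ from the original candidate $\Delta$ should kill the obstruction outright at the quantum level.

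First I would check the structural conditions, which are routine. Since multiplication by $f \in \ci(\Cc{M})$ is a zeroth-order operator, $\Delta' \in \hbar\DO_0$ has degree $(+1)$ and extended principal symbol $\hbar Q + \hbar^2 \Pi_2$, which specializes at $\hbar = 1$ to $Q + \Pi_2$ as required by Definition \ref{Section 4 Defn Quantisation of Poisson structure}. Self-adjointness in the sense of Definition \ref{Section 2 Defn Adjoint operator} follows from the fact that $\Cc{L}_Q$ is first-order (hence anti-self-adjoint), $\Delta_2$ is second-order self-adjoint by hypothesis, and multiplication by a homogeneous function is self-adjoint as a zeroth-order operator.

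The core of the proof is the verification that $(\Delta')^2 = 0$. Matching degrees in the hypothesis $X_0 + X_1 = \{Q + \Pi_2, f\}$ forces $|f| = 1$, so $f^2 = 0$ by graded commutativity. Expanding and using $\Cc{L}_Q^2 = 0$ together with $\Delta_2^2 = \Cc{L}_{X_1}$ from \eqref{eq:NYC}, a direct calculation gives
$$(\Delta')^2 = \hbar^3 \big([\Cc{L}_Q, \Delta_2] - [\Cc{L}_Q, f]\big) + \hbar^4 \big(\Delta_2^2 - [\Delta_2, f]\big).$$
Now $[\Cc{L}_Q, \Delta_2]$ is exactly the order-zero operator of multiplication by $X_0$ from \eqref{eq:X01}, and $[\Cc{L}_Q, f] = Q(f)$. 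The commutator $[\Delta_2, f]$ is a first-order operator in $\DO^+$ with principal symbol $\{\Pi_2, f\}$, so by Proposition \ref{Section 2 Prop Lie derivative are first order asa} it equals $\Cc{L}_{\{\Pi_2, f\}}$. The hypothesis splits according to symmetric weight into $X_0 = Q(f)$ (function part) and $X_1 = \{\Pi_2, f\}$ (vector-field part), and substituting these identities makes both the $\hbar^3$ and $\hbar^4$ terms cancel.

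The main obstacle is really just the careful graded-sign bookkeeping needed to identify $[\Delta_2, f]$ with a pure Lie derivative of the Hamiltonian vector field $\{\Pi_2, f\}$, and to see that the coboundary hypothesis decomposes cleanly according to symmetric weight into a function part and a vector-field part. Once these are in hand, the cancellation $(\Delta')^2 = 0$ is algebraically transparent, and Theorem \ref{Section 2 Theorem BV generates Poisson structure} confirms that $\Delta'$ quantizes the prescribed derived Poisson structure.
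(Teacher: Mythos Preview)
Your proposal is correct and follows exactly the approach the paper takes: the paper's proof is the sentence preceding the proposition, which simply asserts that $\Delta' = \hbar\Cc{L}_Q + \hbar^2(\Delta_2 - f)$ is ``clearly a $\BV_\infty$ quantization,'' and you have supplied the verification of $(\Delta')^2 = 0$ that the paper leaves implicit. One small point worth making explicit: your claim that $[\Cc{L}_Q, \Delta_2]$ is genuinely of order zero (and not merely has $X_0$ as its $\sigma_0$) relies on the parity argument of Corollary~\ref{Section 2 Eqn Adjoint operator is sign to original}---since $\{Q,\Pi_2\}=0$ forces order $\leq 1$, and self-adjointness then forces even order---which you might cite alongside Proposition~\ref{Section 2 Prop Lie derivative are first order asa}.
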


 In particular, as an immediate consequence, we recover the following
theorem due to Khudaverdian-Peddie.

\begin{corollary}[\cite{MR3685170}]
A  $(-1)$-shifted  Poisson manifold
 $(\Cc{M}, \Pi_2)$ is  quantizable if and only if
its modular class vanishes.
\end{corollary}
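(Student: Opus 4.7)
The plan is to prove the two directions of the biconditional by specializing results already established earlier in this section. The corollary is essentially a direct combination of Proposition~\ref{pro:EWR} (which gives sufficiency) and Corollary~\ref{cor:LUG} (which supplies necessity), once we set $Q=0$ and recognize that the modular class of the $(-1)$-shifted Poisson manifold $(\Cc{M},\Pi_2)$ coincides with $[X_1]\in\Cc{H}^2(\Cc{M},\Pi_2)$ determined by $\Delta_2\circ\Delta_2=\Cc{L}_{X_1}$.

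For the sufficiency direction, I would invoke Proposition~\ref{pro:EWR} with $Q=0$. Setting $Q=0$ forces $X_0=\sigma_0([\Cc{L}_Q,\Delta_2])=0$, so the hypothesis $X_0+X_1=\{Q+\Pi_2,f\}$ collapses to $X_1=\{\Pi_2,f\}$ for some $f\in\ci(\Cc{M})$. But this is precisely the statement that $[X_1]=0$ in $\Cc{H}^2(\Cc{M},\Pi_2)$, i.e.\ that the modular class of $(\Cc{M},\Pi_2)$ vanishes. Proposition~\ref{pro:EWR} then produces the quantization $\Delta'=\hbar^2(\Delta_2-f)$ explicitly.

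For the necessity direction, I would use Corollary~\ref{cor:LUG}. Suppose $\Delta\in\hbar\DO^+(\B)$ is a quantization of $(\Cc{M},\Pi_2)$. By Definition~\ref{Section 4 Defn Quantisation of Poisson structure}, $\Delta$ is self-adjoint with $\Delta^2=0$ and $\sigma_\hbar(\Delta)\big|_{\hbar=1}=\Pi_2$. Since $\Pi=\Pi_2$ is concentrated in weight two, the $\hbar$-assignment \eqref{Section 2 Eqn hbar map on tensor fields} forces $\sigma_\hbar(\Delta)=\hbar^2\Pi_2=(\Pi_2)_\hbar$, so $\Delta\in\Op(\Pi_2)$. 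Taking $\Deltaa=\Delta$ in Corollary~\ref{cor:LUG} (with $Q=0$) yields $(\hbar Q+\Deltaa)^2=\Delta^2=0$, which trivially lies in $\hbar\DO^+_5(\B)$. Since Corollary~\ref{cor:LUG} identifies the modular class as the exact obstruction to the existence of such a $\Deltaa\in\Op(\Pi_2)$, this obstruction must vanish.

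There is no genuine obstacle here: both implications are clean specializations of results already in hand. The only mild subtlety worth flagging is checking that a quantization operator really belongs to $\Op(\Pi_2)$, which uses that in the purely quadratic case the extended principal symbol is concentrated in a single weight, thereby exactly matching the $\hbar$-enhancement $(\Pi_2)_\hbar=\hbar^2\Pi_2$. Once that identification is made, no further lifting through the higher quotients $\hdo{}{2k}$ is needed, since the quantization itself bypasses the iterative construction of Lemma~\ref{lem:Munich} at once.
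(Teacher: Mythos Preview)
Your approach is essentially the same as the paper's: both directions come from Corollary~\ref{cor:LUG} and Proposition~\ref{pro:EWR} specialized to $Q=0$. Your necessity argument is correct and more detailed than the paper's (you explicitly verify $\Delta\in\Op(\Pi_2)$ via the weight concentration of $\sigma_\hbar(\Delta)$, which the paper leaves implicit).

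There is, however, a small but genuine gap in your sufficiency direction. You claim that ``$X_1=\{\Pi_2,f\}$ for some $f\in\ci(\Cc{M})$'' is \emph{precisely} the statement that $[X_1]=0$ in $\Cc{H}^2(\Cc{M},\Pi_2)$. One implication is trivial, but the converse is not: vanishing of $[X_1]$ only yields $X_1=\{\Pi_2,Y\}$ for some degree-$1$ element $Y\in\Gamma(\hat{S}T_\Cc{M})$, not a priori a function. To pass from a general primitive $Y$ to a function $f$ you need the \emph{weight-counting} step the paper makes explicit: the bracket $\{\Pi_2,-\}$ raises weight by one (weight $2+k-1=k+1$ on $\Gamma(S^kT_\Cc{M})$), while $X_1$ is a vector field, i.e.\ of weight $1$; hence only the weight-$0$ component $Y_0=f\in\ci(\Cc{M})$ contributes, and $X_1=\{\Pi_2,f\}$. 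Without this observation the hypothesis of Proposition~\ref{pro:EWR} is not verified, and the sufficiency argument does not close.
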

\begin{proof}
If $(\Cc{M}, \Pi_2)$ is  quantizable, then its modular class
must vanish according to Corollary \ref{cor:LUG}.
Conversely, assume that the modular class of $(\Cc{M}, \Pi_2)$
vanishes.  By Eq. \eqref{eq:NYC}, the modular cocycle
is a degree $2$ vector field $X_1\in \sections{T_\cM}$. Since it
is a coboundary by assumption, 
by the weight counting,  we have $  X_1=\{\Pi_2, \ f\}$ for
some function  $f\in C^\infty (\Cc{M})$.
According to Proposition \ref{pro:EWR}, $(\Cc{M}, \Pi_2)$ is indeed  quantizable. 
\end{proof}

Below is an example of  $(-1)$-shifted  Poisson manifold with
 non-vanishing modular class, which is adapted from \cite{MR3685170}.

\begin{example}
\label{ex:Hovik}
Let $\Cc{M} = \mathbb{R}^2[-1]\times\mathbb{R}[-2]$, and choose global coordinates $\xi,\tau, z$ with assigned degrees
	\begin{equation*}
 |\xi| = |\tau| = -1,\qquad |z| = -2.
	\end{equation*}
Define two vector fields on $\Cc{M}$:
	\begin{gather*}
	P = \tau\F{\p}{\p\xi} + \tau\xi\F{\p}{\p z}, \qquad Q = \F{\p}{\p\tau}.
	\end{gather*}
Then $|P|=0$ and $|Q|=1$.
It is simple to check that
$$[Q, Q]=0,   \qquad [P, P]=0,  \qquad [P, Q]=0.$$
 Hence $\Pi_2=P\odot Q$ is a  degree $(+1)$ contravariant tensor
satisfying $\allbracket\Pi_2, \ \Pi_2\arrbracket = 0$.
Then
	\begin{equation}
\label{eq:Hovik}
	\Delta_2 = \F{1}{2}
\big(\Cc{L}_P \circ \Cc{L}_Q + \Cc{L}_Q \circ \Cc{L}_P\big) \in \DO^+(\B) 
	\end{equation}
 is a degree $(+1)$
self-adjoint second order differential  operator  such that $\sigma_2 (\Delta_2)=\Pi_2$.
 The operator has the coordinate expression
	\begin{equation*}
	 \Delta_2 = \tau\p_\xi\p_\tau + \tau\xi\p_z\p_\tau + \F{1}{2}\xi\p_z + \F{1}{2}\p_\xi,
	\end{equation*}
from which we can explicitly calculate the square:
	\begin{equation*}
	\Delta_2 \circ \Delta_2 = \F{1}{4}\p_z.
	\end{equation*}	
Hence $X_1=\F{1}{4}\p_z$.  
Note that $\p_z$ cannot be a Hamiltonian vector field, 
since the $(+1)$-shifted Poisson bracket on $\Cc{M} $ reads
	\begin{equation*}
	\{\xi,\tau\} = \pm\tau,\qquad \{z,\tau\} = \pm\tau\xi,
	\end{equation*}
which is always proportional to $\tau$. Hence $[\p_z] \in  \Cc{H}^2 (\Cc{M}, \Pi_2)$
 is a non-trivial modular class of the $(-1)$-shifted Poisson  manifold $(\Cc{M}, \Pi_2)$. As a consequence, $(\cM, \Pi_2)$ is not quantizable.
\end{example}

\begin{example}
As in Example \ref{ex:Hovik}, we consider the $(-1)$-shifted   dg
  Poisson manifold $(\Cc{M}, Q, \Pi_2)$.
Let $\Delta=\hbar\Cc{L}_Q+\hbar^2 \Delta_2$, where
$\Delta_2$ is as in  \eqref{eq:Hovik}.
Then
$$[\Cc{L}_Q, \Delta_2]=\F{1}{2}[\Cc{L}_Q, \ \Cc{L}_P \circ \Cc{L}_Q + \Cc{L}_Q \circ \Cc{L}_P ]
=\Cc{L}_Q \circ \Cc{L}_P \circ\Cc{L}_Q=(\Cc{L}_{[Q, P]} +\Cc{L}_P \circ  \Cc{L}_Q)
\circ \Cc{L}_Q=0 .$$
Therefore   $X_0=\sigma_0 ([\Cc{L}_Q, \Delta_2])=0$,
and $[\p_z] \in  \Cc{H}^2 (\Cc{M}, Q+ \Pi_2)$
 is a non-trivial modular class of the $(-1)$-shifted
dg  Poisson  manifold $(\Cc{M},  Q, \Pi_2)$.
\end{example}

\subsection{$(-1)$-shifted derived Poisson manifolds and $\L$-algebroids}

Following the conventions of \cite{MR2840338, MR2163405, MR1327129, MR4091493},
an $\L$-algebroid is a $\ZZ$-graded vector bundle
 $\Cc{\aV}\rightarrow \Cc{M}$  together with
\begin{itemize}
\item a sequence of multi-liner maps $\lambda_n:\bigwedge^n\Gamma(\Cc{\aV})\rightarrow \Gamma(\Cc{\aV})$ of degree $2-n$, $n\geq 1$, which determine an $\L$-structure on $\Gamma(\Cc{\aV})$; and
\item a sequence of bundle maps $\rho_n:\bigwedge^n\Cc{\aV}\rightarrow T_\Cc{M}$ of degree $1-n$, $n\geq0$, called multi-anchor maps, such that

\begin{itemize}
\item[(i)]
its induced maps $\rho_n :  \bigwedge^n \sections{\Cc{\aV}}\to
\XX (\Cc{M}), \ n\geq 1$
 define an $\L$-morphism
from $\Gamma(\Cc{\aV} )$ to the dgla  $\big(\XX (\Cc{M}), \rho_0)$,
 and  moreover,
\item[(ii)] the higher Leibniz rules hold:
        \begin{equation}
\label{eq:Lebniz}
        \lambda_n(v_1,\ldots,v_{n-1},fv_n) = \rho_{n-1}(v_1,\ldots,v_{n-1})(f)v_n
        + (-1)^{|f|(n+|v_1|+\cdots+|v_{n-1}|)}f\lambda_n(v_1,\ldots,v_n),
        \end{equation}
for all $n\geq 1$,  $v_1,\ldots,v_n\in\Gamma(\Cc{\aV})$ and $f\in\ci(\Cc{M})$.
\end{itemize}

\end{itemize}

When $\cM$ is an ordinary manifold $M$ (being considered
of degree zero), and the vector bundle $\Cc{\aV}=\oplus_{i\geq 0}\Cc{\aV}^i\to
\cM$
is a non-negative graded vector bundle,
this reduces to the  $L_\infty$ algebroids
 studied by
Laurent-Gengoux et. al. \cite{MR4164730}.
In particular, if  $\Cc{\aV}$ is a usual vector bundle
 concentrated in degree $0$, it becomes an  ordinary  Lie
algebroid over $M$. Here, we adapt more general
definition by allowing $\cM$ to be $\ZZ$-graded as well.
For references on $L_\infty$ algebroids, see~\cite{MR4091493, MR2757715, MR2695305, MR1854642,  MR4164730, MR3300319, MR3277952, MR3313214,  MR3631929, MR3584886}
and the references therein.

Similar to the ordinary Lie-Poisson construction, 
any  $L_\infty$-algebroid gives rise to
a $(-1)$-shifted derived Poisson manifold
in a natural fashion. 

The following proposition  is standard. 
See \cite{arxiv.1808.10049, MR4091493, MR4235776}. 

\begin{proposition}
\label{pro:Linfinity-alg}
An $\L$-algebroid structure on $\Cc{\aV}$ is equivalent to a linear 
 $(-1)$-shifted derived Poisson manifold structure on $\Cc{\aV}^\vee[-1]$.
\end{proposition}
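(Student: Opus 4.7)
The plan is to set up an explicit local-coordinate bijection and then verify that it lifts to a coordinate-independent correspondence by matching up the defining axioms on both sides. First I would fix local coordinates $(x^i)$ on $\cM$ together with a local frame $(e_a)$ of $\Cc{\aV}[1]$, giving dual fiber coordinates $(\xi_a)$ on $\Cc{\aV}^\vee[-1]$; fiber-linear functions are then identified with $\Gamma(\Cc{\aV}[1])$ and fiber-constant functions with $\ci(\cM)$. The defining feature of a \emph{linear} $(-1)$-shifted derived Poisson structure is that each multi-bracket $\lambda_n$ on $\ci(\Cc{\aV}^\vee[-1])$ lowers fiberwise polynomial degree by exactly $n-1$. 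In local form this forces
\[
\Pi_n \;=\; \tfrac{1}{n!}\,C^b_{a_1\cdots a_n}(x)\,\xi_b\,\partial^{a_1}\!\odot\cdots\odot\partial^{a_n}
\;+\;\tfrac{1}{(n-1)!}\,\rho^i_{a_1\cdots a_{n-1}}(x)\,\partial^{a_1}\!\odot\cdots\odot\partial^{a_{n-1}}\odot\partial_i,
\]
so that $\Pi_n$ is completely determined by the structure functions $C^b_{a_1\cdots a_n}$ and $\rho^i_{a_1\cdots a_{n-1}}$, which is precisely the local data of an $n$-ary bracket $\lambda_n$ on $\Gamma(\Cc{\aV})$ and of the $(n-1)$-ary anchor $\rho_{n-1}$.

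The higher Leibniz rule \eqref{eq:Lebniz} for $\lambda_n$ is then equivalent to the graded Leibniz property of the Poisson multi-bracket on $\ci(\Cc{\aV}^\vee[-1])$, applied to a product $fv_n$ in the last slot: isolating the two linearity-allowed components recovers \eqref{eq:Lebniz} with the correct Koszul sign, and conversely \eqref{eq:Lebniz} together with the $\ci(\cM)$-bilinearity of $\rho_{n-1}$ lets one extend the bracket consistently from the generators $\Gamma(\Cc{\aV}[1])\oplus\ci(\cM)$ to all of $\ci(\Cc{\aV}^\vee[-1])$ as a derivation in each argument. This gives a bijection, at the level of tensor data, between linear $(-1)$-shifted derived Poisson tensors and sequences $(\lambda_n,\rho_{n-1})$.

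The key step is to show that the Maurer-Cartan equation \eqref{Section 1 Eqn Maurer-Cartan equation for poisson structure} for $Q+\Pi$ is equivalent, term by term in arity and fiber-polynomial degree, to the full system of $\L$-algebroid axioms. Using the bookkeeping of Lemma~\ref{Section 2 Lemma Bracket of hbar elements} on the Schouten bracket $\allbracket\cdot,\cdot\arrbracket$, I would decompose $\allbracket Q+\Pi, Q+\Pi\arrbracket$ according to both the arity (symmetric degree) and the number of $\xi$-derivatives. The maximal-fiber-derivative component reproduces precisely the higher Jacobi identity of the $\L[1]$-algebra structure $\{\lambda_n\}$ on $\Gamma(\Cc{\aV}[1])$, while the component with one fewer $\xi$-derivative yields the compatibility axioms expressing $(\rho_n)_{n\geq 1}$ as a morphism of $\L$-algebras into the dgla $\big(\XX(\cM),\rho_0\big)$ (with $\rho_0$ arising from the weight-$1$ part of $Q$ along the base).

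The main obstacle is keeping Koszul signs consistent across the three shifts $\Cc{\aV}\leadsto \Cc{\aV}[1]\leadsto \Cc{\aV}^\vee[-1]$, and in particular reconciling the symmetric combinatorics of $\allbracket\cdot,\cdot\arrbracket$ on $\Gamma(\hat{S}T_{\Cc{\aV}^\vee[-1]})$ with the skew-symmetric conventions used to write $\lambda_n$ and the Leibniz rule \eqref{eq:Lebniz}. Once a sign convention is pinned down, both directions of the equivalence follow: from $\Pi$ one reads off $(\lambda_n,\rho_n)$ from the local formula and checks the axioms; conversely, from an $\L$-algebroid one defines $\Pi_n$ patchwise by the same local formula, verifies coordinate-independence using the naturality of the $\L$-algebroid structure under changes of frame, and then confirms the Maurer-Cartan equation via the same weight-by-weight translation.
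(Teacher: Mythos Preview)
Your proposal is correct and follows essentially the same approach as the paper's own treatment. In fact the paper does not give a proof at all: it declares the proposition ``standard'' (citing \cite{arxiv.1808.10049, MR4091493, MR4235776}) and only sketches one direction, writing down precisely the local formula you arrived at for $\Pi_n$ in terms of the structure functions $C^j_{i_1\cdots i_n}$ and $\rho^a_{i_1\cdots i_{n-1}}$ (this is the paper's equation~\eqref{eq:pi}), and noting that the multi-brackets $\tilde{\lambda}_n$ are obtained by extending the $L_\infty$-brackets via the Leibniz rule using the multi-anchors. Your outline of the Maurer--Cartan verification by decomposing in arity and fiber-polynomial weight, and your identification of the Leibniz rule~\eqref{eq:Lebniz} with the derivation property of the Poisson multi-bracket, are exactly the kind of detail the paper omits; so your write-up is, if anything, more complete than what appears in the text.
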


Let us explain the word  \emph{linear}.
Note that under the isomorphism:
\begin{equation}
\label{eq:YYZ}
\ci(\Cc{\aV}^\vee[-1]) \cong \sections{ { \hat{S} }^{\bullet}( \Cc{\aV}[1] )},
\end{equation}
 $\ci(\Cc{\aV}^\vee[-1])$,
 besides the underlying $\mathbb{Z}$-grading,
  possesses a natural non-negative
 $\mathbb{N}$-grading by the weight,
 which we denote by $N$-degree. 
A $(-1)$-shifted derived Poisson manifold 
structure on $\Cc{\aV}^\vee[-1]$
is said to be {\em linear}, if   for each $n\geq 1$,
the Poisson multi-bracket $\tilde{\lambda}_n:\ci(\Cc{\aV}^\vee[-1])^{\otimes n}\rightarrow\ci(\Cc{\aV}^\vee[-1])$  is of $N$-degree  $1-n$.
More precisely, under the isomorphism \eqref{eq:YYZ},
$\tilde{\lambda}_n$ is of the form:
\begin{equation}
\label{eq:Scalea}
 \tilde{\lambda}_n : \sections{ { S }^{k_1}( \Cc{\aV}[1] )}\times \cdots\times\sections{S^{k_n }(  \Cc{\aV}[1] )}\to
\sections{S^{k_1+ \cdots k_n +(1-n)}( \Cc{\aV}[1] )}, 
\end{equation}
for any non-negative integers $k_1, \cdots, k_n$.
In this case, we say that $\Pi_n$ is of  $N$-degree  $1-n$.

Below we  describe briefly  one direction of the construction 
in Proposition \ref{pro:Linfinity-alg}. The construction of the other
 direction can be  carried out by going backwards.

Let $ \Cc{\aV}\to \cM$ be an $L_\infty$-algebroid with the
structure maps
 $(\lambda_n)_{n\geqslant 1}$ and $(\rho_n)_{n\geqslant 0}$ as
in the beginning of the subsection. 
Then $\sections{ \Cc{\aV}[1]}$ is an $L_\infty[1]$-algebra.
The corresponding degree $(+1)$-Poisson multi-brackets
$\tilde{\lambda}_n$ as in  \eqref{eq:Scalea} are essentially
the natural extension of $(\lambda_n)_{n\geqslant 1}$ 
 by the Leibniz rule in Definition \ref{def:derived}
 with the help of the multi-anchor maps.
Note that
%
the $0$-th anchor map $\rho_0$ defines a 
homological vector field $Q_\cM\in \XX (\cM)$.
The unary bracket $\lambda_1: \sections{ \Cc{\aV}}\to\sections{ \Cc{\aV}}$ and $\rho_0$
 are compatible:
\[ \lambda_1(fa)=Q_\cM(f)a+(-1)^{|f|}f \lambda_1(a),\quad\forall 
a\in\sections{ \Cc{\aV}},f\in C^\infty (\cM) .\]
Hence $ \sections{ \Cc{\aV}}$ is a dg module over $( C^\infty (\cM), Q_\cM)$.
As a consequence,  $ \Cc{\aV}\to \cM$ is a dg vector bundle \cite{MR2709144, MR3319134},
 which implies that $ \Cc{\aV}^\vee\to \cM$  is a dg vector bundle as well.
Hence ${\Cc{\aV}^\vee[-1]}$ is, naturally,  a dg manifold, whose
corresponding homological vector field is denoted $Q$.
As before, we write $\Pi_1=Q$, and the corresponding
degree $(+1)$ Poisson tensor
  $\Pi\in \sections{ \hat{S}(T_{\Cc{\aV}^\vee[-1]})}$
on ${\Cc{\aV}^\vee[-1]}$ is
        \begin{equation*}
        \Pi = \sum_{n\geq 1}\Pi_n,\quad |\Pi| = 1,\quad N(\Pi_n) = 1-n.
        \end{equation*}

Choose a local coordinate chart $(x^a)$ on $\cM$ and a local frame
 $(e_1,\ldots, e_n)$ on $\Cc{\aV}$. Let $(f^1, \ldots,f^n)$
be its corresponding  dual local frame on $\Cc{\aV}^\vee$.
These data determine a  coordinate chart on ${\Cc{\aV}^\vee[-1]}$:
\begin{equation}
\label{eq:localchart1}
(x^a \ ,\xi_i),
\end{equation}
and  as well as a coordinate chart on ${\Cc{\aV}[1]}$:
\begin{equation}
\label{eq:localchart2}
(x^a \ ,\eta^i).
\end{equation}

 Under the local  coordinate chart \eqref{eq:localchart1},
$\Pi_n$ can be  written explicitly as follows
    \begin{equation}
\label{eq:pi}
        \Pi_n = \frac{1}{(n-1)!}\rho^a_{i_1\cdots i_{n-1}}(x)
 \frac{\p}{\p\xi_{i_1}}\cdots\odot\frac{\p}{\p\xi_{i_{n-1}}} \odot \frac{\p}{\p x^a}
 + \frac{1}{n!}C^j_{i_1\cdots i_n}(x)\xi_j\frac{\p}{\p\xi_{i_1}}\odot \cdots \odot\frac{\p}{\p\xi_{i_n}},
    \end{equation}
where  the structure functions $( \rho^a_{i_1\cdots i_{n-1}} (x) )$ 
and $(C^j_{i_1\cdots i_n}(x))$ are
determined by the corresponding multi-anchors and multi-brackets:
    \begin{equation*}
        \rho_{n-1} (e_{i_1}\wedge\cdots \wedge e_{i_{n-1}}) = \pm \rho^a_{i_1\cdots i_{n-1}}(x)
\frac{\p}{\p x^a} ,\qquad \lambda_n (e_{i_1},\cdots , e_{i_{n}} ) = \pm C^j_{i_1\cdots i_n}(x)e_j.
    \end{equation*}

\begin{theorem}
\label{thm:main2}
For any  $\L$-algebroid $\Cc{\aV}$,
its corresponding linear $(-1)$-shifted   derived  Poisson manifold 
$\Cc{\aV}^\vee[-1]$  admits a canonical quantization.
\end{theorem}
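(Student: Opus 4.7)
The plan is to exhibit the explicit operator
$$\Delta := \cF \circ \Cc{L}_{\hbar D} \circ \cF^{-1} \in \hbar\DO^+\big(\Ba{\Cc{\aV}^\vee[-1]}\big)$$
and verify the three conditions of Definition \ref{Section 4 Defn Quantisation of Poisson structure}: (i) $\Delta \in \hbar\DO^+_0$ and self-adjoint, (ii) $\Delta^2 = 0$, and (iii) $\sigma_\hbar(\Delta)\big|_{\hbar=1} = Q + \Pi$. The Chevalley--Eilenberg differential $D$ on the dg manifold $(\Cc{\aV}[1], D)$ is obtained in the standard way from the $\L$-algebroid structure: in the local fiber coordinates $\eta^i$ dual to $\xi_i$, one has
\begin{equation*}
D = \sum_{n\geq 1}\left(\tfrac{1}{(n-1)!}\rho^a_{i_1\cdots i_{n-1}}(x)\, \eta^{i_1}\cdots \eta^{i_{n-1}}\tfrac{\partial}{\partial x^a} \;\pm\; \tfrac{1}{n!}C^j_{i_1\cdots i_n}(x)\,\eta^{i_1}\cdots \eta^{i_n}\tfrac{\partial}{\partial \eta^j}\right),
\end{equation*}
and $D^2=0$ is a repackaging of the higher Jacobi and multi-Leibniz identities. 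I would then recall the fiberwise $\hbar$-Fourier transform $\cF$ of Voronov--Zorich, whose essential property is that it intertwines multiplication and differentiation on the dual fibers according to $\eta^i \mapsto \pm \hbar\, \partial/\partial\xi_i$ and $\partial/\partial\eta^i \mapsto \mp \hbar^{-1}\xi_i$, with signs dictated by the $\ZZ$-graded parity.

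Squaring is immediate: since $\cF$ is a linear isomorphism,
$$\Delta^2 = \cF\circ (\Cc{L}_{\hbar D})^2 \circ \cF^{-1} = \tfrac{1}{2}\hbar^2\, \cF\circ \Cc{L}_{[D,D]}\circ \cF^{-1} = 0.$$
For self-adjointness I would invoke Proposition \ref{Section 2 Prop Lie derivative are first order asa}, which guarantees that $\Cc{L}_D$ is anti-self-adjoint of degree $(+1)$; coupled with the convention $\hbar^+ = -\hbar$, the operator $\Cc{L}_{\hbar D}$ is self-adjoint in the sense of Definition \ref{Section 2 Defn Adjoint operator}. The fiberwise Fourier transform intertwines the canonical pairings on $\Ba{\Cc{\aV}[1]}$ and $\Ba{\Cc{\aV}^\vee[-1]}$, so that $\cF^{-1}$ coincides with the adjoint $\cF^+$ up to a universal graded sign; conjugation therefore preserves self-adjointness, and $\Delta\in \hbar\DO^+$.

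The main technical step is computing the extended principal symbol. Decomposing $D$ into its fiber-weight homogeneous components on $\Cc{\aV}[1]$, each monomial of the form $\eta^{i_1}\cdots \eta^{i_{n-1}}\partial/\partial x^a$ (respectively $\eta^{i_1}\cdots\eta^{i_n}\partial/\partial\eta^j$) is transported by $\cF$ to an operator in $\hbar\DO$ whose leading term is precisely $\hbar^n \partial/\partial\xi_{i_1}\cdots\partial/\partial\xi_{i_{n-1}}\partial/\partial x^a$ (respectively $\hbar^n \xi_j\,\partial/\partial\xi_{i_1}\cdots\partial/\partial\xi_{i_n}$), up to combinatorial factors, because each substitution $\eta^i\mapsto\hbar\partial/\partial\xi_i$ contributes one factor of $\hbar$ and one order of differentiation, while the substitution $\partial/\partial\eta^j\mapsto -\hbar^{-1}\xi_j$ contributes multiplication by $\xi_j$ together with an $\hbar^{-1}$ that cancels the prefactor $\hbar$ in $\Cc{L}_{\hbar D}$. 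Setting $\hbar = 1$ in the resulting extended principal symbol reproduces term-by-term the formula \eqref{eq:pi} for $Q + \Pi$. The hard part is precisely this local bookkeeping of signs, combinatorial coefficients, and $\hbar$-weights arising from the interaction between the graded Fourier kernel and the supergeometric half-density convention; once carried out, all three conditions hold, giving the canonical quantization.
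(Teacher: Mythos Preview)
Your proposal is correct and follows essentially the same route as the paper: define $\Delta = \cF \circ \Cc{L}_{\hbar D} \circ \cF^{-1}$ via the Chevalley--Eilenberg homological vector field $D$ on $\Cc{\aV}[1]$ and the fiberwise $\hbar$-Fourier transform, then verify self-adjointness (Fourier preserves the scalar product), $\Delta^2=0$ (from $D^2=0$), and the symbol condition by a local-coordinate check comparing \eqref{eq:pi} with \eqref{eq:D1}. Your write-up is in fact somewhat more explicit than the paper's on the symbol computation, where the paper simply says ``it is simple to check, by using local coordinates.''
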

\begin{proof}
To any  $\L$-algebroid $\Cc{\aV}$, there  is 
a Chevalley--Eilenberg differential $D$,
which is a homological vector field on $\Cc{\aV}[1]$.
Indeed, $\L$-algebroid structures on a graded vector bundle
$\Cc{\aV}\to \cM$  are equivalent to homological vector fields
 on $\Cc{\aV}[1]$  vanishing on the zero section \cite{MR4091493}.
The homological vector field $D$ on $ \Cc{\aV}[1]$ is necessarily of
 the form $D=\sum_{n=1}^\infty D_n$,
where $D_n$ is a derivation 
on $C^\infty( \Cc{\aV}[1])$ of weight $(n-1)$:
\begin{equation}
\label{eq:D}
 D_n: \sections{\hat{S}^{\bullet}({\Cc{\aV}^\vee[-1]} )}
\to\sections{\hat{S}^{\bullet+n-1}( {\Cc{\aV}^\vee[-1]})} .
\end{equation}


Under the local  coordinate chart \eqref{eq:localchart2},
 the derivation $D_n$ can be written as
 \begin{equation}
\label{eq:D1}
        D_n = \pm \frac{1}{(n-1)!}\rho^a_{i_1\cdots i_{n-1}}(x)
\eta^{i_1} \cdots \eta^{i_{n-1}} \frac{\p}{\p x^a}
 \pm \frac{1}{n!}C^j_{i_1\cdots i_n}(x) \eta^{i_1}  \cdots \eta^{i_{n}}
\frac{\p}{\p\eta^{j}}.
    \end{equation}

According to Khudaverdian-Voronov \cite{arxiv.1808.10049}, the homological  vector field $D \in \XX (\Cc{\aV}[1])$
and the degree $(+1)$ Poisson tensor $ \Pi\in \sections{ \hat{S}(T_{\Cc{\aV}^\vee[-1]})}$ are related as follows
\begin{equation}
\label{eq:DPi}
\Phi^* D=\Pi,
\end{equation}
where 
\begin{equation}
\label{eq:phi}
\Phi: T^\vee_{{\Cc{\aV}^\vee[-1]}} \xto{\simeq}
 T^\vee_{\Cc{\aV}[1]}
\end{equation}
 is the canonical isomorphism  \cite{MR1262213},  $D \in \XX (\Cc{\aV}[1])$
is identified with its corresponding
 linear function on $T^\vee_{\Cc{\aV}[1]}$,
while $\Pi$ is considered as a  formal polynomial
 on $T^\vee_{{\Cc{\aV}^\vee[-1]}}$.
Following  Shemyakova \cite[Equation (48)]{MR4235776} (see also \cite{MR3894641}),
we denote by
\begin{equation}
\label{eq:Fourier}
\cF: \sections{\Ba{\Cc{\aV}[1]}}\xto{\simeq} \sections{ \Ba{\Cc{\aV}^\vee[-1]}}
\end{equation}
the fiberwise $\hbar$-Fourier transform \footnote{Fiberwise Fourier transform
was  discovered by Voronov-Zorich~\cite{MR947602} in
their study  of supermanifold integration theory. It can
be considered as the quantum counterpart of the isomorphism $\Phi$
 \eqref{eq:phi}. }:
\begin{equation}\label{eq:Fourier1}
    \cF [f(x, \eta)\Cc{D} x^{\,\half} \Cc{D} \eta^{\,\half}] =
 \left(\int_{\Cc{\aV}_x}e^{\ih <\xi, \eta>} f(x,\eta )\,\Cc{D} \eta\right)
\Cc{D}x^{\half} \Cc{D}\xi^{\half}\,.
\end{equation}
where  $(x, \eta)\in  \Cc{\aV}[1]$ and $(x, \xi)\in \Cc{\aV}^\vee[-1]$
(we emphasize the base point $x\in \cM$).
Now set
\begin{equation}
\label{eq:BVinfinity}
\Delta:= \cF \circ \Cc{L}_{\hbar D} \circ \cF^{-1} .
\end{equation}
Since the fiberwise $\hbar$-Fourier transform $\cF $ preserves the  scalar
products and $\Cc{L}_{\hbar D}$ is   self-adjoint, it follows that
$\Delta$ is self-adjoint. Moreover, since $D$ is
a homological vector field, it follows that $\Delta^2=0$.
Finally, it is simple to check, by using local coordinates,
and Eqs. \eqref{eq:pi} and \eqref{eq:D1},  that
$\Delta$ is indeed a quantization of the  linear
$(-1)$-shifted derived Poisson manifold  $\Cc{\aV}^\vee[-1]$.
\end{proof}

\subsection{Quantization of  the
derived intersection of coisotropic submanifolds 
                  of a Lie-Poisson manifold}

Next, as a special case,
we consider the  case when $\Cc{\aV}$ is an ordinary
Lie algebroid $A$ over the base manifold $M$.
 In this situation, $A^\vee [-1]$ is a   $(-1)$-shifted  Poisson
 manifold,
where the degree $(+1)$-Poisson bracket on
$C^\infty (A^\vee [-1])\cong \sections{\Lambda^{-\bullet} A}$ is the
Schouten bracket. 
For the simplicity of notation, assume that the
rank of the vector bundle $A$ is $n$.
By $D: \sections{\Lambda^\bullet A^\vee}\to \sections{\Lambda^{\bullet+1} A^\vee}$,
we denote the  Chevalley--Eilenberg differential of the Lie algebroid.
Thus $(A[1], D)$ is a dg manifold.
Assume, for the sake of simplicity, that
%
$M$ is an orientable manifold and the  vector bundle $A\to M$ is orientable
 as well. Then both  vector bundles $A\oplus T_M$
and $A\oplus T^\vee_M$ are orientable.
We have the following  standard identifications \cite{MR2275685, MR4312284}. 

\begin{equation}
\label{eq:DEN1}
\sections{\Ba{A[1]}}   \cong \sections{  \Lambda A^\vee \otimes \big( \Lambda^{\text{top}} A \otimes
\Lambda^{\text{top}}T^\vee_M\big)^\half }
\end{equation}
while
\begin{equation}
\sections{\Ba{A^\vee[-1]}}   \cong 
 \sections{ \Lambda A \otimes \big( \Lambda^{\text{top}} A^\vee \otimes
\Lambda^{\text{top}}T^\vee_M\big)^\half }.
\label{eq:DEN2}
\end{equation}
Here, by abuse of notation,
$\Lambda^{\text{top}} A \otimes
\Lambda^{\text{top}}T^\vee_M$ and 
 $\Lambda^{\text{top}} A^\vee \otimes
\Lambda^{\text{top}}T^\vee_M$ are identified
with $| \Lambda^{\text{top}} A \otimes
\Lambda^{\text{top}}T^\vee_M|$ and $| \Lambda^{\text{top}} A^\vee \otimes
\Lambda^{\text{top}}T^\vee_M|$, respectively,  and therefore 
their square root make sense.
On the other hand, it is standard \cite[Section 5]{AX03} \cite[Section 6.3]{MR4312284} that there is a canonical isomorphism
\begin{equation}
\Phi:\sections{ \Lambda^k A^\vee \otimes \big( \Lambda^{\text{top}} A \otimes
\Lambda^{\text{top}}T^\vee_M\big)^\half }
\xto{\simeq}
\sections{ \Lambda^{ \text{top}-k} A \otimes \big( \Lambda^{\text{top}} A^\vee \otimes
\Lambda^{\text{top}}T^\vee_M\big)^\half }.
\end{equation}
It is simple to check that, under the identifications \eqref{eq:DEN1}-\eqref{eq:DEN2},
the fiberwise $\hbar$-Fourier transform
\eqref{eq:Fourier} and $\Phi$ are related as follows
\begin{equation}
\cF|_{\tiny {\sections{\Lambda^k A^\vee \otimes \big( \Lambda^{\text{top}} A \otimes
\Lambda^{\text{top}}T^\vee_M\big)^\half }}}
=(\ih)^{n-k}\Phi,
\end{equation}
where $n$ is the rank of $A$.
Thus,  it follows that, under the identifications \eqref{eq:DEN1}-\eqref{eq:DEN2},
$$\cF \circ \Cc{L}_{\hbar D} \circ \cF^{-1}=
\hbar^2 \Phi \circ \Cc{L}_{ D} \circ \Phi^{-1} .$$
It is  known \cite{MR4312284, MR1906481} that, under the identification \eqref{eq:DEN1},
 as a degree $(+1)$ operator on $\sections{\Ba{A[1]}}$,
$\Cc{L}_{ D}$
coincides with the
 Chevalley--Eilenberg differential of the Lie algebroid $A$
with values in the Evens-Lu-Weinstein module  \cite{MR1726784}
$\big( \Lambda^{\text{top}} A \otimes \Lambda^{\text{top}}T^\vee_M\big)^\half $:
$$d_{\text{CE}}^{\text{ELW}}:
\sections{  \Lambda^\bullet  A^\vee \otimes \big( \Lambda^{\text{top}} A \otimes
\Lambda^{\text{top}}T^\vee_M\big)^\half }
\to \sections{  \Lambda^{\bullet+1} A^\vee \otimes \big( \Lambda^{\text{top}} A \otimes
\Lambda^{\text{top}}T^\vee_M\big)^\half }.
$$
As a consequence,  we have the following

\begin{proposition}
\label{pro:SFO}
For any  Lie algebroid $A$ over  $M$, assuming
that $M$ is an orientable manifold and the  vector bundle $A\to M$ is
 orientable as well, the $(-1)$-shifted  Poisson manifold
$A^\vee [-1]$ admits a canonical quantization
\begin{equation}
\Delta=\hbar^2 \Phi \circ d_{\text{CE}}^{\text{ELW}} \circ \Phi^{-1}
: \sections{\Lambda^{-\bullet } A \otimes \big( \Lambda^{\text{top}} A^\vee \otimes
\Lambda^{\text{top}}T^\vee_M\big)^\half }
\to \sections{ \Lambda^{ -(\bullet+1)} A \otimes \big( \Lambda^{\text{top}} A^\vee \otimes
\Lambda^{\text{top}}T^\vee_M\big)^\half }.
\end{equation} 
\end{proposition}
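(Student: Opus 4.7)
The plan is to simply specialize Theorem \ref{thm:main2} to the case where the $\L$-algebroid $\Cc{\aV}$ is an ordinary Lie algebroid $A$ (i.e.\ only the anchor $\rho_1$ and the binary bracket $\lambda_2$ are nontrivial), and then rewrite the resulting operator using the natural geometric identifications of Berezinian half-densities on $A[1]$ and on $A^\vee[-1]$ with sections of exterior bundles twisted by the Evens--Lu--Weinstein line bundle.

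First I would invoke Theorem \ref{thm:main2}: the canonical quantization of the linear $(-1)$-shifted Poisson manifold $A^\vee[-1]$ is the $\BV_\infty$-operator $\Delta = \cF\circ\Cc{L}_{\hbar D}\circ\cF^{-1}$, where $D$ is the homological vector field on $A[1]$ corresponding to the Chevalley--Eilenberg differential and $\cF$ is the fiberwise $\hbar$-Fourier transform \eqref{eq:Fourier1}. Next I would apply the identifications \eqref{eq:DEN1}--\eqref{eq:DEN2} so that both sides of $\cF$ become spaces of sections of exterior bundles of $A$ and $A^\vee$ twisted by $(\Lambda^{\mathrm{top}} A\otimes\Lambda^{\mathrm{top}}T_M^\vee)^{1/2}$ and $(\Lambda^{\mathrm{top}} A^\vee\otimes\Lambda^{\mathrm{top}}T_M^\vee)^{1/2}$ respectively. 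On a local chart $(x^a,\eta^i)$ and its Fourier-dual $(x^a,\xi_i)$, a straightforward calculation using the defining integral \eqref{eq:Fourier1} then shows that on the weight-$k$ component
\begin{equation*}
\cF\big|_{\sections{\Lambda^k A^\vee \otimes (\Lambda^{\text{top}} A \otimes \Lambda^{\text{top}}T^\vee_M)^\half}} = \hbar^{k-n}\,\Phi,
\end{equation*}
where $n=\rank A$ and $\Phi$ is the canonical Poincar\'e-type isomorphism recalled in the excerpt. Consequently $\cF^{-1}$ acts on $\sections{\Lambda^{n-k}A\otimes(\cdots)^{1/2}}$ by $\hbar^{n-k}\Phi^{-1}$.

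Second, I would invoke the identification from \cite{MR4312284, MR1906481} that, under \eqref{eq:DEN1}, the Lie derivative $\Cc{L}_D$ on $\sections{\Ba{A[1]}}$ coincides with the Chevalley--Eilenberg differential $d_{\mathrm{CE}}^{\mathrm{ELW}}$ of $A$ with coefficients in the Evens--Lu--Weinstein module $(\Lambda^{\mathrm{top}}A\otimes\Lambda^{\mathrm{top}}T_M^\vee)^{1/2}$. Putting these pieces together: for $\xi\in\sections{\Lambda^{n-k}A\otimes(\cdots)^{1/2}}$ one has $\cF^{-1}(\xi)=\hbar^{n-k}\Phi^{-1}(\xi)$, then $\Cc{L}_{\hbar D}$ multiplies by an additional factor of $\hbar$ and lands in the $(k{+}1)$-component, on which $\cF$ contributes $\hbar^{k+1-n}$; the powers of $\hbar$ add up to exactly $2$, giving
\begin{equation*}
\Delta = \cF\circ\Cc{L}_{\hbar D}\circ\cF^{-1} = \hbar^2\,\Phi\circ d_{\mathrm{CE}}^{\mathrm{ELW}}\circ\Phi^{-1},
\end{equation*}
as claimed.

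The only nontrivial step is the local verification that $\cF$ really equals $\hbar^{k-n}\Phi$ on each homogeneous component; once conventions for the half-density basis elements $\sqrt{\Cc{D}x\,\Cc{D}\eta}$ and $\sqrt{\Cc{D}x\,\Cc{D}\xi}$ are fixed, this is a direct (Berezin) Gaussian integration in the fiber variables. All the remaining self-adjointness and $\Delta^2=0$ assertions are inherited automatically from Theorem \ref{thm:main2}, since $\Phi$ is an isomorphism of the relevant sections and $d_{\mathrm{CE}}^{\mathrm{ELW}}\circ d_{\mathrm{CE}}^{\mathrm{ELW}}=0$.
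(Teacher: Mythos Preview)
Your proposal is correct and follows essentially the same argument as the paper: specialize the $\BV_\infty$-operator $\Delta=\cF\circ\Cc{L}_{\hbar D}\circ\cF^{-1}$ from Theorem~\ref{thm:main2} to an ordinary Lie algebroid, use the identifications \eqref{eq:DEN1}--\eqref{eq:DEN2} together with the relation $\cF|_{\Lambda^k A^\vee}=\hbar^{k-n}\Phi$, and invoke the known identification of $\Cc{L}_D$ with $d_{\text{CE}}^{\text{ELW}}$. The $\hbar$-bookkeeping you carry out is exactly the computation the paper performs (the paper writes the scaling as $(1/\hbar)^{n-k}$, which is your $\hbar^{k-n}$).
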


\begin{remark}
Proposition \ref{pro:SFO}, in different forms, have
been known in the literature. When the Lie algebroid
$A$ is  the cotangent Lie algebroid of a Poisson manifold,
this is essentially the Koszul-Brylinski operator
\cite{MR837203, MR950556}. When choosing 
a flat $A$-connection on $\Lambda^{\text{top}} A$, $\Delta$
can be identified with an BV-operator on
$\sections{\Lambda^{-\bullet }A } $ as shown in \cite{MR1675117}.
See also \cite[Section 9.4]{MR4300181} for the   approach
from the viewpoint of factorization  algebras.
\end{remark} 

Finally, we consider the $(-1)$-shifted derived Poisson manifold 
 $(A^\vee [-1], \iota_s, \Pi_2)$ as in Example \ref{example-three},
which arises as the derived  intersection of two coisotropic submanifolds
of the Lie-Poisson manifold $A^\vee$:
 the graph of $s$ and the zero section.

Let $Q= \iota_s$. It is simple to see that, 
under the identification \eqref{eq:DEN2}, the Lie derivative
$$\Cc{L}_Q: \sections{\Ba{A^\vee[-1]}} \to \sections{\Ba{A^\vee[-1]}}$$
becomes  the contraction operator
$$ \iota_s:  \sections{ \Lambda A \otimes \big( \Lambda^{\text{top}} A^\vee
 \otimes \Lambda^{\text{top}}T^\vee_M\big)^\half }
\to
\sections{ \Lambda A \otimes \big( \Lambda^{\text{top}} A^\vee \otimes
\Lambda^{\text{top}}T^\vee_M\big)^\half }
$$
Thus it follows that
 $\Phi^{-1} \circ \iota_s \circ \Phi$,
 the  conjugation by $\Phi$,
 coincides with
the multiplication operator:
$$m_s:  
\sections{  \Lambda^\bullet  A^\vee \otimes \big( \Lambda^{\text{top}} A \otimes
\Lambda^{\text{top}}T^\vee_M\big)^\half }
\to \sections{  \Lambda^{\bullet+1} A^\vee \otimes \big( \Lambda^{\text{top}} A \otimes
\Lambda^{\text{top}}T^\vee_M\big)^\half } .
$$
Since $d_{\text{CE}}s=0$, it follows that 
$$[ d_{\text{CE}}^{\text{ELW}}, \ m_s]=0 .$$
Therefore
$$[ \Phi \circ d_{\text{CE}}^{\text{ELW}} \circ \Phi^{-1} , \ \iota_s]=0 .$$
Hence $\hbar \iota_s+ \hbar^2 \Phi \circ d_{\text{CE}}^{\text{ELW}} \circ \Phi^{-1}$ is  indeed a  $\BV_\infty$-operator quantizing  the
$(-1)$-shifted derived Poisson manifold
 $(A^\vee [-1], \iota_s, \Pi_2)$.

\begin{theorem}
\label{thm:main3}
Let $A$ be a Lie algebroid over $M$, and $s\in \Gamma (A^\vee)$
a Lie algebroid $1$-cocycle. 
 Assume that
 $M$ is an orientable manifold and the  vector bundle $A\to M$ is
 orientable as well.
 Then the $(-1)$-shifted derived Poisson manifold
$(A^\vee [-1], \iota_s, \Pi_2)$ as in Example \ref{example-three}
admits a  canonical quantization
\begin{equation}
\Delta=\hbar \iota_s+ \hbar^2 \Phi \circ d_{\text{CE}}^{\text{ELW}} \circ \Phi^{-1}
: \sections{\Lambda^{-\bullet } A \otimes \big( \Lambda^{\text{top}} A^\vee \otimes
\Lambda^{\text{top}}T^\vee_M\big)^\half }
\to \sections{ \Lambda^{ -(\bullet+1)} A \otimes \big( \Lambda^{\text{top}} A^\vee \otimes
\Lambda^{\text{top}}T^\vee_M\big)^\half }.
\end{equation}
\end{theorem}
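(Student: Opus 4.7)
The plan is to assemble $\Delta$ from two compatible pieces: the quantization $\Delta_2:=\hbar^2\Phi\circ d_{\text{CE}}^{\text{ELW}}\circ \Phi^{-1}$ of the Schouten bracket $\Pi_2$ already produced by Theorem~\ref{thm:main2}/Proposition~\ref{pro:SFO}, together with the first-order term $\hbar\iota_s$ arising from the homological vector field $Q=\iota_s$ on $A^\vee[-1]$. The cocycle hypothesis $d_{\text{CE}}s=0$ is precisely what forces the graded commutator of these two pieces to vanish, so that their sum squares to zero.

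First I would specialize Theorem~\ref{thm:main2} (treating $A$ as an $\L$-algebroid concentrated in degree $0$) and invoke Proposition~\ref{pro:SFO} to record that $\Delta_2$ is a self-adjoint element of $\hbar\DO^+(\Ba{A^\vee[-1]})$ whose extended principal symbol evaluated at $\hbar=1$ equals $\Pi_2$ and which satisfies $\Delta_2^2=0$. For the first-order term, I would unpack the isomorphism $\Phi$ from \cite[Section 5]{AX03} and \cite[Section 6.3]{MR4312284} to verify the intertwining identity $\Phi^{-1}\circ \iota_s\circ \Phi = m_s$, where $m_s$ denotes exterior multiplication by $s$ on $\sections{\Lambda^\bullet A^\vee \otimes \big( \Lambda^{\text{top}} A \otimes \Lambda^{\text{top}}T^\vee_M\big)^{1/2}}$. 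Since $d_{\text{CE}}^{\text{ELW}}$ is a derivation of the $\sections{\Lambda^\bullet A^\vee}$-module action and $d_{\text{CE}}s=0$, we get
\[
[d_{\text{CE}}^{\text{ELW}},m_s]=m_{d_{\text{CE}}s}=0,
\]
so conjugating back yields $[\Phi\circ d_{\text{CE}}^{\text{ELW}}\circ \Phi^{-1},\iota_s]=0$ as graded endomorphisms.

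With $\Delta=\hbar\iota_s+\Delta_2$, I would then verify the three defining conditions of Definition~\ref{Section 4 Defn Quantisation of Poisson structure}. Self-adjointness holds because $\iota_s$ is a degree $(+1)$ vector field on $A^\vee[-1]$, hence anti-self-adjoint as a first-order operator by Proposition~\ref{Section 2 Prop Lie derivative are first order asa}, so $\hbar\iota_s\in \hbar\DO^+$; and $\Delta_2\in\hbar\DO^+$ is already established. Squaring gives
\[
\Delta^2=\hbar^2\iota_s^2+\hbar\,[\hbar\iota_s,\Delta_2]+\Delta_2^2,
\]
in which $\iota_s^2=0$ since $\iota_s$ is an odd homological vector field, the middle term vanishes by the commutation shown above, and $\Delta_2^2=0$. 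Finally, the extended principal symbol at $\hbar=1$ reads $\iota_s+\Pi_2$, matching the Poisson tensor $Q+\Pi$ of $(A^\vee[-1],\iota_s,\Pi_2)$.

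The main obstacle I expect is the bookkeeping identity $\Phi^{-1}\circ \iota_s\circ \Phi = m_s$: morally it is a version of Poincar\'e duality between contraction on $A$-polyvectors and exterior multiplication on $A^\vee$-forms, twisted by the Evens--Lu--Weinstein line, but to use it I must check it explicitly against the formula for $\Phi$ and verify compatibility with the identifications \eqref{eq:DEN1}--\eqref{eq:DEN2} under which $\iota_s$ acts on $\sections{\Ba{A^\vee[-1]}}$. Once this intertwining is in hand, the remainder of the argument is a direct consequence of the constructions already assembled in Theorem~\ref{thm:main2} and Proposition~\ref{pro:SFO}.
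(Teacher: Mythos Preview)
Your proposal is correct and follows essentially the same route as the paper: invoke Proposition~\ref{pro:SFO} for the $\hbar^2$-term, establish the intertwining $\Phi^{-1}\circ\iota_s\circ\Phi=m_s$, use $d_{\text{CE}}s=0$ to get $[d_{\text{CE}}^{\text{ELW}},m_s]=0$, conjugate back, and conclude. The paper's argument (given in the paragraphs immediately preceding the theorem statement) is slightly terser but identical in substance; your explicit verification of self-adjointness via Proposition~\ref{Section 2 Prop Lie derivative are first order asa} and of the square-zero condition is a welcome elaboration of what the paper leaves implicit.
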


In particular, when $A$ is $T_M$ and $s=df\in \Omega^1 (M)$ an exact
one-form, where $f\in C^\infty (M)$, then
$\big( \Lambda^{\text{top}} A \otimes \Lambda^{\text{top}}T^\vee_M\big)^\half $ is canonically isomorphic to the trivial line bundle $M\times \RR$
and the operator
$d_{\text{CE}}^{\text{ELW}}$ reduces to the ordinary de Rham differential.
As an immediate consequence, we see that
\begin{equation}
\hbar \iota_{df}+ \hbar^2 \Phi \circ d_{DR} \circ \Phi^{-1}
: \sections{\Lambda^{-\bullet } T_M \otimes \big( \Lambda^{\text{top}} 
T^\vee_M\big) }
\to 
\sections{\Lambda^{-(\bullet+1) } T_M \otimes \big( \Lambda^{\text{top}} 
T^\vee_M\big) }
\end{equation}
is  a 
$\BV_\infty$-operator quantizing  the
$(-1)$-shifted derived symplectic manifold
 $(T_M^\vee [-1], \iota_{df}, \omega_{\text{can}})$.
Here
$$\Phi: \Omega^k (M)\to \sections{
\Lambda^{ \text{top}-k} T_M \otimes \big( \Lambda^{\text{top}}T^\vee_M\big)
}, \quad \quad \forall k=0, 1, \cdots$$
 is the canonical isomorphism.

\section*{Acknowledgments}

We would like to thank Ruggero Bandiera, Alberto  Cattaneo,
Owen Gwilliam,
Camille Laurent-Gengoux,
Jon Pridham,
Ekaterina Shemyakova, Mathieu Sti\'enon and   Theodore Voronov
 for fruitful discussions and useful comments.
 We are grateful to the anonymous referees for many insightful comments
and suggestions which led to significant improvements in exposition.

Ping Xu is grateful to the  Fields Institute
for hosting the workshop on 
``Supergeometry and Bracket Structures
in Mathematics and Physics" in March 2022, where he  benefited
a lot from  the discussions with the participants.

\section*{Statements and Declarations}

{\bf Competing interests:}
This work was partially supported by 
the NSF grant  DMS-2001599.

{\bf Data availability:}
 Data sharing not applicable to this work since
 no datasets were generated or analyzed in the paper.
\bibliography{references}

\def\cprime{$'$}
\providecommand{\bysame}{\leavevmode\hbox to3em{\hrulefill}\thinspace}
\providecommand{\MR}{\relax\ifhmode\unskip\space\fi MR }
\providecommand{\MRhref}[2]{%
  \href{http://www.ams.org/mathscinet-getitem?mr=#1}{#2}
}
\providecommand{\href}[2]{#2}
\begin{thebibliography}{10}

\bibitem{AX03}
Anton Alekseev and Ping Xu, \emph{Derived brackets and courant algebroids},
  Unpublished, available at \url{http://www.math.psu.edu/ping/anton-final.pdf},
  2001.

\bibitem{https://doi.org/10.48550/arxiv.2012.14812}
Ruggero Bandiera, \emph{Cumulants, koszul brackets and homological perturbation
  theory for commutative $bv_\infty$ and $ibl_\infty$ algebras},
  arXiv.2012.14812 (2020).

\bibitem{MR4091493}
Ruggero Bandiera, Zhuo Chen, Mathieu Stiénon, and Ping Xu, \emph{Shifted
  derived {P}oisson manifolds associated with {L}ie pairs}, Comm. Math. Phys.
  \textbf{375} (2020), no.~3, 1717--1760. \MR{4091493}

\bibitem{MR3654355}
Denis Bashkirov and Alexander~A. Voronov, \emph{The {BV} formalism for
  {$L_\infty$}-algebras}, J. Homotopy Relat. Struct. \textbf{12} (2017), no.~2,
  305--327. \MR{3654355}

\bibitem{MR616572}
I.~A. Batalin and G.~A. Vilkovisky, \emph{Gauge algebra and quantization},
  Phys. Lett. B \textbf{102} (1981), no.~1, 27--31. \MR{616572}

\bibitem{MR726170}
\bysame, \emph{Quantization of gauge theories with linearly dependent
  generators}, Phys. Rev. D (3) \textbf{28} (1983), no.~10, 2567--2582.
  \MR{726170}

\bibitem{MR750272}
\bysame, \emph{Erratum: ``{Q}uantization of gauge theories with linearly
  dependent generators''}, Phys. Rev. D (3) \textbf{30} (1984), no.~2, 508.
  \MR{750272}

\bibitem{MR2412311}
K.~Bering, \emph{Semidensities, second-class constraints, and conversion in
  anti-{P}oisson geometry}, J. Math. Phys. \textbf{49} (2008), no.~4, 043516,
  31. \MR{2412311}

\bibitem{MR2273508}
Nicole Berline, Ezra Getzler, and Michèle Vergne, \emph{Heat kernels and
  {D}irac operators}, Grundlehren Text Editions, Springer-Verlag, Berlin, 2004,
  Corrected reprint of the 1992 original. \MR{2273508}

\bibitem{MR3235797}
Christopher Braun and Andrey Lazarev, \emph{Homotopy {BV} algebras in {P}oisson
  geometry}, Trans. Moscow Math. Soc. (2013), 217--227. \MR{3235797}

\bibitem{MR3370863}
Paul Bressler, Alexander Gorokhovsky, Ryszard Nest, and Boris Tsygan,
  \emph{Deligne groupoid revisited}, Theory Appl. Categ. \textbf{30} (2015),
  Paper No. 29, 1001--1017. \MR{3370863}

\bibitem{MR2840338}
Andrew~James Bruce, \emph{From {$L_\infty$}-algebroids to higher
  {S}chouten/{P}oisson structures}, Rep. Math. Phys. \textbf{67} (2011), no.~2,
  157--177. \MR{2840338}

\bibitem{MR950556}
Jean-Luc Brylinski, \emph{A differential complex for {P}oisson manifolds}, J.
  Differential Geom. \textbf{28} (1988), no.~1, 93--114. \MR{950556}

\bibitem{MR2275685}
Alberto~S. Cattaneo, \emph{From topological field theory to deformation
  quantization and reduction}, International {C}ongress of {M}athematicians.
  {V}ol. {III}, Eur. Math. Soc., Zürich, 2006, pp.~339--365. \MR{2275685}

\bibitem{MR2304327}
Alberto~S. Cattaneo and Giovanni Felder, \emph{Relative formality theorem and
  quantisation of coisotropic submanifolds}, Adv. Math. \textbf{208} (2007),
  no.~2, 521--548. \MR{2304327}

\bibitem{MR4300181}
Kevin Costello and Owen Gwilliam, \emph{Factorization algebras in quantum field
  theory. {V}ol. 2}, New Mathematical Monographs, vol.~41, Cambridge University
  Press, Cambridge, 2021. \MR{4300181}

\bibitem{MR1701597}
Pierre Deligne and John~W. Morgan, \emph{Notes on supersymmetry (following
  {J}oseph {B}ernstein)}, Quantum fields and strings: a course for
  mathematicians, {V}ol. 1, 2 ({P}rinceton, {NJ}, 1996/1997), Amer. Math. Soc.,
  Providence, RI, 1999, pp.~41--97. \MR{1701597}

\bibitem{MR2178041}
Jean-Paul Dufour and Nguyen~Tien Zung, \emph{Poisson structures and their
  normal forms}, Progress in Mathematics, vol. 242, Birkh\"{a}user Verlag,
  Basel, 2005. \MR{2178041}

\bibitem{MR1726784}
Sam Evens, Jiang-Hua Lu, and Alan Weinstein, \emph{Transverse measures, the
  modular class and a cohomology pairing for {L}ie algebroids}, Quart. J. Math.
  Oxford Ser. (2) \textbf{50} (1999), no.~200, 417--436. \MR{1726784}

\bibitem{MR1256989}
E.~Getzler, \emph{Batalin-{V}ilkovisky algebras and two-dimensional topological
  field theories}, Comm. Math. Phys. \textbf{159} (1994), no.~2, 265--285.
  \MR{1256989}

\bibitem{MR4312284}
Melchior Gr\"{u}tzmann, Jean-Philippe Michel, and Ping Xu, \emph{Weyl
  quantization of degree 2 symplectic graded manifolds}, J. Math. Pures Appl.
  (9) \textbf{154} (2021), 67--107. \MR{4312284}

\bibitem{MR3782422}
Owen Gwilliam and Rune Haugseng, \emph{Linear {B}atalin-{V}ilkovisky
  quantization as a functor of {$\infty$}-categories}, Selecta Math. (N.S.)
  \textbf{24} (2018), no.~2, 1247--1313. \MR{3782422}

\bibitem{MR3584886}
Johannes Huebschmann, \emph{Multi derivation {M}aurer-{C}artan algebras and sh
  {L}ie-{R}inehart algebras}, J. Algebra \textbf{472} (2017), 437--479.
  \MR{3584886}

\bibitem{MR2063265}
Hovhannes~M. Khudaverdian, \emph{Semidensities on odd symplectic
  supermanifolds}, Comm. Math. Phys. \textbf{247} (2004), no.~2, 353--390.
  \MR{2063265}

\bibitem{MR3685170}
Hovhannes~M. Khudaverdian and Matthew~T. Peddie, \emph{Odd {L}aplacians:
  geometrical meaning of potential and modular class}, Lett. Math. Phys.
  \textbf{107} (2017), no.~7, 1195--1214. \MR{3685170}

\bibitem{MR1952122}
Hovhannes~M. Khudaverdian and Theodore~T. Voronov, \emph{On odd {L}aplace
  operators}, Lett. Math. Phys. \textbf{62} (2002), no.~2, 127--142.
  \MR{1952122}

\bibitem{MR2070054}
\bysame, \emph{On odd {L}aplace operators. {II}}, Geometry, topology, and
  mathematical physics, Amer. Math. Soc. Transl. Ser. 2, vol. 212, Amer. Math.
  Soc., Providence, RI, 2004, pp.~179--205. \MR{2070054}

\bibitem{MR2757715}
\bysame, \emph{Higher {P}oisson brackets and differential forms}, Geometric
  methods in physics, AIP Conf. Proc., vol. 1079, Amer. Inst. Phys., Melville,
  NY, 2008, pp.~203--215. \MR{2757715}

\bibitem{MR3307151}
\bysame, \emph{Geometric constructions on the algebra of densities}, Topology,
  geometry, integrable systems, and mathematical physics, Amer. Math. Soc.
  Transl. Ser. 2, vol. 234, Amer. Math. Soc., Providence, RI, 2014,
  pp.~241--263. \MR{3307151}

\bibitem{MR4035066}
\bysame, \emph{Thick morphisms of supermanifolds, quantum mechanics, and spinor
  representation}, J. Geom. Phys. \textbf{148} (2020), 103540, 14. \MR{4035066}

\bibitem{arxiv.1808.10049}
\bysame, \emph{Thick morphisms, higher koszul brackets, and
  $l_{\infty}$-algebroids}, arXiv.1808.10049, (2018).

\bibitem{MR2695305}
Lars~J. Kjeseth, \emph{B{RST} cohomology and homotopy {L}ie-{R}inehart pairs},
  ProQuest LLC, Ann Arbor, MI, 1996, Thesis (Ph.D.)--The University of North
  Carolina at Chapel Hill. \MR{2695305}

\bibitem{MR1854642}
\bysame, \emph{Homotopy {R}inehart cohomology of homotopy {L}ie-{R}inehart
  pairs}, Homology Homotopy Appl. \textbf{3} (2001), no.~1, 139--163.
  \MR{1854642}

\bibitem{MR1906481}
Yvette Kosmann-Schwarzbach and Juan Monterde, \emph{Divergence operators and
  odd {P}oisson brackets}, Ann. Inst. Fourier (Grenoble) \textbf{52} (2002),
  no.~2, 419--456. \MR{1906481}

\bibitem{MR837203}
Jean-Louis Koszul, \emph{Crochet de {S}chouten-{N}ijenhuis et cohomologie}, no.
  Num\'{e}ro Hors S\'{e}rie, 1985, The mathematical heritage of \'{E}lie Cartan
  (Lyon, 1984), pp.~257--271. \MR{837203}

\bibitem{MR1764440}
Olga Kravchenko, \emph{Deformations of {B}atalin-{V}ilkovisky algebras},
  Poisson geometry ({W}arsaw, 1998), Banach Center Publ., vol.~51, Polish Acad.
  Sci. Inst. Math., Warsaw, 2000, pp.~131--139. \MR{1764440}

\bibitem{MR1327129}
Tom Lada and Martin Markl, \emph{Strongly homotopy {L}ie algebras}, Comm.
  Algebra \textbf{23} (1995), no.~6, 2147--2161. \MR{1327129}

\bibitem{MR4164730}
Camille Laurent-Gengoux, Sylvain Lavau, and Thomas Strobl, \emph{The universal
  {L}ie {$\infty$}-algebroid of a singular foliation}, Doc. Math. \textbf{25}
  (2020), 1571--1652. \MR{4164730}

\bibitem{MR3910470}
Hsuan-Yi Liao and Mathieu Stiénon, \emph{Formal exponential map for graded
  manifolds}, Int. Math. Res. Not. IMRN (2019), no.~3, 700--730. \MR{3910470}

\bibitem{MR1262213}
Kirill C.~H. Mackenzie and Ping Xu, \emph{Lie bialgebroids and {P}oisson
  groupoids}, Duke Math. J. \textbf{73} (1994), no.~2, 415--452. \MR{1262213}

\bibitem{MR3654361}
Marco Manetti, \emph{Uniqueness and intrinsic properties of non-commutative
  {K}oszul brackets}, J. Homotopy Relat. Struct. \textbf{12} (2017), no.~2,
  487--509. \MR{3654361}

\bibitem{Manetti:book}
\bysame, \emph{{L}ie methods in deformation theory}, Springer Monographs in
  Mathematics, 2022.

\bibitem{MR3508617}
Marco Manetti and Giulia Ricciardi, \emph{Universal {L}ie formulas for higher
  antibrackets}, SIGMA Symmetry Integrability Geom. Methods Appl. \textbf{12}
  (2016), Paper No. 053, 20. \MR{3508617}

\bibitem{MR2709144}
Rajan~A. Mehta, \emph{Supergroupoids, double structures, and equivariant
  cohomology}, ProQuest LLC, Ann Arbor, MI, 2006, Thesis (Ph.D.)--University of
  California, Berkeley. \MR{2709144}

\bibitem{MR3319134}
Rajan~A. Mehta, Mathieu Stiénon, and Ping Xu, \emph{The {A}tiyah class of a
  dg-vector bundle}, C. R. Math. Acad. Sci. Paris \textbf{353} (2015), no.~4,
  357--362. \MR{3319134}

\bibitem{MR2180451}
Yong-Geun Oh and Jae-Suk Park, \emph{Deformations of coisotropic submanifolds
  and strong homotopy {L}ie algebroids}, Invent. Math. \textbf{161} (2005),
  no.~2, 287--360. \MR{2180451}

\bibitem{MR3653066}
Jonathan~P. Pridham, \emph{Shifted {P}oisson and symplectic structures on
  derived {$N$}-stacks}, J. Topol. \textbf{10} (2017), no.~1, 178--210.
  \MR{3653066}

\bibitem{MR4009180}
\bysame, \emph{Deformation quantisation for {$(-1)$}-shifted symplectic
  structures and vanishing cycles}, Algebr. Geom. \textbf{6} (2019), no.~6,
  747--779. \MR{4009180}

\bibitem{arXiv:1804.07622}
\bysame, \emph{An outline of shifted poisson structures and deformation
  quantisation in derived differential geometry}, arXiv:1804.07622, (2018).

\bibitem{MR751959}
Michael Reed and Barry Simon, \emph{Methods of modern mathematical physics.
  {I}}, second ed., Academic Press, Inc. [Harcourt Brace Jovanovich,
  Publishers], New York, 1980, Functional analysis. \MR{751959}

\bibitem{MR1230027}
Albert Schwarz, \emph{Geometry of {B}atalin-{V}ilkovisky quantization}, Comm.
  Math. Phys. \textbf{155} (1993), no.~2, 249--260. \MR{1230027}

\bibitem{MR4235776}
Ekaterina Shemyakova, \emph{On a {B}atalin-{V}ilkovisky operator generating
  higher {K}oszul brackets on differential forms}, Lett. Math. Phys.
  \textbf{111} (2021), no.~2, Paper No. 41, 30. \MR{4235776}

\bibitem{MR3631929}
Yunhe Sheng and Chenchang Zhu, \emph{Higher extensions of {L}ie algebroids},
  Commun. Contemp. Math. \textbf{19} (2017), no.~3, 1650034, 41. \MR{3631929}

\bibitem{MR1269545}
Izu Vaisman, \emph{Lectures on the geometry of {P}oisson manifolds}, Progress
  in Mathematics, vol. 118, Birkh\"{a}user Verlag, Basel, 1994. \MR{1269545}

\bibitem{MR3277952}
Luca Vitagliano, \emph{On the strong homotopy {L}ie-{R}inehart algebra of a
  foliation}, Commun. Contemp. Math. \textbf{16} (2014), no.~6, 1450007, 49.
  \MR{3277952}

\bibitem{MR3313214}
\bysame, \emph{On the strong homotopy associative algebra of a foliation},
  Commun. Contemp. Math. \textbf{17} (2015), no.~2, 1450026, 34. \MR{3313214}

\bibitem{MR3300319}
\bysame, \emph{Representations of homotopy {L}ie-{R}inehart algebras}, Math.
  Proc. Cambridge Philos. Soc. \textbf{158} (2015), no.~1, 155--191.
  \MR{3300319}

\bibitem{MR1202882}
Theodore~T. Voronov, \emph{Geometric integration theory on supermanifolds},
  Soviet Scientific Reviews, Section C: Mathematical Physics Reviews, vol.~9,
  Harwood Academic Publishers, Chur, 1991. \MR{1202882}

\bibitem{MR2163405}
\bysame, \emph{Higher derived brackets and homotopy algebras}, J. Pure Appl.
  Algebra \textbf{202} (2005), no.~1-3, 133--153. \MR{2163405}

\bibitem{MR3588925}
\bysame, \emph{Thick morphisms of supermanifolds and oscillatory integral
  operators}, Uspekhi Mat. Nauk \textbf{71} (2016), no.~4(430), 187--188.
  \MR{3588925}

\bibitem{MR3894641}
\bysame, \emph{Microformal geometry and homotopy algebras}, Tr. Mat. Inst.
  Steklova \textbf{302} (2018), no.~Topologiya i Fizika, 98--142, English
  version published in Proc. Steklov Inst. Math. {{\bf{3}}02} (2018), no. 1,
  88--129. \MR{3894641}

\bibitem{MR947602}
Theodore~T. Voronov and Anton Zorich, \emph{Integration on vector bundles},
  Funktsional. Anal. i Prilozhen. \textbf{22} (1988), no.~2, 14--25, 96.
  \MR{947602}

\bibitem{MR2271128}
Pavol \v{S}evera, \emph{On the origin of the {BV} operator on odd symplectic
  supermanifolds}, Lett. Math. Phys. \textbf{78} (2006), no.~1, 55--59.
  \MR{2271128}

\bibitem{MR1675117}
Ping Xu, \emph{Gerstenhaber algebras and {BV}-algebras in {P}oisson geometry},
  Comm. Math. Phys. \textbf{200} (1999), no.~3, 545--560. \MR{1675117}

\end{thebibliography}

\Addresses

\end{document}